\documentclass[11pt]{amsart}
\usepackage{amssymb}
\usepackage{amsmath,amscd}

\newcommand{\A}{\mathbb{A}_K} 
\newcommand{\bbar}[1]{{\overline{#1}}} 
\newcommand{\C}{\mathbf C} 
\newcommand{\F}{\mathbf F} 
\newcommand{\floor}[1]{{\lfloor{#1}\rfloor}} 
\newcommand{\G}{\mathbf{G}} 
\newcommand{\gotik}[1]{\mathfrak{#1}} 
\newcommand{\legendre}[2]{{\left( \frac{#1}{#2}\right)}} 
\newcommand{\Q}{\mathbf Q} 
\newcommand{\SO}{{\mathbf{SO} }} 
\newcommand{\Tr}{{\mathrm{Tr}\, }} 
\newcommand{\Z}{\mathbf Z}

\renewcommand{\b}[1]{{\mathbf{#1}}} 
\renewcommand{\deg }{{\mathrm{deg\,} }} 
\newcommand{\GL}{{\mathbf{GL} }} 
\newcommand{\J}{{\mathbb J}}

\newcommand{\Pic}{{\mathrm{Pic}\, }} 
\newcommand{\pp}{\mathfrak{p}}

\newtheorem{theorem}{Theorem}[section] 
\newtheorem{lemma}[theorem]{Lemma} 
\newtheorem{corollary}[theorem]{Corollary} 
\newtheorem{proposition}[theorem]{Proposition}

\theoremstyle{definition} 
\newtheorem{definition}{Definition} 
\newtheorem{remark}{Remark}

\numberwithin{equation}{section}

\begin{document}

\title[Siegel's mass formula and averages of $L$-functions]{Siegel's mass formula and averages of Dirichlet 
$L$-functions over function fields}
\author{Piotr Maciak and Jorge Morales}
\address{{\'E}cole Polytechnique F{\'e}d{\'e}rale de Lausanne\\
EPFL-FSB-MATHGEOM-CSAG\\
Station 8, 1015 Lausanne\\
Switzerland}
\email{piotr.maciak@epfl.ch}
\address{Mathematics Department\\
Louisiana State University\\
Baton Rouge, Louisiana}
\email{morales@math.lsu.edu}
\begin{abstract} Let
$\mathfrak{G}$ be a genus of definite ternary lattices over $\F_q[t]$ of square-free determinant. In this paper we give self-contained and relatively elementary proofs 
of Siegel's formulas for the weighted sum of primitive representations numbers over the classes of
$\mathfrak{G}$
and for the mass of $\mathfrak{G}$. Our proof of the mass formula shows an interesting and seemingly new relation 
with certain averages of Dirichlet $L$-functions.
\end{abstract}

\subjclass[2000]{11T55, 11E12, 11E20, 11E41, 11E45}

\maketitle

\section{Introduction} 

Let $\F_q$ be the finite field with $q$ elements, where $q$ is odd. Let
$K=\F_q(t)$ be the field of rational functions over $\F_q$, and $A=\F_q[t]$
the polynomial ring. We denote by $K_\infty$ the completion of $K$ ``at infinity'',
that is $K_\infty=\F_q((t^{-1}))$, the field of Laurent series in $t^{-1}$ over $\F_q$.

A quadratic space $(V,Q)$  over $K$ is {\em definite} if the extended space
$(V\otimes K_\infty, Q)$ is anisotropic, that is if $Q$ it has no nontrivial zeros over $K_\infty$.
If $(V,Q)$ is definite, then the dimension of $V$ is at most four since
the field $K_\infty$ is $C_2$ by a theorem of Lang \cite[Theorem 8]{Lang:1952fk}.

Let $D\in A$ be a square-free polynomial and let $(V,Q)$ be a ternary definite quadratic
space over $K$ that contains
integral lattices of determinant $D$. Let $\mathfrak{G}$ denote the genus of such lattices.
In this paper we give self-contained and relatively 
elementary proofs for the Siegel-Minkowski formulas 
for the weighted sum of primitive representation numbers over the classes in $\mathfrak G$ (see \eqref{E:repr} below)
 and the formula for the mass of $\mathfrak G$ (see \eqref{E:mass1} below) . Both formulas
are obtained directly in a completely explicit form that does not involve 
Euler products of local densities.

Let $L_1,\ldots,L_h$
be representatives of all isometry classes in $\mathfrak G$. Let $n_i=|SO(L_i)|$ for $i=1,\ldots,h$.
  For $a\in A$ we denote by $R(L_i,a)$ the number of {\em primitive} solutions of the equation $Q(\b x)=a$ with
$\b x\in L_i$. We begin by establishing a formula for the weighted sum of the $R(L_i,a)$
\begin{equation}\label{E:repr}
\sum_{i=1}^h \frac{1}{n_i} R(L_i,a)= \frac{1}{2^r} |\Pic(A[\sqrt{- aD}])|,
\end{equation}
where $a$ is prime to the determinant $D$ and such that $-a D$ is not a square in $K_\infty$ and $r$ is the number of irreducible polynomials
dividing $D$ (Theorem \ref{T:siegel}). Here $\Pic(A[\sqrt{- aD}])$ is the Picard group of the order $A[\sqrt{- aD}]$. Formula \eqref{E:repr} is established from the action of a suitable id{\`e}le group 
on a set of lattices of $\mathfrak{G}$ that represent $a$ primitively. Formula \eqref{E:repr}
was already known to Gauss \cite[\S 292]{Gauss:1986vn} for the number of primitive solutions of the equation $x^2+y^2+z^2=a$ over $\Z$. Interpretations of Gauss' formula in
terms of the Hurwitz quaternions have been given by Venkov \cite{Venkov:1922ys} and Rehm \cite{Rehm:1982fk}. Shemanske \cite{Shemanske:1986} generalized
this to the norm form of a quaternion algebra of class number one over $\Q$. Our approach uses a correspondence between lattices and quaternion
orders that goes back to work of Brandt and Latimer (see the thorough exposition by Llorente \cite{Llorente:2000sf} on this topic). 

Recall that the quantity $\sum_{i=1}^h 1/n_i$ is called the {\em mass}
of $\mathfrak G$. We obtain an explicit expression for the mass using an argument that is specific to function fields. We compute the limit of ``averages'' of $R(L_i,a)$ as $a$ varies over a fixed degree
\begin{equation}
\lim_{m\to \infty} \frac{1}{q^{3m}} \sum_{\scriptsize \begin{matrix} \deg a=2m +\delta+1\\ (a,D)=1 \end{matrix} }
R(L_i, a),
\end{equation}
where $\delta=\deg D$. We show that this limit does not depend on the representative $L_i$ of the genus. Denoting
this limit by $\ell$ and using \eqref{E:repr} we get
\[
\sum_{i=1}^h \frac{1}{n_i} = \frac{1}{2^r \ell} \lim_{m\to \infty} \frac{1}{q^{3m}} \sum_{\scriptsize \begin{matrix} \deg a=2m+\delta+1 \\ (a,D)=1 \end{matrix} } |\Pic(A[\sqrt{- aD}])|.
\]
The evaluation of the limit on the right-hand side reveals an interesting connection with work by Hoffstein and Rosen \cite{Hoffstein:1992ao} on averages of $L$ functions over function fields (see Theorem \ref{T:L_avg} and Remark \ref{R:L_avg}). The computation of this limit in our situation requires the use of the ``Riemann Hypothesis'' for curves over finite fields, which was first proved by A. Weil \cite{Weil:1948uq}.  Elementary proofs have since then  been offered by Stepanov \cite{Stepanov:1972uq} (for hyperelliptic curves, the only case needed here), Schmidt \cite{Schmidt:1973vn}, Bombieri \cite{Bombieri:1974ys}. See \cite[Appendix]{Rosen:2002kv} for an exposition of Bombieri's proof  and interesting historical remarks.

In order to state the final formula for the mass we introduce the function 
\[
M_e(s)=\sum_{\scriptsize \begin{matrix} d|e\\ d\ \mathrm{monic} \end{matrix} }\mu(d) |d|^{-s},
\]
where $\mu$ is the M{\"o}bius function and $e\in A\setminus \{0\}$.

With this notation, the formula for the mass that we obtain is

\begin{equation}\label{E:mass1}  
\sum_{i=1}^h \frac{1}{n_i}= \frac{q^{\delta} M_D(1) M_{D_0}(2)}{2^r(q^2-1)(2M_{D_0}(1) -M_{D_0}(2))},
\end{equation}
where $D$ is the determinant and $\delta$ is its degree. The polynomial $D_0$ is the product of prime divisors of $D$ at which
$Q$ is isotropic, and $D_1$ is the product of prime divisors of $D$ at which
$Q$ is anisotropic, and $r$ is the total number of prime divisors of $D$ (Theorem \ref{T:mass}).

In the last section we apply \eqref{E:mass1} to obtain an exact formula for the class number $h$ in the case 
where $D$ is irreducible (Theorem \ref{T:exactclassno}).

\section{Preliminaries and notation}

The following notation will be used throughout this paper:
\medskip

\begin{tabular}{ l c l}
 $\F_q$ & : & the finite field with $q$ elements, where $q$ is odd \\
 $A$ & : & the polynomial ring $\mathbb{F}_q[t]$\\
 $K$ & : & the field of fractions of $A$\\
 $\mathbb{A}_K$ & : & the ad{\`e}le ring of $K$\\
 $(V,Q)$ & : & a regular quadratic space over $K$\\
   $C(V,Q)$ & : & the Clifford algebra  of $(V,Q)$ \\
 $C_0(V,Q)$ & : &  the even Clifford algebra  of $(V,Q)$
\end{tabular}

\medskip

Recall that a quadratic form $Q$ over $K$ is {\em definite} if
it is anisotropic (i.e. does not have nontrivial zeros) over $K_\infty=\F_q((t^{-1}))$. Notice
that definite forms exist only in rank $\le 4$.

In this paper, we shall limit ourselves to the case $\dim_KV = 3$ and
all quadratic forms are assumed to be definite. This implies in particular
that $C_0(V,Q)$ is a quaternion algebra over $K$ ramified at $\pp=\infty$.

If $\pp$ is a prime of $A$ then $A_\pp$, $K_\pp$ will denote
$\pp$-adic completions of $A$, $K$ respectively. Similar notation will be used to denote localizations of any
considered modules.

If $L$ is an $A$-lattice in $V$, the \emph{determinant} $\det(L)$ of $L$ is defined as
\[
 \det (B(\b e_i,\b e_j))  \in A / (\F^{\times}_q)^2,
\]
where $B(\b x,\b y)=\frac{1}{2}(Q(\b x+\b y)-Q(\b x)-Q(\b y))$ and  $(\b e_1, \dots, \b e_n)$ is a basis of $L$ over $A$.

A lattice $L\subset V$ is called {\em maximal} if it is maximal for the property
that $B(L,L)\subset A$.  

Recall that two lattices $L$ and $L'$ of $V$ are in the {\em same genus} if their completions $L_\pp$ 
and $L'_\pp$ are isometric for all primes $\pp$ of $K$.

\begin{lemma} All maximal lattices in $V$ are in the same genus.

\end{lemma}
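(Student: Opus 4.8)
The plan is to reduce the assertion to a purely local statement, using the fact that being in the same genus is by definition a condition at each prime. First I would record that maximality is itself a local property: an integral lattice $L$ (one with $B(L,L)\subseteq A$) is maximal if and only if each completion $L_\pp$ is a maximal integral $A_\pp$-lattice in $V_\pp=V\otimes K_\pp$. The point is that an integral overlattice of $L$ can be detected prime by prime and reassembled from local overlattices that coincide with $L$ at all but finitely many primes. Granting this, if $L$ and $L'$ are two maximal lattices in $V$, it suffices to prove that $L_\pp$ and $L'_\pp$ are isometric for every finite prime $\pp$ of $A$.

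So the heart of the matter is the local uniqueness statement: for each finite prime $\pp$, any two maximal integral lattices in the fixed quadratic space $V_\pp$ are isometric. Here I would exploit that $q$ is odd, so every $\pp$ is non-dyadic and every lattice in $V_\pp$ admits an orthogonal Jordan splitting into $\pp$-modular components. The maximality condition pins down the admissible scales: if some Jordan component had scale $\pp^k$ with $k\ge 2$, rescaling it by $\pp^{-1}$ would produce a strictly larger integral lattice, so a maximal lattice satisfies $\pp L^\#\subseteq L\subseteq L^\#$ and its only components have scale $\pp^0$ or $\pp^1$. The ranks and determinants of these components are then forced by the invariants of $V_\pp$ alone, which are the same for $L$ and $L'$ since both live in $V$.

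With the Jordan type fixed, I would finish by invoking the classification of modular lattices over a non-dyadic local ring: such a lattice is determined up to isometry by its rank and its determinant modulo squares (the residue characteristic being odd). Since these data are forced by $V_\pp$, the corresponding components of $L_\pp$ and $L'_\pp$ are isometric, and assembling them yields an isometry $L_\pp\simeq L'_\pp$. The square-free hypothesis on $D$ keeps the bookkeeping clean: at $\pp\nmid D$ both lattices are unimodular of rank $3$, whereas at $\pp\mid D$ the relation $\pp\,\|\,\det$ forces a unimodular rank-$2$ block together with a single block of scale $\pp$.

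I expect the main obstacle to be precisely the local uniqueness of maximal lattices, that is, the rigidity of the Jordan type: proving that maximality really does confine all scales to $\{0,1\}$ and that, together with the invariants of $V_\pp$, this determines each modular block up to isometry. The reduction steps (genus is local, maximality is local) are formal, and the final assembly is routine once the scales and determinants of the blocks are known; everything else rests on having the structure theory of lattices over non-dyadic local rings at hand.
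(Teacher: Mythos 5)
Your overall strategy---reduce to a local statement and then prove that any two maximal lattices in $V_\pp$ are isometric---is exactly the route behind the result the paper simply cites (O'Meara, Theorem 91:2), and the formal steps you call formal really are: genus is local by definition, and maximality is a local property. The genuine gap is in your key rigidity claim that ``the ranks and determinants of these components are then forced by the invariants of $V_\pp$ alone.'' Confinement of the Jordan scales to $\{0,1\}$ together with the isometry class of $V_\pp$ does \emph{not} determine the Jordan type. Concretely, let $\pi$ be a uniformizer at $\pp$ and $u$ a unit: in the isotropic ternary space $V_\pp\cong\langle 1,-1,u\rangle$, both the unimodular lattice $\langle 1,-1,u\rangle$ and the lattice $\langle u\rangle\perp\pi\langle 1,-1\rangle$ (a unimodular line plus a $\pp$-modular hyperbolic plane) are integral with all scales in $\{0,1\}$, yet they have different Jordan types and determinants of different $\pp$-valuation. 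Your argument, which after the scale confinement appeals only to the invariants of $V_\pp$, cannot distinguish these and would wrongly conclude that their types coincide. What is missing is a second use of maximality: one must show that the $\pp$-modular component $L_1$ of a maximal lattice has \emph{anisotropic} scaled residue form over $A/\pp$. Indeed, if the scaled reduction were isotropic, then (char.\ odd, form nondegenerate) Hensel's lemma produces a primitive $e\in L_1$ with $Q(e)=0$ and $B(e,L_\pp)\subseteq\pp$, so that $L_\pp+A_\pp\,\pi^{-1}e$ is a strictly larger integral lattice; this is precisely what disqualifies the $\pp$-modular hyperbolic block in the example above. Only with this extra constraint, which bounds the rank of $L_1$ by $2$ and ties $L_1\otimes K_\pp$ to the anisotropic kernel of $V_\pp$, does the Jordan type become an invariant of the space, after which your appeal to the non-dyadic classification of modular lattices by rank and determinant does finish the proof.

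A smaller but related circularity: at $\pp\mid D$ you invoke the relation ``$\pp$ exactly divides $\det$'' for \emph{both} lattices, but the statement that all maximal lattices share the same (square-free) determinant is not available in advance---it is a consequence of the local uniqueness you are trying to establish (or of a standing assumption the paper introduces only later), so it cannot serve as input here. You do flag the local rigidity as ``the main obstacle,'' which is the correct diagnosis; but as written, the proposal asserts exactly the step that fails without the residual-anisotropy argument, so the proof is incomplete at its crucial point.
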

\begin{proof} This follows from \cite[Theorem 91:2]{OMeara:2000fk}.
\end{proof}

\begin{lemma}\label{L:eqconditions} Let $(V,Q)$ be a ternary definite quadratic space over $K$
and let $L\subset V$ be an integral lattice of square-free determinant $D$. 
Let $a\in A$ be a polynomial 
relatively prime to $D$. The following conditions are equivalent

\begin{enumerate}
\item\label{list:1} The equation $Q(\b x)=a$ has a solution with $\b x\in V$.
\item\label{list:2} The equation $Q(\b x)=a$ has a solution with $\b x\in V_\infty$.
\item\label{list:3}The equation $Q(\b x)=a$ has a primitive solution  $\b x\in L_\pp$ for all primes $\pp$ of $K$ (including $\pp=\infty$).
\item\label{list:4} The equation $Q(\b x)=a$ has a primitive solution $\b x\in M$, where  $M$ is some lattice in the genus of $L$.
\item\label{list:5} $-a D$ is not a square in $K_\infty$.
\item\label{list:6} The extension $K(\sqrt{-a D})/K$ does not split at $\pp=\infty$ (i.e.
is ``imaginary'').
\end{enumerate}

\end{lemma}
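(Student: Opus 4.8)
The plan is to prove the equivalences by establishing a cycle of implications together with a few direct links, relying on the local-global philosophy for representations by genera. The six conditions naturally split into two groups: conditions (i), (v), (vi) are statements about the \emph{global} or \emph{infinite-place} behavior of the form, while (ii), (iii), (iv) concern local or genus-wise primitive representability. The cleanest route is to first settle the purely local equivalences $(\ref{list:5})\Leftrightarrow(\ref{list:6})\Leftrightarrow(\ref{list:2})$ at the place $\infty$, then propagate primitive representability to all finite primes, and finally invoke a Hasse-type principle to return to the global statement (i) and to the genus statement (iv).

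Let me sketch the steps. First I would observe that $(\ref{list:5})\Leftrightarrow(\ref{list:6})$ is essentially a tautology: $K(\sqrt{-aD})/K$ is split (``real'') at $\infty$ precisely when $-aD$ is a square in $K_\infty$, so the ``imaginary'' condition is literally the non-squareness condition. Next, for $(\ref{list:2})\Leftrightarrow(\ref{list:5})$, I would use that $(V,Q)$ is ternary and definite, so $(V\otimes K_\infty, Q)$ is anisotropic of rank $3$; over the local field $K_\infty$ an anisotropic ternary form represents exactly those $a$ for which the quaternary form $Q \perp \langle -a\rangle$ is isotropic, and by the theory of quadratic forms over local fields this is controlled by the Hasse invariant and the discriminant. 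Since $\det L = D$ up to squares and the discriminant of $Q \perp \langle -a \rangle$ pairs $a$ against $D$, the representability of $a$ at $\infty$ should reduce precisely to the condition that $-aD$ fail to be a square in $K_\infty$, giving $(\ref{list:2})\Leftrightarrow(\ref{list:5})$. I would carry out this local computation explicitly using a diagonalization of $Q$ over $K_\infty$.

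For the finite primes, the key input is that $a$ is \emph{prime to} $D$ and $D$ is square-free, so $L_\pp$ is unimodular at every prime $\pp \nmid D$ and the relevant local structure at $\pp \mid D$ is rank-one degenerate in a controlled way. At a prime $\pp$ where $L_\pp$ is unimodular, a unit value $a$ (a unit mod $\pp$) is automatically \emph{primitively} represented by a unimodular ternary lattice over the local ring $A_\pp$; this is a standard local representation fact for $p$-adic (here $\pp$-adic) unimodular forms, and it handles $(\ref{list:5})\Rightarrow(\ref{list:3})$ at all finite primes once the infinite place is dealt with. Combined with the infinite place computation, this gives that $(\ref{list:5})$ forces primitive local solutions everywhere, i.e. $(\ref{list:3})$; the converse $(\ref{list:3})\Rightarrow(\ref{list:2})$ is immediate since a primitive solution in $L_\infty$ is in particular a solution in $V_\infty$. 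Finally, to close the loop with the global and genus statements, I would invoke the strong local-global principle for \emph{primitive} representations by a genus: condition (iii) (primitive representability everywhere locally) is equivalent to (iv) (primitive representability by \emph{some} lattice in the genus), which is precisely the content of the genus being the set of lattices locally isometric to $L$; and $(\ref{list:4})\Rightarrow(\ref{list:1})$ is trivial while $(\ref{list:1})\Rightarrow(\ref{list:2})$ is the restriction to $K_\infty$.

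\textbf{The main obstacle} I anticipate is the local representation computation at $\infty$ making the discriminant/Hasse-invariant bookkeeping align exactly so that the representability criterion collapses to the single clean condition ``$-aD$ is not a square.'' One must be careful that the sign conventions in $\det L = D \bmod (\F_q^\times)^2$ and the choice of $-aD$ (rather than $+aD$) come out consistent with the anisotropy of the definite form, and that the hypothesis $(a,D)=1$ is genuinely used to rule out the degenerate primes from interfering. The local-global step for primitive representations is standard but must be quoted in the primitive (rather than merely integral) form, so I would be careful to cite the appropriate version from the theory of genera (e.g.\ the relevant results in O'Meara), since ordinary representability by a genus does not automatically upgrade to \emph{primitive} representability without the coprimality hypothesis on $a$.
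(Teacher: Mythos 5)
Your overall architecture is sound and is, in fact, essentially the paper's: the equivalence (\ref{list:5})$\iff$(\ref{list:6}) is a tautology, (\ref{list:2})$\iff$(\ref{list:5}) is the Hasse-invariant/discriminant computation for $Q\perp\langle -a\rangle$ over $K_\infty$ (the paper does exactly this via \cite[Proposition 4.24]{Gerstein:2008fk}), and (\ref{list:3})$\iff$(\ref{list:4}) is a genuine theorem of genus theory that must be cited (the paper uses \cite[Ch.~9, Theorem~5.1]{Cassels:1978rf}); your routing of (\ref{list:1}) through (\ref{list:4})$\Rightarrow$(\ref{list:1}) instead of a separate appeal to Hasse--Minkowski for (\ref{list:3})$\Rightarrow$(\ref{list:1}) is an inessential variant, and your cycle of implications does cover all six conditions.

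There is, however, a genuine gap in your treatment of the finite primes: you apply the hypothesis $(a,D)=1$ at the wrong set of primes. Coprimality makes $a$ a unit precisely at the primes $\pp\mid D$ --- which are exactly the primes where $L_\pp$ is \emph{not} unimodular --- and it says nothing at the primes $\pp\nmid D$, where $L_\pp$ \emph{is} unimodular but $a$ may be divisible by an arbitrary power of $\pp$. Consequently the single local fact you quote (``a unit value is primitively represented by a unimodular ternary lattice'') covers neither of the two cases that actually occur: at $\pp\nmid D$ with $\pp\mid a$ the value is not a unit, and at $\pp\mid D$ the lattice is not unimodular (you allude to its ``rank-one degenerate'' structure but never give an argument there). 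What is needed, and what the paper proves, are two different facts. First, for $\pp\nmid D$: a unimodular ternary lattice over the non-dyadic ring $A_\pp$ is isotropic (Chevalley--Warning over the residue field plus Hensel), hence splits an orthogonal factor $\langle 1,-1\rangle$, and a hyperbolic plane represents \emph{every} element of $A_\pp$ primitively --- this handles non-unit values of $a$. Second, for $\pp\mid D$: since $D$ is square-free, $L_\pp$ has a unimodular binary orthogonal factor; a nondegenerate binary form over a finite field represents all nonzero elements, so by Hensel this factor represents every unit of $A_\pp$, and a representation of a unit by an integral lattice is automatically primitive; it is here, and only here, that $(a,D)=1$ is used. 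With these two statements in place of your single quoted one, your plan goes through.
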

\begin{proof} (\ref{list:1})$\implies$ (\ref{list:2}) is trivial.
\medskip

(\ref{list:2}) $\implies$ (\ref{list:3}). Let $\pp$ be a finite prime. If $\pp\nmid D$ then $(L_\pp,Q)$ has a binary hyperbolic
orthogonal factor $\langle 1, -1\rangle$ and hence $(L_\pp,Q)$ represents primitively any
element of $A_\pp$. If $\pp\mid D$ then $(L_\pp,Q)$ has a binary unimodular orthogonal factor
and hence represents all $\pp$-units, in particular $a$. 
\medskip

(\ref{list:3}) $\implies$ (\ref{list:1}) follows from the Hasse-Minkowski Theorem.
\medskip

(\ref{list:3}) $\iff$ (\ref{list:4}) follows from \cite[Ch. 9, Theorem 5.1]{Cassels:1978rf}.
\medskip

(\ref{list:5}) $\iff$ (\ref{list:2}). It is clear that $a$ is represented by $V_\infty$ if and only if $F=Q\perp \langle -a\rangle$ 
is isotropic over $K_\infty$. By \cite[Proposition 4.24] {Gerstein:2008fk} this condition holds if and only if
$- aD\not\in {K_\infty^\times}^2$ or $S_\infty (F)=1$, where $S_\infty$ denotes the Hasse 
invariant at $\infty$. Since $Q$ is anisotropic over $K_\infty$ we have $S_\infty(Q)=-(-1,D)_\infty$
(see e.g. \cite[Proposition 4.21] {Gerstein:2008fk}). Hence $S_\infty (F)=S_\infty(Q)(D,-a)_\infty =-(D,a)_\infty$. If $-a D\in {K_\infty^\times}^2$ then
$S_\infty (F)=-1$, which would imply that $a$ is not represented by $V_\infty$.
\medskip

(\ref{list:5}) $\iff$ (\ref{list:6}) is trivial.
\end{proof}

\section{Correspondence between lattices and orders}

We assume throughout this section that $(V,Q)$ is a ternary definite quadratic space over $K$.

Recall that the Clifford algebra $C(V,Q)$ is the quotient of the tensor algebra $T(V)$ by
the two-sided ideal generated by the set $\{\b x\otimes\b x-Q(\b x) \: : \: \b x\in V\}$. We refer 
to \cite[Chapter 9, \S2]{Scharlau:1985jv} or \cite[Chapter 10, \S2]{Cassels:1978rf} for the general properties of this construction.

If $L\subset V$ is an integral $A$-lattice we denote by $C(L)$ the Clifford algebra of $L$, which we view as
the $A$-order of $C(V,Q)$ generated by $L$. Its center
$Z(C(L))$ is a quadratic order over $A$, hence of the form
$A[\delta_L]$, where $\delta_L^2\in A$. Notice that $\delta_L$ 
is defined only up to a multiplicative constant. It is easy to see that 
$\delta_L^2 =-\det L$ (up to squares of $\F_q$).

\begin{lemma} Let $L,M\subset V$ be integral lattices with the same determinant.
Then $\delta_L=\delta_M u$, where $u\in \F_q^\times$.
\begin{proof}  $\delta_L$ and $\delta_M$ generate the quadratic extension 
$Z(C(V,Q))/K$ so $\delta_L=\delta_M u$ with $u\in K^\times$. Since $\delta_M^2=-\det M\equiv
-\det L=\delta_L^2 \pmod {\F_q^\times}^2$, we have $u\in \F_q^\times$.

\end{proof}

\end{lemma}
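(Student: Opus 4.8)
The plan is to work inside the center $Z(C(V,Q))$ and to exploit the fact that both $\delta_L$ and $\delta_M$ are ``square roots of elements of $K$'' there. First I would recall that for a ternary space $Z(C(V,Q))$ is a commutative $K$-algebra of rank two, and that here it is a quadratic field extension of $K$: since $Q$ is definite, $-\det$ is a non-square already in $K_\infty$ and hence in $K$, so $Z(C(V,Q)) = K(\delta_L)$ is a genuine field. Because $C(L)$ and $C(M)$ are both $A$-orders in the one algebra $C(V,Q)$, their centers $A[\delta_L]$ and $A[\delta_M]$ lie inside this single quadratic extension $Z(C(V,Q))/K$.

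The key elementary observation is that in a quadratic field extension $K(\sqrt d)/K$ the elements whose square lies in $K$ are exactly those in $K \cup K\sqrt d$: writing $z = a + b\sqrt d$ with $a,b\in K$ gives $z^2 = (a^2 + b^2 d) + 2ab\sqrt d$, so $z^2 \in K$ forces $ab = 0$, i.e. $z\in K$ or $z\in K\sqrt d$. Now $\delta_L^2 = -\det L \in A$ and $\delta_M^2 = -\det M \in A$, while neither $\delta_L$ nor $\delta_M$ lies in $K$ (each generates the extension). Hence both lie in the single nontrivial coset $K\sqrt d$, and therefore $\delta_L = u\,\delta_M$ for some $u \in K^\times$. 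I expect this proportionality step to be the main (if modest) obstacle, since it is where one actually has to pin down the structure of the extension rather than just manipulate squares.

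Squaring the relation gives $-\det L = u^2(-\det M)$, that is $\det L = u^2 \det M$ as honest elements of $A$ once representatives of $\delta_L,\delta_M$ are fixed. By hypothesis $L$ and $M$ have the same determinant in $A/(\F_q^\times)^2$, so $\det L = c^2 \det M$ for some $c \in \F_q^\times$. Since $Q$ is regular we have $\det M \neq 0$, and cancelling it yields $u^2 = c^2$. Finally the identity $(u - c)(u + c) = 0$ in the field $K$ forces $u = \pm c$, so $u \in \F_q^\times$, which is exactly the claim.
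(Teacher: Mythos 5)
Your argument is, at its core, the same as the paper's: both proofs first obtain the proportionality $\delta_L = u\,\delta_M$ with $u\in K^\times$ from the structure of the center $Z(C(V,Q))$, and then square and compare determinants modulo $(\F_q^\times)^2$ to force $u\in\F_q^\times$. In fact you are more careful than the paper on the first step: the paper only remarks that $\delta_L$ and $\delta_M$ ``generate the quadratic extension,'' which by itself does not give proportionality (e.g.\ $1+\sqrt{d}$ is also a generator); your observation that an element of $K(\sqrt{d})$ whose square lies in $K$ must lie in $K\cup K\sqrt{d}$ is exactly the missing justification, and your endgame ($u^2=c^2$, hence $u=\pm c$) is the computation the paper leaves implicit.

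There is, however, one false assertion in your setup: definiteness does \emph{not} imply that $-\det Q$ is a nonsquare in $K_\infty$, nor in $K$. For a counterexample, let $(a,b)$ be a quaternion algebra over $K$ that is division at $\infty$; the ternary form $\langle a,b,-ab\rangle$ (the pure part of its norm form scaled by $-1$) is anisotropic over $K_\infty$, hence definite, yet its determinant is $-(ab)^2\equiv -1$ modulo squares, so $-\det Q\equiv 1$ is a square. For such a space $Z(C(V,Q))\cong K\times K$ is not a field. In the paper's actual setting the center is a field for a different reason: the lattices under consideration have square-free, non-constant determinant $D$, and $-D$ cannot then be a square in $K$. (Alternatively, your argument survives without field-ness: in $K\times K$ the elements whose square lies in the diagonal copy of $K$ are exactly the diagonal and the antidiagonal ones, and since $\delta_L$, $\delta_M$ generate rank-two orders they are both antidiagonal, hence again proportional over $K$.) So your proof is correct once this one justification is repaired, but as written the reason you give for $Z(C(V,Q))$ being a field is wrong.
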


We denote by $C_0(V,Q)$ the even Clifford algebra of $(V,Q)$ (see \cite[Chapter 9, \S2] {Scharlau:1985jv} for the definition).
The algebra $C_0(V,Q)$ comes naturally equipped with the trace form $\langle x, y\rangle=\frac{1}{2}\Tr (x\bar y)$,
where $\Tr $ is the reduced trace and $\:\bar\ \:$ is the canonical involution on $C_0(V,Q)$. The associated
quadratic form is the norm form of $C_0(V,Q)$ and will be denoted by $N$. 
If $L\subset V$
is an integral lattice, we denote by $C_0(L)$ its even Clifford algebra, which is an $A$-order in $C_0(V,Q)$.
 An order in $C_0(V,Q)$ is in particular an $A$-lattice,
so it makes sense to speak about its determinant with respect to $\langle\ ,\ \rangle$.

 \begin{proposition}\label{P:det}
 Let $L$ be an integral $A$-lattice in $(V,Q)$. Then
 \[
  \det (C_0(L)) = \det(L)^2.
 \]
\end{proposition}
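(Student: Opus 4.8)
The plan is to compute the trace-form lattice $(C_0(L),\langle\ ,\ \rangle)$ explicitly, rather than through local densities, by producing a convenient $A$-basis and identifying the quadratic form directly. Since the characteristic is odd, the Clifford filtration splits and the even order $C_0(L)$ is free over $A$: for any $A$-basis $e_1,e_2,e_3$ of $L$ the four elements $\{1,e_1e_2,e_1e_3,e_2e_3\}$ form an $A$-basis (their $A$-linear independence is inherited from the $K$-basis of the quaternion algebra $C_0(V,Q)$). Intrinsically this is the module isomorphism $C_0(L)\cong A\oplus\wedge^2 L$, with the rank-three part $W$ spanned by the antisymmetrized products. First I would split off $A\cdot 1$ orthogonally: the canonical involution satisfies $\overline{e_ie_j}=e_je_i=-e_ie_j+2B(e_i,e_j)$, so the reduced trace of each $e_ie_j$ equals $2B(e_i,e_j)$; setting $u_{ij}=e_ie_j-B(e_i,e_j)$ gives $\bar u_{ij}=-u_{ij}$, hence $\langle 1,u_{ij}\rangle=\tfrac12\Tr(\bar u_{ij})=0$, while $\langle 1,1\rangle=\tfrac12\Tr(1)=1$.

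The heart of the argument is to match the restriction of $N$ to $W=\langle u_{12},u_{13},u_{23}\rangle_A$ with the second exterior power $\wedge^2 B$. A direct computation in $C_0(V,Q)$, using $e_ie_j+e_je_i=2B(e_i,e_j)$ and $e_i^2=Q(e_i)$, yields $N(u_{ij})=u_{ij}\bar u_{ij}=Q(e_i)Q(e_j)-B(e_i,e_j)^2$, which is precisely the $2\times 2$ Gram minor attached to the pair $e_i,e_j$. Consequently the correspondence $u_{ij}\mapsto e_i\wedge e_j$ is an isometry $(W,N)\xrightarrow{\sim}(\wedge^2 L,\wedge^2 B)$, where $\wedge^2 B$ is the form with $(\wedge^2 B)(x_1\wedge x_2,\,y_1\wedge y_2)=\det[B(x_i,y_j)]$.

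Finally, since the determinant is multiplicative over an orthogonal direct sum, $\det(C_0(L))=\langle 1,1\rangle\cdot\det(\wedge^2 L)=\det(\wedge^2 L)$. The Gram matrix of $\wedge^2 B$ in the basis $(e_i\wedge e_j)$ is the second compound of the Gram matrix $G$ of $L$, and the Sylvester--Franke identity gives $\det(\wedge^2 G)=(\det G)^{\binom{2}{1}}=(\det L)^2$ for a rank-three lattice. This produces the equality $\det(C_0(L))=\det(L)^2$ directly over $A$, so it holds in $A/(\F_q^\times)^2$ with no square-class ambiguity, the right-hand side being manifestly a square.

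The step I expect to demand the most care is the identification of $(W,N)$ with $\wedge^2 B$: the scalar correction $-B(e_i,e_j)$ in $u_{ij}$ is indispensable, since the raw product $e_ie_j$ has norm $Q(e_i)Q(e_j)$ and misses the minor's cross term, so one must antisymmetrize before comparing with the exterior power. A more computational alternative sidesteps $\wedge^2 L$ by choosing an orthogonal basis of $L$, which exists after localizing at each prime $\pp$ because $A_\pp$ is a discrete valuation ring in which $2$ is a unit; one then reads off the diagonal Gram matrix $\mathrm{diag}(1,a_1a_2,a_1a_3,a_2a_3)$ of $C_0(L_\pp)$ and reassembles the local determinants, reducing the whole proposition to the single line $\det(C_0(L))=(a_1a_2a_3)^2=\det(L)^2$.
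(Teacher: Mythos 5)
Your main argument is correct in substance, but it takes a genuinely different route from the paper; in fact the paper's proof is exactly the ``computational alternative'' you sketch in your last paragraph. The paper localizes at each prime $\pp$, picks an orthogonal basis of $L_\pp$ over $A_\pp$ (possible since $2$ is a unit), observes that $\{1, {\b e}_1{\b e}_2, {\b e}_1{\b e}_3, {\b e}_2{\b e}_3\}$ is then an orthogonal basis of $C_0(L_\pp)$ with Gram matrix $\mathrm{diag}(1, a_1a_2, a_1a_3, a_2a_3)$, computes $\det C_0(L_\pp)=(a_1a_2a_3)^2=\det(L_\pp)^2$, and lets the global statement follow from the local ones. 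Your global route --- splitting $C_0(L)=A\cdot 1\perp W$ with $W$ spanned by the antisymmetrized elements $u_{ij}$, identifying $(W,N)$ with $(\wedge^2 L,\wedge^2 B)$, and invoking the Sylvester--Franke identity --- buys two things the local proof does not: it needs no orthogonal basis and no localization, and it yields the equality $\det C_0(L)=(\det L)^2$ on the nose in $A$ (for bases matched to one another), together with the stronger structural statement $C_0(L)\simeq \langle 1\rangle \perp (\wedge^2 L,\wedge^2 B)$ as lattices, rather than an identity in $A/(\F_q^\times)^2$ assembled prime by prime. The price is one verification you have not actually carried out: the word ``consequently'' in your isometry claim is too quick, since knowing the diagonal values $N(u_{ij})$ does not determine an isometry; you must also match the off-diagonal Gram entries. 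This is the same one-line computation as the diagonal case --- for pairs sharing an index (the only case in rank three) one finds $\langle u_{ij},u_{ik}\rangle = Q({\b e}_i)B({\b e}_j,{\b e}_k)-B({\b e}_i,{\b e}_j)B({\b e}_i,{\b e}_k)$, which is precisely the corresponding entry of the second compound of the Gram matrix of $L$; alternatively, note that every element of $\wedge^2$ of a three-dimensional $K$-space is decomposable, so your quadratic identity on decomposables polarizes to the full bilinear identity. With that step filled in, your proof is complete and slightly sharper than the paper's.
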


\begin{proof}
 Let $\pp$ be a prime of $A$ and let $\{{\b e}_1, {\b e}_2, {\b e}_3\}$ be an orthogonal basis of $L_\pp$. It is
 easy to check that $\{1, {\b e}_1{\b e}_2, {\b e}_1{\b e}_3,
 {\b e}_2{\b e}_3\}$ is an orthogonal basis of $C_0(L_\pp)$. Then
 
 \[
 \begin{aligned}
 \det (C_0(L_\pp))&=N({\b e}_1{\b e}_2)N({\b e}_1{\b e}_3)N({\b e}_2{\b e}_3)\\
 &= {\b e}_1^2{\b e}_2^2 {\b e}_1^2{\b e}_3^2 {\b e}_2^2{\b e}_3^2\\
 &= Q({\b e}_1)^2Q({\b e}_2)^2Q({\b e}_3)^2\\
 &=\det(L_\pp)^2.
 \end{aligned}
 \]
 The result follows.
\end{proof}

\begin{lemma}\label{L:lattice recovery} Let $L^\sharp \subset V$ be
the lattice dual to $L$. Then
\[
\delta_L L^{\sharp} = {\Lambda_0},
\]
where ${\Lambda_0}=\{\, x \in
C_0(L)\: : \:\Tr (x)=0 \,\}$.
\end{lemma}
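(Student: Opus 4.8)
The plan is to reduce the asserted equality of $A$-lattices to a local computation at each prime $\pp$ of $A$ and then to multiply things out in the Clifford algebra using a diagonalizing basis. Both $\delta_L L^\sharp$ and $\Lambda_0$ are $A$-lattices sitting inside the trace-zero subspace of $C_0(V,Q)$ (left multiplication by the invertible element $\delta_L$ carries $V$ onto this three-dimensional subspace), so by the local--global principle for lattices it suffices to prove $\delta_L L^\sharp_\pp = \Lambda_{0,\pp}$ for every $\pp$. Fixing such a $\pp$, since $A_\pp$ is a discrete valuation ring in which $2$ is invertible the form $Q$ diagonalizes on $L_\pp$; I would pick an orthogonal basis $\b e_1, \b e_2, \b e_3$ of $L_\pp$ and set $a_i = Q(\b e_i) = \b e_i^2$. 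As in the proof of Proposition \ref{P:det}, $\{1, \b e_1\b e_2, \b e_1\b e_3, \b e_2\b e_3\}$ is then a basis of $C_0(L_\pp)$; because $\overline{\b e_i\b e_j} = \b e_j\b e_i = -\b e_i\b e_j$ we have $\Tr(\b e_i\b e_j) = 0$ while $\Tr(1) = 2$, so the trace-zero sublattice is $\Lambda_{0,\pp} = A_\pp\b e_1\b e_2 \oplus A_\pp\b e_1\b e_3 \oplus A_\pp\b e_2\b e_3$.

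Next I would identify $\delta_L$ locally with the pseudoscalar $\omega = \b e_1\b e_2\b e_3$. The relations $\b e_i\b e_j = -\b e_j\b e_i$ and $\b e_i^2 = a_i$ give $\omega^2 = -a_1a_2a_3 = -\det L_\pp$, and $\omega$ is central with the center of $C(L_\pp)$ equal to $A_\pp \oplus A_\pp\omega$. Writing $\delta_L = \alpha + \beta\omega$ with $\alpha, \beta \in A_\pp$, the requirement $\delta_L^2 \in A_\pp$ forces $\alpha = 0$ (as $2$ is a unit), and then $\beta^2\omega^2 = \delta_L^2 = -\det L$ shows $\beta$ is a unit; hence $\delta_L$ equals $\omega$ up to a unit and one may replace it by $\omega$ without changing the lattice $\delta_L L^\sharp_\pp$. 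Since $B(\b e_i, \b e_j) = a_i\delta_{ij}$, the dual basis is $\b e_i/a_i$, so $L^\sharp_\pp = A_\pp(\b e_1/a_1) \oplus A_\pp(\b e_2/a_2) \oplus A_\pp(\b e_3/a_3)$. A direct simplification then gives
\[
\omega\,\frac{\b e_1}{a_1} = \b e_2\b e_3,\qquad \omega\,\frac{\b e_2}{a_2} = -\b e_1\b e_3,\qquad \omega\,\frac{\b e_3}{a_3} = \b e_1\b e_2,
\]
so $\delta_L L^\sharp_\pp$ is spanned over $A_\pp$ by $\b e_2\b e_3, \b e_1\b e_3, \b e_1\b e_2$, which is exactly $\Lambda_{0,\pp}$. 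As $\pp$ was arbitrary, the global equality follows.

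The manipulations in the last display are routine applications of the anticommutation relations, so I expect the only delicate point to be the passage from the globally defined central generator $\delta_L$ to the local pseudoscalar $\omega$: one must verify that at every prime $\delta_L$ lies in $A_\pp\omega$ with a unit coefficient, which is precisely what lets multiplication by $\delta_L$ and by $\omega$ define the same $A_\pp$-lattice. This is where the hypothesis that $\delta_L$ is a generator of the quadratic center with $\delta_L^2 \in A$ enters, and once it is in place the rest is bookkeeping.
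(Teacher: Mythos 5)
Your proof is correct and follows essentially the same route as the paper's: reduce to a local verification at each prime, take an orthogonal basis $\b e_1,\b e_2,\b e_3$ of $L_\pp$, and compute $\delta_L\,\b e_i^\sharp$ explicitly against the basis $\{\b e_1\b e_2,\b e_1\b e_3,\b e_2\b e_3\}$ of $(\Lambda_0)_\pp$. The only difference is that you carefully justify the step the paper asserts without proof --- that locally $\delta_L = u\,\b e_1\b e_2\b e_3$ with $u\in A_\pp^\times$, via the decomposition $\delta_L=\alpha+\beta\omega$ in the center of $C(L_\pp)$ --- which is a welcome addition rather than a change of method.
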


\begin{proof} It is enough to prove the equality $\delta L^{\sharp}={\Lambda_0}$ locally
at all primes.

Let $\pp$ be a prime of $A$ and let ${\b e}_1, {\b e}_2, {\b e}_3$ be an orthogonal basis
of $L_\pp$. Then
$\{{\b e}_1{\b e}_2, {\b e}_1{\b e}_3, {\b e}_2{\b e}_3\}$ is a basis of $({\Lambda_0})_\pp$. Let  ${\b e}_i^\sharp =Q({\b e}_i)^{-1} {\b e}_i$ for $i=1,2,3$. It is easy to see that $\{{\b e}_1^\sharp , {\b e}_2^\sharp ,{\b e}_3^\sharp\}$ is a basis of $L^\sharp_\pp$ (in fact the dual basis). Since $\delta_L=
u {\b e}_1{\b e}_2{\b e}_3$, where $u\in A_\pp^\times$, we see by direct calculation
$\delta_L {\b e}_1^\sharp = u {\b e}_2 {\b e}_3,\  \delta_L {\b e}_2^\sharp = -u {\b e}_1 {\b e}_3,\ \delta_L {\b e}_3^\sharp = u {\b e}_1 {\b e}_2$. This shows the desired equality.
\end{proof}

\begin{proposition}\label{P:injectivity}
Let $L, M$ be integral $A$-lattices of $V$. 
If $C_0(L)=C_0(M)$, then $M = L$.
\end{proposition}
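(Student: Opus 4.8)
The plan is to invert the construction of Lemma \ref{L:lattice recovery}, which recovers the dual lattice $L^{\sharp}$ from the even order $C_0(L)$ via the formula $\delta_L L^{\sharp} = \Lambda_0$, where $\Lambda_0 = \{x \in C_0(L) : \Tr(x) = 0\}$ is the trace-zero submodule. The condition $\Tr(x)=0$ is intrinsic to the ambient algebra $C_0(V,Q)$, so the hypothesis $C_0(L) = C_0(M)$ gives at once $\Lambda_0(L) = \Lambda_0(M)$. Thus the whole statement reduces to showing that the central elements $\delta_L$ and $\delta_M$ agree up to a factor in $A^{\times} = \F_q^{\times}$: once this is known, multiplying a lattice by such a unit changes nothing, and the recovery formula combined with biduality $(L^{\sharp})^{\sharp}=L$ finishes the proof.

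First I would compare determinants. By Proposition \ref{P:det} we have $\det(C_0(L)) = \det(L)^2$ and $\det(C_0(M)) = \det(M)^2$, so $C_0(L)=C_0(M)$ forces $\det(L)^2 = \det(M)^2$ in $A/(\F_q^{\times})^2$. On the other hand, since $L$ and $M$ are full $A$-lattices in the same space $V$, a change-of-basis matrix in $\GL_3(K)$ shows that $\det(L)$ and $\det(M)$ differ by a square in $K^{\times}$. Writing $\det(L) = \lambda^2 \det(M)$ with $\lambda\in K^{\times}$ and feeding this into the previous relation gives $\lambda^4 \in \F_q^{\times}$; because $\F_q$ is algebraically closed in $K = \F_q(t)$, the element $\lambda$ is itself algebraic over $\F_q$ and hence lies in $\F_q^{\times}$. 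Therefore $\det(L) = \det(M)$ as classes in $A/(\F_q^{\times})^2$, i.e. $L$ and $M$ have the same determinant.

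Next I would apply the lemma above (lattices with equal determinant satisfy $\delta_L = \delta_M u$ for some $u \in \F_q^{\times}$), which now applies verbatim, to conclude $\delta_L = u\,\delta_M$ with $u \in \F_q^{\times}$. Finally I would assemble the pieces: Lemma \ref{L:lattice recovery} yields $\delta_L L^{\sharp} = \Lambda_0(L) = \Lambda_0(M) = \delta_M M^{\sharp}$; substituting $\delta_L = u\,\delta_M$ and cancelling the invertible central element $\delta_M$ (an invertible element of the quadratic extension $Z(C(V,Q))/K$) gives $u\,L^{\sharp} = M^{\sharp}$, whence $L^{\sharp} = M^{\sharp}$ since $u \in \F_q^{\times}\subset A^{\times}$, and taking duals yields $L = M$.

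The step I expect to be the main obstacle is precisely the passage from $\delta_L = u\,\delta_M$ with $u\in K^{\times}$ to $u \in \F_q^{\times}$: a priori the two central generators differ only by a scalar in $K^{\times}$, and it is the determinant computation of Proposition \ref{P:det}, together with the fact that $\F_q$ is algebraically closed in $K$, that pins this scalar down to $\F_q^{\times}$ so that it can be absorbed into the lattice. Everything else is formal once the recovery formula of Lemma \ref{L:lattice recovery} and the biduality of lattices are in place.
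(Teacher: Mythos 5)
Your proof is correct and follows essentially the same route as the paper: both arguments rest on the recovery formula $\delta_L L^{\sharp}=\Lambda_0$ of Lemma \ref{L:lattice recovery} together with the determinant comparison from Proposition \ref{P:det}, with the key point in each case being that the scalar relating $\delta_L$ and $\delta_M$ is forced into $\F_q^{\times}$. The only (immaterial) difference is the order of operations — the paper first deduces $L=\alpha M$ with $\alpha\in K^{\times}$ and then uses determinants to conclude $\alpha\in A^{\times}$, whereas you first establish $\det L=\det M$ (via the change-of-basis square argument) so as to quote the earlier lemma giving $\delta_L=u\,\delta_M$ with $u\in\F_q^{\times}$.
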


\begin{proof} Recall that $\delta_L$ denotes the element of $C(L)$ such that
$Z(C(L))=A[\delta_L]$ and $\delta_L^2=-\det L$. With this notation, if $C_0(L)=C_0(M)$ then
by Lemma~\ref{L:lattice recovery}, we have
$\delta_{L} L^{\sharp} = \delta_{M} M^{\sharp}$.
Since $\delta_{L} = \delta_{M} \alpha$ for some $\alpha \in K^{\times}$, we have
$L =\alpha M$. Now, by Proposition \ref{P:det}, we have $\det L=\det M$. This immediately implies
$\alpha\in A^\times$ and hence $L=M$.

 \end{proof}
 
It is a standard fact (see e.g. \cite[Theorem 3.1 and Corollary 2, Ch. 10]{Cassels:1978rf}) 
that $u V u^{-1}= V$ for each $u \in C_0(V,Q)^{\times}$ and that
the map \mbox{$c_u : V \to V$} 
defined by $c_u(x) = u x u^{-1}$ is an automorphism of $(V,Q)$. Moreover, the map
$c : C_0(V,Q)^{\times} \to O(V)$ given by $u\mapsto  c_u$ is a group
homomorphism inducing the exact sequence
\begin{equation}\label{E:exact1}
1 \longrightarrow K^{\times} \overset{i}{\longrightarrow} C_0(V,Q)^{\times}
\overset{c}{\longrightarrow} SO(V,Q) \longrightarrow 1.
\end{equation}

\begin{proposition}
Let $L$ and $M$ be integral $A$-lattices in $V$. Then $L$ and $M$ are isometric
if and only if the $A$-orders $C_0(L)$ and $C_0(M)$ are conjugate in $C_0(V,Q)$.
\end{proposition}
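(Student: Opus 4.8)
The plan is to prove both implications, exploiting the exact sequence \eqref{E:exact1} and the injectivity result of Proposition \ref{P:injectivity}. The key observation is that the conjugation action of $C_0(V,Q)^\times$ on $V$ induces, via $c$, precisely the group $SO(V,Q)$, and this same action extends naturally to the even Clifford algebra orders: conjugating a lattice $L$ by an element $u$ and then forming $C_0(uLu^{-1})$ should agree with $uC_0(L)u^{-1}$, since $C_0(L)$ is the $A$-subalgebra generated by products of even numbers of elements of $L$.

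\textbf{The forward direction.} First I would assume $L$ and $M$ are isometric, say $\sigma(L)=M$ for some $\sigma\in O(V,Q)$. The Clifford functor is natural with respect to isometries, so any isometry $\sigma$ of $(V,Q)$ induces an algebra automorphism of $C_0(V,Q)$ carrying $C_0(L)$ onto $C_0(M)$. The issue is that I want \emph{conjugation}, not merely an abstract automorphism. If $\sigma\in SO(V,Q)$, then by the surjectivity in \eqref{E:exact1} I can write $\sigma=c_u$ for some $u\in C_0(V,Q)^\times$, and then the induced automorphism of $C_0(V,Q)$ is exactly $x\mapsto uxu^{-1}$ (this is a standard feature of how $SO$ acts through the Clifford algebra), giving $C_0(M)=uC_0(L)u^{-1}$. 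If instead $\sigma$ has determinant $-1$, I would need to account for the fact that improper isometries act on $C_0$ through the main involution composed with conjugation; here the definiteness/ternary hypothesis helps, since $-\mathrm{id}\in O(V)$ acts trivially on $C_0(V,Q)$ and has determinant $-1$ in odd dimension, so composing $\sigma$ with $-\mathrm{id}$ reduces any improper isometry to a proper one without changing the induced action on $C_0(L)$. This reduction to $SO$ is the point that requires care.

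\textbf{The reverse direction.} Conversely, suppose $C_0(M)=uC_0(L)u^{-1}$ for some $u\in C_0(V,Q)^\times$. I would use that $uC_0(L)u^{-1}=C_0(c_u(L))$ by the naturality just discussed, so $C_0(M)=C_0(c_u(L))$. Since $c_u$ is an isometry of $(V,Q)$, the lattice $c_u(L)$ is integral of the same determinant as $L$. Now Proposition \ref{P:injectivity} applies directly: two integral lattices with equal even Clifford orders are equal, so $M=c_u(L)$. Since $c_u\in SO(V,Q)$, this exhibits $M$ as the isometric image of $L$, completing the proof.

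The main obstacle I anticipate is establishing the compatibility $C_0(c_u(L))=uC_0(L)u^{-1}$ cleanly, and more delicately, handling the distinction between proper and improper isometries in the forward direction. The reverse direction is essentially immediate once Proposition \ref{P:injectivity} is in hand, so the real content lies in verifying that the conjugation action on orders faithfully mirrors the $SO$-action on lattices, together with the observation that in odd dimension the scalar $-\mathrm{id}$ lets one pass freely between $O(V)$ and $SO(V)$ for the purposes of this action.
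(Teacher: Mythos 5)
Your proof is correct and takes essentially the same route as the paper's: surjectivity of $c$ in \eqref{E:exact1} together with the compatibility $C_0(uLu^{-1}) = u\,C_0(L)\,u^{-1}$ gives the forward direction, and Proposition \ref{P:injectivity} gives the converse. The only difference is that you explicitly justify the reduction from $O(V,Q)$ to $SO(V,Q)$ via $-\mathrm{id}$ in odd dimension, a point the paper leaves implicit by simply writing $M=\sigma L$ with $\sigma\in SO(V,Q)$ from the start.
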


\begin{proof}
Assume that $L$, $M$ are isometric. Then $M = \sigma L$ for some 
$\sigma \in SO(V,Q)$. By \eqref{E:exact1} we have $\sigma = c_u$ for some $u \in
C_0(V,Q)^{\times}$. Thus $M = u L u^{-1}$, which implies that
$C_0(M) = u C_0(L) u^{-1}$. Conversely, assume that $C_0(M) = u C_0(L) u^{-1}$.
Then
 \[
  C_0(u L u^{-1}) = u \mspace{2mu}C_0(L) \mspace{1mu} u^{-1} = C_0(M).
 \]
 By Proposition~\ref{P:injectivity}, $u L u^{-1} = M$. 
\end{proof}

\begin{theorem}\label{T:surjectivity}
 Let $D$ be the determinant of a maximal lattice in $(V,Q)$ and let $\Lambda$ be an $A$-order of
 $C_0(V,Q)$ such that $\det(\Lambda) = b^4 D^2$ for some $b \in A$. Then there exists an integral
 $A$-lattice $L\subset V$ such that
 $ \Lambda = C_0(L)$. 
\end{theorem}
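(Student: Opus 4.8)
The plan is to invert the recovery procedure of Lemma~\ref{L:lattice recovery}. Since $q$ is odd we have $2\in A^\times$, so any order $\Lambda$ splits as an $A$-module into its scalar and trace-zero parts: writing $x=\tfrac12\Tr(x)\cdot 1+\bigl(x-\tfrac12\Tr(x)\bigr)$ and noting that $\tfrac12\Tr(x)\in A$ for integral $x$, we obtain $\Lambda=A\oplus\Lambda_0$, where $\Lambda_0=\{x\in\Lambda:\Tr(x)=0\}$. The same decomposition holds for $C_0(L)$, so the desired identity $\Lambda=C_0(L)$ is equivalent to the single condition $\Lambda_0=C_0(L)_0$, which by Lemma~\ref{L:lattice recovery} reads $\Lambda_0=\delta_L L^{\sharp}$. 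Thus it suffices, starting from the lattice $\Lambda_0$, to produce an integral $L\subset V$ realizing this equality.

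To build $L$ I would fix $\delta\in C(V,Q)$ with $\delta^2=-D$, so that multiplication by $\delta$ is the standard similitude carrying $(V,Q)$ isometrically onto $(C_0(V,Q)_0,N)$ scaled by $D$; indeed $N(\delta\b x)=-\delta^2 Q(\b x)=D\,Q(\b x)$. Set $W=\delta^{-1}\Lambda_0\subset V$. The hypothesis $\det(\Lambda)=b^4D^2$ together with Proposition~\ref{P:det} forces the eventual lattice to have determinant $b^2D$, so I put $L=b\,W^{\sharp}$, the $B$-dual of $W$ rescaled by $b$, with $b$ read off from the determinant. With this choice $L^{\sharp}=b^{-1}W$ and $\delta_L=b\delta$ (since $\delta_L^2=-\det L=-b^2D=(b\delta)^2$ and $\delta_L$ is only defined up to a scalar), whence $\delta_L L^{\sharp}=b\delta\cdot b^{-1}W=\delta W=\Lambda_0$. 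By the first paragraph this gives $C_0(L)=\Lambda$, provided $L$ is genuinely an integral lattice.

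Integrality is where the order structure of $\Lambda$, and not merely its determinant, must enter. For $x,y\in\Lambda_0$ one has $xy+yx=-2\langle x,y\rangle\cdot 1$, and since $xy,yx\in\Lambda$ this scalar lies in $\Lambda\cap K=A$; hence $\langle\cdot,\cdot\rangle$ is $A$-valued on $\Lambda_0$, that is, $\Lambda_0$ is $N$-integral. More is true: the trace-zero component $\tfrac12(xy-yx)$ of the product again lies in $\Lambda_0$, so $\Lambda_0$ is closed under the quaternionic cross product. Transporting these two facts through the similitude $\delta$ and the duality $\sharp$ is what should yield $Q(L)\subseteq A$ and $B(L,L)\subseteq A$.

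I expect the verification of integrality to be the main obstacle, and I would carry it out locally. At each prime $\pp$ one has $\det(\Lambda_\pp)=(b^2D)_\pp^2$, and using the classification of orders in the quaternion algebra $C_0(V,Q)_\pp$ one checks directly, splitting into the cases $\pp\nmid D$, $\pp\mid D_0$ (split), and $\pp\mid D_1$ (ramified), that an order of this reduced discriminant is the even Clifford order $C_0(L_\pp)$ of a suitable local lattice. The determinant condition is precisely what rules out the non-Gorenstein orders, whose discriminant valuations have the wrong parity and which lie outside the image of $L\mapsto C_0(L)$. Finally, since $\Lambda_\pp=C_0(M_\pp)$ for a fixed maximal lattice $M$ at all but finitely many $\pp$, only finitely many local adjustments are needed, and these glue to a global integral $L$ with $C_0(L)=\Lambda$.
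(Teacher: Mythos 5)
Your first two paragraphs are the paper's own proof in different notation. Since $\delta W=\Lambda_0$, the $B$-dual of $W$ inside $V$ is $W^{\sharp}=\delta\,\Lambda_0^{\sharp}$ (the dual on the right taken with respect to $N$, up to a unit), so your $L=b\,W^{\sharp}$ is exactly the paper's lattice $L=(b\delta)\Lambda_0^{\sharp}$, and your computation $\delta_L L^{\sharp}=b\delta\cdot b^{-1}W=\Lambda_0$ is the paper's similitude/double-dual computation; likewise the splitting $\Lambda=A\oplus\Lambda_0$ in your first paragraph is how the paper implicitly passes from $\Lambda_0=C_0(L)_0$ to $\Lambda=C_0(L)$. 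Both you and the paper take two points as read: that $\det L=b^2D$ up to unit squares (needed to write $\delta_L=b\delta$), which does follow from $\det\Lambda_0=\det\Lambda=b^4D^2$ and the scaling of determinants under $x\mapsto\delta x$, and that $L$ is integral, without which Lemma~\ref{L:lattice recovery} cannot be invoked at all.

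On integrality your diagnosis is sharper than the paper's write-up, but the execution has a genuine gap plus one false claim. You correctly isolate the two consequences of the ring structure, namely $\langle\Lambda_0,\Lambda_0\rangle\subseteq A$ and $\Lambda_0\times\Lambda_0\subseteq\Lambda_0$, but ``transporting these two facts \ldots is what should yield $Q(L)\subseteq A$'' is precisely the step that constitutes the proof, and you never carry it out; the local-classification fallback you substitute rests on a wrong assertion. The determinant condition does \emph{not} rule out non-Gorenstein orders, nor do such orders lie outside the image of $L\mapsto C_0(L)$: for any nonconstant $c\in A$ and any maximal lattice $M$, the lattice $cM$ is integral with $\det(cM)=c^6D=b^2D$ for $b=c^3$, and $C_0(cM)=A+c^2\,C_0(M)$ is a non-Gorenstein order (its Gorenstein saturation is $C_0(M)$) of determinant $b^4D^2$, satisfying the hypothesis of the theorem and manifestly in the image. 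What the determinant hypothesis actually pins down is the square class of the discriminant, not any Gorenstein dichotomy. The clean way to finish is the transport you announce: over $A_\pp$ pick a basis $x_1,x_2,x_3$ of $(\Lambda_0)_\pp$ and set $T=\langle x_1\times x_2,x_3\rangle$; the classical identities (the trilinear form $\langle x\times y,z\rangle$ is alternating and $T^2=\det(\Lambda_0)_\pp$, so $T=b^2D$ up to a unit) give $T\,x_i^{\ast}=\pm\, x_j\times x_k$ for the dual basis, whence closure of $\Lambda_0$ under $\times$ yields $b^2D\,\Lambda_0^{\sharp}\subseteq\Lambda_0$. Consequently, for $u,v\in\Lambda_0^{\sharp}$ one gets $B(b\delta u,\,b\delta v)=b^2D\langle u,v\rangle=\langle b^2D\,u,\,v\rangle\in\langle\Lambda_0,\Lambda_0^{\sharp}\rangle\subseteq A$, which is the integrality of $L$; the same identity gives $\det L=(b^2D)^3\det(\Lambda_0^\sharp)=b^2D$, closing the remaining debt in your second paragraph.
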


\begin{proof}
Let $M\subset V$ be a maximal lattice and let $\delta=b \delta_M$, so $\delta^2=- b^2 D$.  
Define $L = \delta {\Lambda_0}^{\sharp}$, where ${\Lambda_0}=\{x\in\Lambda\: : \:\Tr(x)=0\}$ and let ${\Lambda_0}^{\sharp}$
be the dual of ${\Lambda_0}$ with respect to the norm form $N$. By
Lemma \ref{L:lattice recovery}, it is enough to show that $\delta L^\sharp = {\Lambda_0}$.
Let 
$C_0(V,Q)_0=\{x\in C_0(V,Q)\: : \:\Tr(x)=0\}$.  For $x\in C_0(V,Q)_0$ we have
$Q(\delta x)=(\delta x)^2=\delta^2 x^2=-\delta^2 N(x)$, so
multiplication by $\delta$ in $C(V,Q)$ induces an isomorphism of
quadratic spaces $(C_0(V,Q)_0, -\delta^2 N)\stackrel{\simeq}{\to} (V,Q)$. It follows
immediately from this observation that $\delta^{-1} L^\sharp= \delta^{-2} {({\Lambda_0}^\sharp)}^\sharp=
\delta^{-2} {\Lambda_0}$. Thus $\delta L^\sharp={\Lambda_0}$ as desired.

\end{proof}

\begin{corollary}\label{C:maxorders} Let $D\in A$ be a square-free polynomial and assume that
$V$ admits integral lattices of discriminant $D$. Suppose further that $Q$ is anisotropic at all prime divisors of $D$. Then an integral lattice $L\subset V$
is maximal if and only if its even Clifford algebra $C_0(L)$ is a maximal order. 
\end{corollary}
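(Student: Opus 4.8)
The plan is to reduce the statement to a comparison of determinants, using Proposition \ref{P:det} together with the multiplicativity of the determinant under inclusion of lattices. Recall that for integral $A$-lattices $L\subseteq M$ in a quadratic space one has $\det(L)=[M:L]^2\det(M)$, where $[M:L]\in A$ is the module index, and that the same formula holds for $A$-orders in $C_0(V,Q)$ with respect to the norm form $N$.

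First I would characterize maximality of lattices by the determinant. Since $D$ is square-free and $V$ admits an integral lattice $L_0$ of determinant $D$, embedding $L_0$ in a maximal lattice $M$ gives $D=\det(L_0)=[M:L_0]^2\det(M)$; as $[M:L_0]^2\mid D$ and $D$ is square-free, the index $[M:L_0]$ is a unit, so $L_0=M$ is already maximal. Because all maximal lattices lie in a single genus (the first lemma of Section 2), they share a common determinant, which is therefore $D$. The identical index computation then shows that an arbitrary integral lattice $L$ is maximal if and only if $\det(L)=D$ modulo $(\F_q^\times)^2$: conversely, a maximal lattice has determinant $D$, and if $\det(L)=D$ then embedding $L\subseteq M$ into a maximal $M$ forces $[M:L]^2=1$, hence $L=M$.

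The crux is the analogous statement for orders: under the hypothesis that $Q$ is anisotropic at every prime dividing $D$, an $A$-order $\Lambda\subseteq C_0(V,Q)$ is maximal if and only if $\det(\Lambda)=D^2$. This is where the local structure of the quaternion algebra enters, and it is the main obstacle. I would argue locally at each prime $\pp$. At $\pp\nmid D$ the lattice $L_\pp$ is unimodular, so $Q$ is isotropic over $K_\pp$ and $C_0(V,Q)_\pp\cong M_2(K_\pp)$ is split, whence every maximal order has unit determinant. At $\pp\mid D$ the hypothesis makes $Q$ anisotropic over $K_\pp$, so $C_0(V,Q)_\pp$ is the division quaternion algebra; it has a unique maximal order $\mathcal{O}_\pp$ (the valuation ring), which contains every $A_\pp$-order and whose norm-form determinant has $\pp$-valuation $2$. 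Multiplying the local contributions shows that a maximal order has determinant $\prod_{\pp\mid D}\pp^2=D^2$ up to squares of units. The converse (that $\det(\Lambda)=D^2$ forces maximality) follows by embedding $\Lambda\subseteq\mathcal{O}$ into a maximal order and invoking the index formula $\det(\Lambda)=[\mathcal{O}:\Lambda]^2\det(\mathcal{O})=[\mathcal{O}:\Lambda]^2D^2$, which forces the index to be a unit.

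Finally I would chain the two characterizations through Proposition \ref{P:det}. For an integral lattice $L$ we have $\det(C_0(L))=\det(L)^2$, so $C_0(L)$ is maximal $\iff\det(C_0(L))=D^2\iff\det(L)^2=D^2\iff\det(L)=D$ (using that $D$ is square-free, so $\det(L)^2=D^2$ forces $\det(L)=D$) $\iff L$ is maximal. I expect the delicate points to be the identification of the finite ramified primes of $C_0(V,Q)$ with the divisors of $D$ via the anisotropy hypothesis, and the local determinant computation at $\pp\mid D$; both are handled by the standard theory of quaternion orders over local fields (e.g. \cite[Ch. 10]{Cassels:1978rf}), which applies since $q$ is odd.
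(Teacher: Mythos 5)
Your proof is correct, and its engine --- characterizing maximality of lattices and of orders by their determinants, computing the determinant of a maximal order locally ($\pp$-valuation $2$ at each ramified prime, a unit elsewhere), and transporting the comparison through Proposition \ref{P:det} --- is exactly the mechanism the paper uses for the forward implication ("$L$ maximal $\Rightarrow C_0(L)$ maximal"). Where you genuinely diverge is the converse: the paper deduces "$C_0(L)$ maximal $\Rightarrow L$ maximal" from Theorem \ref{T:surjectivity} (a maximal order has determinant $D^2$, hence equals $C_0(L')$ for some lattice $L'$ of determinant $D$, and then implicitly Proposition \ref{P:injectivity} identifies $L'$ with $L$), whereas you never invoke Theorem \ref{T:surjectivity} or Proposition \ref{P:injectivity}: you simply run the determinant characterization of maximal orders in the other direction. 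Your route is more self-contained and arguably more elementary for this corollary, at the cost of having to justify the local facts about quaternion orders (unique maximal order in the ramified case, index formula $\det(\Lambda)=[\mathcal{O}:\Lambda]^2\det(\mathcal{O})$) that the paper absorbs into its earlier structural results. One small point to tighten: from $\det(L)^2=D^2$ in $A/(\F_q^\times)^2$ you may only conclude $\det(L)=uD$ with $u\in\F_q^\times$ --- square-freeness plays no role in extracting this square root. This is harmless, since the index argument shows that any integral lattice whose determinant is square-free up to a unit is maximal; alternatively, $u$ must be a square in $\F_q^\times$ because $\det(L)$ and $D$ are determinants of lattices in the same space $V$ and hence lie in the same class of $K^\times/(K^\times)^2$.
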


\begin{proof} Let $L$ be a maximal lattice in $V$. Then $\det (L)=D$ and therefore by Proposition \ref{P:det} we have $\det C_0(L)=D^2$.
Notice that $D^2$ is the determinant of a maximal order since all its prime divisors
are ramified in $C_0(V,Q)$ ($Q$ is anisotropic at all these primes) and $D$ is square-free. This
is enough to ensure that $C_0(L)$ is maximal.

Conversely, if $\Lambda$ is a maximal order, then by Theorem \ref{T:surjectivity}, $\Lambda=C_0(L)$ for some 
lattice $L$ of determinant $D$.

\end{proof}

\begin{proposition}\label{C:prim}
An element $a\in A$ relatively prime to $\det L$ is represented primitively by $L$ if and only if $C_0(L)$ contains
a primitive element $\lambda$ such that $\lambda^2 = -a D$.
\end{proposition}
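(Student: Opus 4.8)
The plan is to transport the whole question across the isometry established in the proof of Theorem~\ref{T:surjectivity}. Fix the generator $\delta=\delta_L$ of $Z(C(L))$ normalized so that $\delta^2=-D$; it is central in $C(V,Q)$, and for $\b v\in V$ we have $(\delta\b v)^2=\delta^2\b v^2=-D\,Q(\b v)$, since $\b v^2=Q(\b v)$ in $C(V,Q)$. As noted in the proof of Theorem~\ref{T:surjectivity}, multiplication by $\delta$ identifies $V$ with the trace-zero subspace $C_0(V,Q)_0$; under this identification the fibre $\{\b v\in V:Q(\b v)=a\}$ corresponds bijectively to $\{\lambda\in C_0(V,Q)_0:\lambda^2=-aD\}$, via $\b v\mapsto\lambda=\delta\b v$ and $\lambda\mapsto\b v=\delta^{-1}\lambda$. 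The proposition then amounts to checking that this correspondence respects integrality and primitivity.

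For integrality I will invoke Lemma~\ref{L:lattice recovery}, which gives $\delta L^{\sharp}=\Lambda_0$ with $\Lambda_0=\{x\in C_0(L):\Tr(x)=0\}$; thus $\delta$ matches $L^{\sharp}$ with $\Lambda_0$. Conversely, any $\lambda\in C_0(L)$ with $\lambda^2=-aD$ is automatically trace-zero: since $D$ is square-free and prime to $a$, the scalar $-aD$ is a non-square in $K$, so $\lambda\notin K$ and its degree-two minimal polynomial forces $\Tr(\lambda)=0$; hence $\lambda\in\Lambda_0=\delta L^{\sharp}$ and $\b v=\delta^{-1}\lambda\in L^{\sharp}$. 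Therefore the correspondence of the first paragraph restricts to a bijection between $\{\b v\in L^{\sharp}:Q(\b v)=a\}$ and $\{\lambda\in C_0(L):\lambda^2=-aD\}$. It remains to show that the hypothesis $(a,D)=1$ both pushes such a $\b v$ from $L^{\sharp}$ into $L$ and makes the relevant primitivity conditions agree.

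This local comparison is the main obstacle, and it is exactly where $(a,D)=1$ does its work. I will argue one prime $\pp$ at a time, using an orthogonal basis $\b e_1,\b e_2,\b e_3$ of $L_\pp$ with $Q(\b e_i)=d_i$, $d_1d_2d_3=D$, normalized so $\delta=\b e_1\b e_2\b e_3$; the computation in Lemma~\ref{L:lattice recovery} gives $\delta\b e_i^{\sharp}=\pm\,\b e_j\b e_k$, so the images $\delta\b e_i^{\sharp}$ are precisely the basis $\{\b e_2\b e_3,\b e_1\b e_3,\b e_1\b e_2\}$ of $(\Lambda_0)_\pp$. Since the $1$-coordinate of any $\lambda\in\Lambda_0$ vanishes, $\lambda$ is primitive in $C_0(L)_\pp$ iff it is primitive in $(\Lambda_0)_\pp$ iff $\b v$ is primitive in $L^{\sharp}_\pp$. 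At $\pp\nmid D$ one has $L_\pp=L^{\sharp}_\pp$, so there is nothing to check. At $\pp\mid D$, square-freeness gives $\ord_\pp(d_i)=1$ for exactly one index, say $i=3$, with $d_1,d_2$ units; since $\b v\in L^{\sharp}_\pp$ we may write $\b v=c_1\b e_1+c_2\b e_2+c_3\b e_3$ with $c_1,c_2\in A_\pp$ and $\ord_\pp(c_3)\ge -1$. If $\ord_\pp(c_3)=-1$ then $c_3^2 d_3$ has $\ord_\pp=-1$ while the other two terms of $Q(\b v)=\sum c_i^2 d_i$ are integral, forcing $\ord_\pp(a)=-1$ and contradicting that $a$ is a $\pp$-unit; hence $c_3\in A_\pp$ and $\b v\in L_\pp$. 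Reducing $a=\sum c_i^2 d_i$ modulo $\pp$, where $d_3\equiv 0$, then forces $c_1$ or $c_2$ to be a unit, whence $\b v$ is primitive in $L_\pp$ and in $L^{\sharp}_\pp$ and $\lambda$ is primitive in $C_0(L)_\pp$.

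Assembling the local statements, a vector $\b v\in L^{\sharp}$ with $Q(\b v)=a$ in fact lies in $L$, and the three notions ``$\b v$ primitive in $L$'', ``$\b v$ primitive in $L^{\sharp}$'' and ``$\lambda$ primitive in $C_0(L)$'' coincide. Reading the bijection of the second paragraph in both directions then yields the proposition: a primitive representation $Q(\b v)=a$ in $L$ produces a primitive $\lambda=\delta\b v\in C_0(L)$ with $\lambda^2=-aD$, and conversely such a $\lambda$ produces a primitive $\b v=\delta^{-1}\lambda\in L$ with $Q(\b v)=a$.
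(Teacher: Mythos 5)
Your proof is correct and takes essentially the same route as the paper's: both rest on Lemma~\ref{L:lattice recovery} ($\delta L^{\sharp}=\Lambda_0$) together with the correspondence $\b v\mapsto \delta\b v$, $\lambda\mapsto\delta^{-1}\lambda$. The paper's proof is simply far terser---it asserts without computation that primitivity in $L$ implies primitivity in $L^{\sharp}$ and declares the converse ``obvious''---so your trace-zero argument and the local analysis at primes dividing $D$ supply exactly the details the paper leaves to the reader.
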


\begin{proof} Let $\Lambda = C_0(L)$. 
Suppose that $a\in A$ is such that $a = \b x^2$ for some  primitive $\b x \in L$. 
 Since $a$ is prime to $\det L$, $\b x$ is primitive in $L^\sharp$ as well. It follows that $\delta \b x$ is primitive 
 in $\delta L^\sharp={\Lambda_0}$. Clearly $(\delta \b x)^2 = \delta^2 {\b x}^2 = - a D$. The converse is obvious.
 
\end{proof}

\section{An id{\`e}le action on primitive representations}\label{S:action on x}

As in previous sections, $(V,Q)$ denotes a ternary definite quadratic space over $K$. We assume throughout that the maximal
lattices in $V$ have square-free determinant that we denote by $D$. Recall that $\mathbb{A}_K$ denotes the
ad{\`e}le ring of $K$.

Let $a\in A$ relatively prime to $D$ and let $\b v\in V$
 be a fixed vector with $Q({\b x})=a$. We consider the set
 \[
\mathcal{L}_{\b v}=\left\{L\: : \:L\subset V\ \mathrm{integral\ lattice};\ \det L=D; \ L\ni \b v\ \mathrm{primitively}\right\}.
\]

We denote by $\SO(V,Q)$ the group of orientation-preserving automorphisms of $(V,Q)$ that
we view as an algebraic group defined over $K$.  Let $\G$ be the stabilizer of 
$\b v$, which is a closed subgroup of $\SO(V,Q)$. It is clear that $\G(\A)$ acts on $\mathcal{L}_{\b v}$ by restriction of the natural action
of $\SO(V,Q)(\A)$ on the set of all $A$-lattices of $V$.

\begin{proposition}\label{P:trans}
The action of $\G(\A)$ on $\mathcal{L}_{\b v}$ is transitive.
\end{proposition}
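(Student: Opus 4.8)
The plan is to prove transitivity by reducing the global problem to a local one at each prime and then invoking strong approximation for the group $\G$. First I would identify the stabilizer $\G$ more concretely: since $\G$ fixes the anisotropic vector $\b v$ with $Q(\b v)=a\neq 0$, it acts on the orthogonal complement $\b v^\perp$, which is a binary definite quadratic space. Thus $\G$ is the special orthogonal group of this binary form, and over $K$ it is isomorphic to the norm-one torus of the quadratic extension $K(\sqrt{-aD})/K$ (the discriminant of $\b v^\perp$ being $-aD$ up to squares). This identification is the conceptual heart of the argument and ties directly to the Picard group appearing in the main theorem.

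Next I would verify local transitivity. Given two lattices $L, L'\in\mathcal{L}_{\b v}$, I want to show that for each prime $\pp$ (including $\infty$) there is an element $g_\pp\in\G(K_\pp)$ with $g_\pp L_\pp = L'_\pp$. Since $L$ and $L'$ both have determinant $D$ and both contain $\b v$ primitively, their completions $L_\pp$ and $L'_\pp$ are isometric lattices in $V_\pp$ containing $\b v$ primitively. The key local fact is that any isometry of $V_\pp$ carrying $L_\pp$ to $L'_\pp$ can be adjusted to fix $\b v$: because $\b v$ is primitive with $Q(\b v)=a$ a unit (as $a$ is prime to $D$ at the ramified primes, and the lattice splits off $\b v$ as a unimodular line), Witt's extension theorem locally produces an isometry fixing $\b v$ and matching the complements. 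For almost all primes $L_\pp=L'_\pp$ and we may take $g_\pp=1$, so the resulting element lies in $\G(\A)$.

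Finally I would globalize using strong approximation. The assembled collection $(g_\pp)$ gives an element of $\G(\A)$ sending $L$ to $L'$ after completion everywhere. To obtain a single global automorphism realizing this — or rather to conclude that the adelic orbit is a single point — I rely on the fact that $\G$, being a one-dimensional torus, has class number controlled by the Picard group of the relevant order, and more precisely that the local isometries patch into a genuine element of $\G(\A)$ acting on the (discrete) set $\mathcal{L}_{\b v}$. Here one simply observes that $\SO(V,Q)(\A)$ already acts transitively on lattices of a fixed genus containing $\b v$ primitively at each place, and restricting to the stabilizer of the global vector $\b v$ does not shrink the orbit because the obstruction to fixing $\b v$ has been removed place by place.

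The main obstacle I expect is the local step at the primes dividing $D$, where $(V_\pp, Q)$ is anisotropic and the lattice has a unimodular binary factor rather than a hyperbolic one. There Witt cancellation still applies, but one must check carefully that fixing the primitive vector $\b v$ does not cost anything — that is, that the orthogonal complement $\b v^\perp\cap L_\pp$ and $\b v^\perp\cap L'_\pp$ are isometric as lattices over $A_\pp$. This follows from the theory of maximal and modular lattices over local fields, but it is the one place where the square-free hypothesis on $D$ and the relative primality of $a$ are genuinely used, so I would treat it with care.
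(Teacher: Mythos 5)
Your overall structure --- reduce adelic transitivity to local transitivity at each prime and assemble the local elements into an element of $\G(\A)$ --- is the same as the paper's, and your identification of $\G$ with the norm-one torus of $K(\sqrt{-aD})/K$ is correct. But there is a genuine gap in the local step, and you have located the difficulty at the wrong primes. Your key local fact is justified by the claim that $Q(\b v)=a$ is a unit, so that $A_\pp\b v$ splits off as a unimodular orthogonal factor of $L_\pp$ and $L'_\pp$. That is true exactly at the primes $\pp\nmid a$ --- including \emph{all} primes dividing $D$, since $a$ is prime to $D$ --- and at those primes transitivity is indeed just splitting off $\b v$ plus Witt cancellation; so the case $\pp\mid D$, which you flag as the ``main obstacle,'' is in fact the easy case. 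The case your argument does not cover is $\pp\mid a$: there $a$ is \emph{not} a unit in $A_\pp$, the line $A_\pp\b v$ is not unimodular and does not split off, and ``matching the complements'' in the form you invoke is not available.

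The paper's proof treats exactly this case by an explicit construction: since $\b v$ is primitive in $L_\pp$ and $L_\pp$ is unimodular (because $\pp\nmid D$), there is $\b v_2\in L_\pp$ with $B(\b v,\b v_2)\equiv 1$ and $Q(\b v_2)\equiv 0 \pmod{\pp}$; Hensel's lemma and rescaling give $Q(\b v_2)=0$ and $B(\b v,\b v_2)=1$, so $\b v,\b v_2$ span a unimodular binary sublattice and $L_\pp$ has a basis with Gram matrix $\left(\begin{smallmatrix} a & 1\\ 1 & 0\end{smallmatrix}\right)\perp\langle -D\rangle$. The same construction in $L'_\pp$ and the basis-to-basis map then produce the required $\sigma_\pp\in\G(K_\pp)$. (Alternatively one could cite transitivity of the orthogonal group of a unimodular local lattice on primitive vectors of a given value, but some such input at $\pp\mid a$ is indispensable; your proposal supplies none.) Separately, your appeal to strong approximation is both unnecessary and unjustified: the acting group is the adelic group $\G(\A)$, so transitivity is a purely local question --- one takes $\sigma_\pp=1$ at the almost-all primes where $L_\pp=L'_\pp$ --- and in any case strong approximation generally fails for tori such as $\G$, so it could not serve as a crutch even if a globalization step were needed.
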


\begin{proof} Let $\pp$ be a prime of $A$ and let $L, L'\in \mathcal{L}_{\b v}$. Assume first $\pp \nmid a=Q(\b v)$. Then
$A_\pp \b v$ is an orthogonal factor of both $L_\pp$ and $L'_\pp$. Write
$L_\pp=A_\pp \b v\perp M_\pp$ and $L'_\pp=A_\pp \b v\perp M'_\pp$. Since $L_\pp$ and $L'_\pp$ are isometric, so must be
$M_\pp$ and $M'_\pp$ and hence there exists $\sigma_\pp\in \G (K_\pp)$ such that $\sigma_\pp L_\pp=L'_\pp$.

Assume now $\pp \mid a=Q(\b v)$. Since $\b v$ is primitive in $L_\pp$, there exists ${\b v}_2\in L_\pp$ such that
$B(\b v, {\b v}_2)\equiv 1\pmod \pp$ and $Q({\b v}_2)\equiv 0 \pmod \pp$ . Since $L_\pp$ is unimodular (recall that 
$a$ and $D$ are relatively prime), we can assume further by Hensel's Lemma that $Q({\b v}_2)=0$. Replacing ${\b v}_2$ by a suitable
scalar multiple, we can also assume $B(\b v, {\b v}_2)=1$. Hence $\b v$ and $\b v_2$ span a unimodular binary lattice $N_\pp$ of determinant $-1$. Taking the orthogonal complement of $N_\pp$ in $L_\pp$ we get a vector $\b v_3\in L_\pp$ such
that $\{\b v, \b v_2, \b v_3\}$ is a basis of $L_\pp$. The Gram matrix of $Q$ in this basis is
\begin{equation}\label{E:Gram}
\begin{pmatrix} a & 1 \\1 & 0\end{pmatrix}\perp\langle  -D \rangle.
\end{equation}
The same construction yields vectors $\b v'_2, \b v'_3\in L'_\pp$ such that $\{\b v, \b v'_2, \b v'_3\}$ is
a basis of $L'_\pp$ and the Gram matrix of $Q$ in this basis is also as in \eqref{E:Gram}. Clearly the linear map 
$\sigma_\pp :V_\pp\to V_\pp$
given by $\sigma_\pp \b v= \b v, \sigma_\pp \b v_2= \b v'_2, \sigma_\pp \b v_3= \b v'_3$ is in $\G (K_\pp)$ and
satisfies $\sigma_\pp L_\pp=L'_\pp$.
\end{proof}

Consider now the natural exact sequence of algebraic groups over $K$ (see e.g. \cite[Ch. 8, \S 23]{Knus:1998vn} for
the general case)
\begin{equation}\label{E:exact0}
\begin{CD}
1@>>>\GL_1(K)@>i>> \GL_1 (C_0(V,Q))@>c>> \SO(V,Q) @>>> 1,
\end{CD}
\end{equation}
where $i$ is the natural inclusion and $c_u(\b x)=u \b x u^{-1}$. 
\medskip

Let $E=Z_{C_0(V,Q)}(\b v)$. Restricting $c$ to $\GL_1(E)$ we get an exact sequence of algebraic
groups over $K$
\begin{equation}\label{E:exact2}
\begin{CD}
1@>>>\GL_1(K)@>i>> \GL_1 (E)@>c>> \G @>>> 1.
\end{CD}
\end{equation}

\begin{lemma} Let $\J_K$ and $\J_E$ denote the id{\`e}le groups of $K$ and $E$ respectively. Then
we have an exact sequence
\begin{equation}\label{E:exact3}
\begin{CD}
1@>>>\J_K@>i>> \J_E@>c>> \G(\A) @>>> 1.
\end{CD}
\end{equation}
\end{lemma}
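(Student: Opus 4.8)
The plan is to deduce \eqref{E:exact3} by applying the adèle-point functor to the algebraic exact sequence \eqref{E:exact2} and checking exactness one place at a time. First I would record the identifications that make this meaningful: since $E=Z_{C_0(V,Q)}(\b v)$ is a quadratic étale $K$-algebra, its idèle group is $\J_E=\GL_1(E)(\A)$, while $\J_K=\GL_1(K)(\A)$, and with these identifications the maps $i$ and $c$ in \eqref{E:exact3} are simply the $\A$-points of the morphisms $i$ and $c$ of \eqref{E:exact2}. Left exactness is then automatic and costs nothing: the kernel of $c\colon\J_E\to\G(\A)$ is the restricted product over all places $v$ of $\ker\bigl(\GL_1(E)(K_v)\to\G(K_v)\bigr)=K_v^{\times}$, which is exactly $\J_K$. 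So exactness at $\J_K$ and at $\J_E$ reduces to the algebraic statement already contained in \eqref{E:exact2}.

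The real content is the surjectivity of $c\colon\J_E\to\G(\A)$, and here I would argue locally. Fix a place $v$ of $K$. The long exact cohomology sequence attached to \eqref{E:exact2}, base-changed to $K_v$, contains the segment $\GL_1(E)(K_v)\overset{c}{\longrightarrow}\G(K_v)\longrightarrow H^1(K_v,\GL_1)$. Because the kernel $\GL_1$ is the multiplicative group, Hilbert's Theorem~90 gives $H^1(K_v,\GL_1)=0$, so $c$ is surjective on $K_v$-points for \emph{every} $v$. This handles the finitely many problematic places outright.

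To pass from local surjectivity to surjectivity on the restricted product I must control integrality at almost all places, and this is the step I expect to be the main obstacle. I would fix compatible integral models: outside a finite set $S$ of places (containing $\infty$, the primes ramifying in $E/K$, and those dividing $D$) the algebra $E$ extends to an étale $\mathcal O_v$-algebra $\mathcal O_E\otimes\mathcal O_v$ and \eqref{E:exact2} extends to a short exact sequence of smooth $\mathcal O_v$-group schemes with kernel $\GL_1$. The obstruction to lifting a point of $\G(\mathcal O_v)$ to a unit of $(\mathcal O_E\otimes\mathcal O_v)^{\times}$ then lies in $H^1(\mathcal O_v,\GL_1)=\Pic(\mathcal O_v)$, which vanishes since $\mathcal O_v$ is a discrete valuation ring. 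Hence for $v\notin S$ every $g_v\in\G(\mathcal O_v)$ lifts to an integral unit.

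Finally I would assemble the global lift. Given $(g_v)\in\G(\A)$, enlarge $S$ to include the finitely many $v$ with $g_v\notin\G(\mathcal O_v)$; for $v\in S$ lift $g_v$ by Hilbert~90, and for $v\notin S$ lift it by the integral argument. The resulting family $(u_v)$ lies in $\J_E$ — not merely in $\prod_v(E\otimes K_v)^{\times}$ — precisely because $u_v$ is an integral unit for almost all $v$, and it maps to $(g_v)$ under $c$. This proves surjectivity and completes the exactness of \eqref{E:exact3}. The delicate point is exactly this restricted-product bookkeeping: local surjectivity is immediate from Hilbert~90, but guaranteeing that the lifts can be chosen integral outside $S$, so that they genuinely form an idèle of $E$, is where the smooth integral models and the triviality of the local Picard groups do the work.
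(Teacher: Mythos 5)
Your proposal is correct and follows essentially the same route as the paper: the paper also obtains \eqref{E:exact3} by taking points over the adèle ring $\A$ in the algebraic exact sequence \eqref{E:exact2}, under the identifications $\J_K=\GL_1(K)(\A)$ and $\J_E=\GL_1(E)(\A)$. The only difference is one of detail rather than of method: where the paper dismisses the surjectivity of $c:\J_E\to\G(\A)$ as ``easy to verify directly,'' you make it precise via Hilbert's Theorem 90 at each place together with the vanishing of $H^1(\mathcal{O}_v,\GL_1)=\Pic(\mathcal{O}_v)$ over smooth integral models at almost all places, which is a legitimate (if heavier) way to carry out exactly the restricted-product bookkeeping the paper leaves to the reader.
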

\begin{proof} The result follows easily by taking points over the ad{\`e}le ring $\A$ in \eqref{E:exact2}
and taking the natural identifications $\J_K=\GL_1(K)(\A)$ and $\J_E=\GL_1(E)(\A)$. The surjectivity of 
$c:\J_E\to \G(\A)$ is easy to verify directly.
\end{proof}

We let now $\J_E$ act on $\mathcal{L_\b v}$ via $c:\J_E\to \G(\A)$. It is clear from Proposition
\ref{P:trans} that this action is transitive. We shall now investigate the stabilizer of a lattice $L\in 
\mathcal{L_\b v}$ for this action. Let $B=A[\delta_L \b v]\subset E$.

\begin{proposition}\label{P:p-stab}
Let $L\in \mathcal{L_\b v}$ and let $\pp$ be a prime of $A$. Let $H_\pp=\{u\in E_\pp^\times\: : \:uL_\pp u^{-1}=
L_\pp\}$.
\begin{enumerate} \item\label{list2:1} If $\pp\nmid D$ then $H_\pp= K^\times_\pp B^\times_\pp$.
\item\label{list2:2} If $\pp \mid D$ then $H_\pp=E^\times_\pp$.
\end{enumerate}
\end{proposition}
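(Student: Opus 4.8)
The plan is to transport the condition $uL_\pp u^{-1}=L_\pp$ to the even Clifford order $\Lambda_\pp:=C_0(L_\pp)$ and then to intersect the quadratic algebra $E_\pp$ with the normalizer of $\Lambda_\pp$. First I would localize the two facts already proved for orders: the conjugation identity $C_0(uLu^{-1})=uC_0(L)u^{-1}$ (the computation preceding Theorem~\ref{T:surjectivity}) and the injectivity of $L\mapsto C_0(L)$ (Proposition~\ref{P:injectivity}), both of which hold verbatim over $A_\pp$. Together they give, for $u\in E_\pp^\times\subseteq C_0(V,Q)_\pp^\times$,
\[
uL_\pp u^{-1}=L_\pp \quad\Longleftrightarrow\quad u\Lambda_\pp u^{-1}=\Lambda_\pp,
\]
so that $H_\pp=E_\pp^\times\cap\mathcal N(\Lambda_\pp)$, where $\mathcal N$ denotes the normalizer in $C_0(V,Q)_\pp^\times$. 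I would then record one inclusion valid at every prime. Writing $\eta:=\delta_L\b v$, we have $\eta^2=\delta_L^2\b v^2=-Da=-aD$, so $E_\pp=K_\pp[\eta]=K_\pp(\sqrt{-aD})$ and $B_\pp=A_\pp[\eta]$. As $\eta$ is an even element of $C(L_\pp)$ it lies in $\Lambda_\pp$, whence $B_\pp\subseteq E_\pp\cap\Lambda_\pp$; since units of an order normalize it, this gives $K_\pp^\times B_\pp^\times\subseteq H_\pp$ unconditionally.

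Suppose $\pp\nmid D$. By Proposition~\ref{P:det} we have $\det\Lambda_\pp=D^2\in A_\pp^\times$, so $\Lambda_\pp$ is a maximal order and $C_0(V,Q)_\pp$ is split, $C_0(V,Q)_\pp\cong M_2(K_\pp)$. It is standard that the normalizer of a maximal order of $M_2(K_\pp)$ is $K_\pp^\times\Lambda_\pp^\times$, so $H_\pp=K_\pp^\times(E_\pp\cap\Lambda_\pp)^\times$ and everything reduces to showing that the embedding is optimal, i.e. $E_\pp\cap\Lambda_\pp=B_\pp$. This is the one real computation, and it is where primitivity of $\b v$ is used. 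Fix an orthogonal basis $\b e_1,\b e_2,\b e_3$ of $L_\pp$ and write $\b v=\sum_i v_i\b e_i$; as in Lemma~\ref{L:lattice recovery} one finds $\eta=u\bigl(v_1Q(\b e_1)\b e_2\b e_3-v_2Q(\b e_2)\b e_1\b e_3+v_3Q(\b e_3)\b e_1\b e_2\bigr)$ with $u\in A_\pp^\times$. Relative to the basis $\{1,\b e_1\b e_2,\b e_1\b e_3,\b e_2\b e_3\}$ of $\Lambda_\pp$, an element $x=\alpha+\beta\eta$ of $E_\pp$ lies in $\Lambda_\pp$ exactly when $\alpha\in A_\pp$ and $\beta v_iQ(\b e_i)\in A_\pp$ for each $i$. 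Since $\pp\nmid D$ and $\det L=Q(\b e_1)Q(\b e_2)Q(\b e_3)$ up to units, every $Q(\b e_i)$ is a unit, while primitivity of $\b v$ forces some $v_i$ to be a unit; for that index $\beta\in A_\pp$ follows, so $x\in A_\pp[\eta]=B_\pp$. Thus $E_\pp\cap\Lambda_\pp=B_\pp$ and $H_\pp=K_\pp^\times B_\pp^\times$.

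Suppose $\pp\mid D$. Then $\eta^2=-aD$ has odd valuation (because $\pp\nmid a$ and $D$ is square-free), so $E_\pp/K_\pp$ is a ramified quadratic field extension in which $\eta$ is a uniformizer; hence $B_\pp=A_\pp[\eta]=\mathcal O_{E_\pp}$ is the maximal order and $E_\pp^\times=\langle\eta\rangle\,B_\pp^\times$. By the inclusion noted above, $B_\pp^\times\subseteq H_\pp$, so it remains only to check $\eta\in H_\pp$. For this I would compute the action directly: since $\delta_L$ is central in $C(V,Q)$, conjugation by $\eta=\delta_L\b v$ agrees with conjugation by $\b v$, which on $V$ is $-\tau_{\b v}$, the negative of the reflection in $\b v$. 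As $Q(\b v)=a\in A_\pp^\times$ and $B(\b v,L_\pp)\subseteq A_\pp$, the reflection $\tau_{\b v}$ preserves $L_\pp$, hence so does $c_\eta$; thus $\eta\in H_\pp$ and $H_\pp=E_\pp^\times$.

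The main obstacle is the optimality statement $E_\pp\cap\Lambda_\pp=B_\pp$ at the primes $\pp\nmid D$: the reduction to orders and the computation at $\pp\mid D$ are formal once the normalizer of a maximal order is known, but the orthogonal-basis bookkeeping above is the only step that genuinely uses both hypotheses, that $\det L$ is square-free and that $\b v$ is primitive. I would expect any difficulty to be confined to verifying the coordinate formula for $\eta$ and the optimality argument.
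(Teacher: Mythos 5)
Your proof is correct. For $\pp\nmid D$ it is essentially the paper's argument with the implicit steps written out: the paper also passes through $\Lambda_\pp=C_0(L_\pp)$, notes that unit determinant (Proposition \ref{P:det}) makes it a maximal order in $C_0(V,Q)_\pp\simeq M_2(K_\pp)$, and intersects $E_\pp^\times$ with the normalizer $K_\pp^\times\Lambda_\pp^\times$; where the paper settles the optimality $E_\pp\cap\Lambda_\pp=B_\pp$ by citing the primitivity of $\delta_L\b v$ in $\Lambda_\pp$ (from the proof of Proposition \ref{C:prim}), you verify the same fact by the explicit coordinate computation in the basis $\{1,\b e_1\b e_2,\b e_1\b e_3,\b e_2\b e_3\}$ — same content, carried out by hand. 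For $\pp\mid D$ your route is genuinely different. The paper splits $L_\pp=A_\pp\b v\perp N_\pp$, identifies $E_\pp$ with $C_0(W_\pp)$ and $B_\pp$ with $C_0(N_\pp)$, makes $N_\pp$ a free rank-one $B_\pp$-module (citing Kneser's composition theory), and uses the identity $\b x u=\bbar{u}\b x$ to reduce the normalizing condition to $u\bbar{u}^{-1}\in B_\pp^\times$, which holds for every $u$ because norm-one elements of the ramified extension are units. You instead exploit the local field structure of $E_\pp$: since $\eta^2=-aD$ has odd valuation, $\eta$ is a uniformizer, $B_\pp=A_\pp[\eta]$ is the valuation ring, and $E_\pp^\times=\langle\eta\rangle B_\pp^\times$; units normalize automatically, and $\eta$ normalizes $L_\pp$ because conjugation by $\eta=\delta_L\b v$ equals conjugation by $\b v$ (centrality of $\delta_L$ in the full odd Clifford algebra $C(V,Q)$), which acts on $V$ as $-\tau_{\b v}$ and preserves $L_\pp$ since $Q(\b v)=a$ is a $\pp$-unit and $B(\b v,L_\pp)\subseteq A_\pp$. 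Your version is arguably more elementary, replacing the module-theoretic and involution arguments by structure theory of local fields plus the reflection formula, at the cost of checking a single generator rather than treating all of $E_\pp^\times$ uniformly; both arguments use the hypotheses in exactly the same places (primitivity of $\b v$ when $\pp\nmid D$; $\gcd(a,D)=1$ and square-freeness of $D$ when $\pp\mid D$).
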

\begin{proof} 

(\ref{list2:1}) Suppose $\pp\nmid D$. Then $\det C_0(L_\pp)$ is a $\pp$-unit by Proposition \ref{P:det}
and hence $C_0(L_\pp)$ is a maximal order in $C_0(V_\pp)\simeq M_2(K_\pp)$. Thus 
$H_\pp=(K^\times_\pp C_0(L_\pp)^\times)\cap E^\times_\pp=K^\times_\pp (C_0(L_\pp)^\times\cap E^\times_\pp)$. 
By the proof of Proposition \ref{C:prim}, $\delta \b v$ is primitive in $C_0(L_\pp)$, which implies $C_0(L_\pp)^\times\cap E^\times_\pp=
B^\times_\pp$.

(\ref{list2:2}) Suppose $\pp\mid D$. We have an orthogonal decomposition $V=K\b v\perp W$ and since $\pp\nmid a$ ($a$ and $D$ are assumed to be relatively prime),
we have a corresponding integral orthogonal decomposition
$L_\pp= A_\pp\b v\perp N_\pp$.  Hence we can identify $E_\pp$ with $C_0(W)$ and $B_\pp$ with $C_0(N_\pp)$.
Multiplication by $C_0(N_\pp)=B_\pp$ in $C(V,Q)$ makes $N_\pp$ into a $B_\pp$-module (see \cite{Kneser:1982lb})
 which is
free of rank one ($B_\pp$ is a maximal order since $D$ is square-free). It is easy to see by direct
computation that
for $u\in E_\pp=C_0(W_\pp)$ and $\b x\in W_\pp$ we have ${\b x} u= \bbar{u} \b x$, where $u\mapsto \bbar{u}$ is the 
canonical involution of $E_\pp$. 

For $u\in E_\pp^\times$ we have the following chain of equivalences
\[
\begin{aligned}
u \in H_\pp &\iff u N_\pp u^{-1}= N_\pp\\
&  \iff  u\bbar{u}^{-1} N_\pp=N_\pp\\
&  \iff  u\bbar{u}^{-1}\in B_\pp^\times.\\
\end{aligned}
\]

Since $B_\pp$ is ramified over $A_\pp$, all $u\in E_\pp^\times$ fulfill the last condition.

\end{proof}

We can now describe the stability subgroups for the action of $\J_E$ on $\mathcal{L_\b v}$.

\begin{proposition}\label{P:stab}
Let $L\in \mathcal{L_\b v}$ and let $S=\{\pp\: : \: \pp\mid D\}\cup\{\infty\}$. Then
\[
(\J_E)_L=\J_K (\prod_{\pp\in S} E_\pp^\times \times\prod_{\pp \not\in S} B_\pp^\times)
\]
\end{proposition}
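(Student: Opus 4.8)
The plan is to compute the stabilizer $(\J_E)_L$ by working locally at each prime and then assembling the global result, exploiting the fact that $(\J_E)_L = \prod_\pp H_\pp$ where $H_\pp = \{u \in E_\pp^\times : u L_\pp u^{-1} = L_\pp\}$ is exactly the local stabilizer computed in Proposition~\ref{P:p-stab}. Indeed, since $\J_E$ acts on $\mathcal{L}_{\b v}$ through $c:\J_E \to \G(\A)$, and the condition that an id\`ele $u = (u_\pp)_\pp$ fixes $L$ amounts to fixing $L_\pp$ at every prime simultaneously, the stabilizer factors as a restricted product of the $H_\pp$. Thus the entire content reduces to translating Proposition~\ref{P:p-stab} into the asserted product form.

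First I would address the finite primes. For $\pp \nmid D$, Proposition~\ref{P:p-stab}\eqref{list2:1} gives $H_\pp = K_\pp^\times B_\pp^\times$, and for $\pp \mid D$, part~\eqref{list2:2} gives $H_\pp = E_\pp^\times$. The prime at infinity must be handled separately: since $Q$ is definite, $\SO(V,Q)$ is anisotropic at $\infty$ and $E_\infty$ is a field (the infinite place is ramified in the quaternion algebra $C_0(V,Q)$). I would argue that at $\infty$ the stabilizer is all of $E_\infty^\times$, so that $\infty$ joins the primes dividing $D$ in the set $S$; this is consistent with the definition $S = \{\pp : \pp \mid D\} \cup \{\infty\}$. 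The key observation for assembling these is that the scalar factor $K_\pp^\times$ appearing at primes $\pp \nmid D$ can be pulled out globally: collecting the $K_\pp^\times$ factors over all $\pp \notin S$ together with the (full) scalar parts implicit at $\pp \in S$ reconstitutes the global id\`ele group $\J_K$ (embedded via $i$), leaving behind exactly $\prod_{\pp \in S} E_\pp^\times \times \prod_{\pp \notin S} B_\pp^\times$.

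The main step requiring care is verifying that this factorization is clean, i.e.\ that
\[
\J_K \Bigl(\prod_{\pp \in S} E_\pp^\times \times \prod_{\pp \notin S} B_\pp^\times\Bigr) = \prod_\pp H_\pp
\]
as subgroups of $\J_E$, respecting the restricted-product (integrality) conditions. Concretely, any $u \in \prod_\pp H_\pp$ has $u_\pp \in K_\pp^\times B_\pp^\times$ for almost all $\pp$; choosing a global scalar $\lambda \in \J_K$ matching the $K_\pp^\times$-components allows one to write $u = i(\lambda) u'$ with $u'_\pp \in B_\pp^\times$ for $\pp \notin S$ and $u'_\pp \in E_\pp^\times$ for $\pp \in S$. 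Conversely every element of the right-hand side manifestly stabilizes $L$ at each prime by Proposition~\ref{P:p-stab}. I expect the main obstacle to be bookkeeping at $\infty$: one must confirm that the local stabilizer there really is all of $E_\infty^\times$ and that the definiteness hypothesis guarantees $E_\infty$ behaves like the ramified (field) case rather than the split case, so that $\infty$ legitimately belongs to $S$. Once that is pinned down, the assembly is routine and the proposition follows.
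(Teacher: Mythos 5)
Your argument is correct and is, in essence, the paper's own proof: the paper disposes of this proposition in one line, as ``an immediate consequence of Proposition \ref{P:p-stab}'', and the content you supply---computing the stabilizer prime by prime and then pulling the scalar components out into $\J_K$---is exactly the assembly the paper leaves implicit. Your bookkeeping for that assembly is sound: at almost every prime $u_\pp$ is a unit of the maximal order of $E_\pp$, and such an element of $K_\pp^\times B_\pp^\times$ already lies in $B_\pp^\times$ (a scalar that is a unit of the maximal order of $E_\pp$ lies in $A_\pp^\times\subset B_\pp^\times$), so the scalar id\`ele you extract can indeed be taken in $\J_K$.

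The one point you should fix is the place at infinity, which you single out as the main obstacle; it is in fact a non-issue, and the justification you propose for it is misdirected. The reason $\infty$ belongs to $S$ is not that $Q$ is definite, nor that $C_0(V,Q)$ is ramified at $\infty$, nor that $E_\infty$ is a field. An $A$-lattice is determined by its localizations at the primes of $A$, and $\infty$ is not a prime of $A$; the natural action of $\SO(V,Q)(\A)$ on $A$-lattices (hence the action of $\J_E$ on $\mathcal{L}_{\b v}$) simply does not see the component at $\infty$. Thus the local stabilizer at $\infty$ is all of $E_\infty^\times$ for trivial reasons---and it would remain so even if $E_\infty$ were split. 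Trying to establish this by an analogue of part (ii) of Proposition \ref{P:p-stab} (ramification of $B_\pp$ over $A_\pp$) cannot work, because there is no lattice condition at $\infty$ for such an argument to trivialize. With that observation substituted for your ramification discussion, your proof coincides with the paper's.
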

\begin{proof} 
This is an immediate consequence of Proposition \ref{P:p-stab}.

\end{proof}

\begin{corollary}\label{C:pic}
The Picard group $\Pic(B[1/D])$  acts
simply transitively on the set $\mathcal{L_\b v}/E^\times$. In particular
\[
|\mathcal{L_\b v}/E^\times| = |\Pic(B[1/D])|.
\]

\end{corollary}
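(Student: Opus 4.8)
The plan is to deduce Corollary~\ref{C:pic} directly from Proposition~\ref{P:stab} by recognizing the double-coset description of the orbit as an id\`ele-class computation. Since $\J_E$ acts transitively on $\mathcal{L_\b v}$ (as noted before Proposition~\ref{P:stab}, following from Proposition~\ref{P:trans}), fixing a base lattice $L\in\mathcal{L_\b v}$ gives a bijection
\[
\mathcal{L_\b v}\;\simeq\; \J_E/(\J_E)_L,
\]
and by Proposition~\ref{P:stab} the stabilizer is $(\J_E)_L=\J_K\,U$, where $U=\prod_{\pp\in S}E_\pp^\times\times\prod_{\pp\not\in S}B_\pp^\times$. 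First I would quotient further by the diagonal action of $E^\times$ (embedded in $\J_E$ at the finite places in the usual way), so that
\[
\mathcal{L_\b v}/E^\times\;\simeq\; E^\times\backslash \J_E/(\J_K\,U).
\]
The goal is to identify this double-coset space with $\Pic(B[1/D])$.

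Next I would unwind the right-hand side by absorbing the $\J_K$ factor. The key point is that $\J_K\subset \J_E$ sits inside $U$ at every place \emph{outside} $S$ where $B_\pp^\times\supset A_\pp^\times=K_\pp^\times\cap B_\pp^\times$, but more importantly the subgroup $U$ together with $\J_K$ is exactly the subgroup of $\J_E$ cut out by the local orders $B_\pp$ away from $S$ and all of $E_\pp^\times$ on $S$. I would check that $\J_K\,U=\prod_{\pp\in S}E_\pp^\times\times\prod_{\pp\not\in S}B_\pp^\times\cdot K_\pp^\times$; since for $\pp\not\in S$ the localization $B_\pp$ is the maximal order $A_\pp[\delta_L\b v]_\pp$ and $B_\pp^\times K_\pp^\times$ is precisely the group of units of the order $B[1/D]$ completed at $\pp$, this rewrites the open subgroup as the one defining the ring $B[1/D]$. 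Concretely, inverting $D$ means discarding the places dividing $D$, and the places in $S\setminus\{\infty\}$ are exactly those; the place at infinity is handled because $E$ is a field (a quadratic extension of $K$) in which $\infty$ does not split, by Lemma~\ref{L:eqconditions}, so $E_\infty^\times$ contributes trivially to the class group of $B[1/D]$.

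I would then invoke the standard adelic description of the Picard group of a one-dimensional order: for the order $R=B[1/D]$ with completions $R_\pp$, one has
\[
\Pic(R)\;\simeq\; E^\times\backslash \J_E^{(D)}\big/\prod_{\pp}R_\pp^\times,
\]
where $\J_E^{(D)}$ denotes id\`eles supported away from the discarded places. Matching the open compact subgroup $\prod_\pp R_\pp^\times$ with $U$ (modulo the central $\J_K$ already absorbed) gives the isomorphism of the double-coset space with $\Pic(B[1/D])$, and the simple transitivity of the residual $E^\times$-action is then the statement that distinct double cosets correspond to distinct ideal classes. The cardinality statement $|\mathcal{L_\b v}/E^\times|=|\Pic(B[1/D])|$ is an immediate consequence.

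The main obstacle will be the bookkeeping at the ramified places $\pp\in S$ and especially at $\infty$: one must verify that allowing the full local unit group $E_\pp^\times$ there (rather than $B_\pp^\times$) does not change the class-group count, which amounts to checking that $E_\pp^\times=B_\pp^\times K_\pp^\times$ at such primes. For finite $\pp\mid D$ this follows because $B_\pp$ is the maximal (ramified) order, so $E_\pp^\times/K_\pp^\times B_\pp^\times$ is trivial---exactly the computation already carried out in part~(\ref{list2:2}) of Proposition~\ref{P:p-stab}. At $\infty$ one uses that $E$ is an imaginary quadratic extension so that $E_\infty$ is a field and its unit group modulo $K_\infty^\times$ and the maximal order units is again trivial. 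Once these local identifications are in place, the identification of the double-coset space with $\Pic(B[1/D])$ is purely formal.
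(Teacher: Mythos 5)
Your setup is the same as the paper's: Proposition~\ref{P:trans} (transitivity of the $\J_E$-action) and Proposition~\ref{P:stab} identify $\mathcal{L}_{\b v}/E^\times$ with the double-coset space $E^\times\backslash \J_E/\bigl(\J_K U\bigr)$, where $U=\prod_{\pp\in S}E_\pp^\times\times\prod_{\pp\not\in S}B_\pp^\times$. The gap is in how you dispose of the factor $\J_K$. Your description of $\J_K U$ as the restricted product $\prod_{\pp\in S}E_\pp^\times\times\prod_{\pp\not\in S}B_\pp^\times K_\pp^\times$ is fine, but the next claim --- that for $\pp\not\in S$ the group $B_\pp^\times K_\pp^\times$ ``is precisely the group of units of the order $B[1/D]$ completed at $\pp$'' --- is false. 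The completion of $B[1/D]$ at a prime $\pp\nmid D$ is just $B_\pp$, whose unit group is $B_\pp^\times$; the group $K_\pp^\times B_\pp^\times$ strictly contains it, since it contains the uniformizers of $K_\pp$. Hence $\J_K U$ is strictly larger than the open subgroup $U$ occurring in the adelic description $\Pic(B[1/D])\simeq \J_E/E^\times U$, and your place-by-place absorption of $\J_K$ does not go through. What your argument would actually yield, if carried out, is the quotient of $\Pic(B[1/D])$ by the image of $\J_K$, i.e.\ by the classes of ideals extended from $A[1/D]$; showing that this image is trivial is exactly the point that still needs proof. Note also that your ``main obstacle'' paragraph worries about the places in $S$ and at $\infty$, but those places are harmless: both descriptions allow all of $E_\pp^\times$ there. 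The genuine subtlety sits at the places \emph{outside} $S$.

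The missing ingredient is global, not local: one must use that $A$ is a PID. This gives the decomposition $\J_K= K^\times \bigl(K_\infty^\times\times \prod_{\pp\neq \infty} A^\times_\pp\bigr)$, and since $K^\times\subset E^\times$, $K_\infty^\times\subset E_\infty^\times$ (recall $\infty\in S$), $A_\pp^\times\subset E_\pp^\times$ for the finite $\pp\in S$, and $A_\pp^\times\subset B_\pp^\times$ for $\pp\not\in S$, it follows that $\J_K\subset E^\times U$, whence $E^\times \J_K U=E^\times U$ and the double-coset space is $\J_E/E^\times U\simeq \Pic(B[1/D])$. (Equivalently: every fractional ideal of the PID $A[1/D]$ is principal, so its extension to $B[1/D]$ is principal, and the image of $\J_K$ in $\Pic(B[1/D])$ is trivial.) This single displayed inclusion is the entire content of the paper's proof, and it is precisely the step your proposal replaces by an incorrect local identification; once it is inserted, the rest of your argument is correct.
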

\begin{proof} It follows from Propositions \ref{P:trans} and \ref{P:stab} that $\J_E/E^\times(\J_E)_L$
acts simply transitively on $\mathcal{L_\b v}/E^\times$. Since $A$ is a PID,
we have
\[
\J_K= K^\times (K_\infty\times \prod_{\pp\neq \infty} A^\times_\pp)\subset E^\times  (\prod_{\pp\in S} E_\pp^\times \times \prod_{\pp \not\in S} B_\pp^\times).
\]

Using the inclusion above and Proposition \ref{P:stab}, we get

\[
E^\times(\J_E)_L= E^\times (\prod_{\pp\in S} E_\pp^\times \times \prod_{\pp \not\in S} B_\pp^\times),
\]
which proves the Corollary.

\end{proof}

\section{Weighted sum of representation numbers}

Let $L_1,L_2,\ldots,L_h\in \mathcal{L_\b v}$ be representatives of all isometry classes in $\mathcal{L_\b v}$. 
Let
\[
R_i=\{\b x\in L_i\: : \:Q(\b x)=a\ \mathrm{and}\ \b x\ \mathrm{is\ primitive\ in}\ L_i\}.
\]
We define a map $\psi_i : R_i\to \mathcal{L_\b v}/E^\times$ as follows: for $\b x\in R_i$, choose $\sigma\in SO(V,Q)$
such that $\sigma(\b x)=\b v$ and set
\begin{equation}\label{E:psi}
\psi_i(\b x)= (\sigma L_i),
\end{equation}
where $(\sigma L_i)$ denotes the image of $\sigma L_i$ in $\mathcal{L_\b v}/E^\times$. It is readily checked
that $(\sigma L_i)$ does not depend on the choice of $\sigma$. Indeed, if $\sigma,\tau \in SO(V,Q)$ are such
that $\sigma(\b x)= \tau(\b x)=\b v$, then $\sigma \tau^{-1} = c_u$ for some $u\in E^\times$ and hence
$\sigma L_i= c_u \tau L_i$. This shows $(\sigma L_i)=(\tau L_i)$.

The group $SO(L_i)$ acts naturally on $R_i$ and we denote by $R_i/SO(L_i)$ the orbit space for this action.

\begin{proposition}\label{P:cover}
Let $\psi_i : R_i\to \mathcal{L_\b v}/E^\times$ ($i=1,\ldots,h$) be the maps defined in \eqref{E:psi}.
\begin{enumerate}
\item\label{list3:1} Each $\psi_i$ induces an injective map
${\bbar\psi}_i : R_i/SO(L_i)\hookrightarrow \mathcal{L_\b v}/E^\times$.
\item\label{list3:2} $\mathcal{L_\b v}/E^\times$ is the disjoint union of the images $\psi_i(R_i)$ ($i=1,\ldots,h$).
\end{enumerate}
\end{proposition}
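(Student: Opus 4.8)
The plan is to prove both statements simultaneously by exploiting the simply transitive action of $\Pic(B[1/D])$ on $\mathcal{L_\b v}/E^\times$ established in Corollary \ref{C:pic}. The key conceptual point is that the maps $\psi_i$ translate the question ``which lattices in $\mathcal{L_\b v}$ arise, up to the $E^\times$-action, from primitive representations'' into a statement purely about isometry classes in the genus. First I would verify well-definedness once more in the form needed here: $\psi_i$ sends $R_i$ into $\mathcal{L_\b v}/E^\times$ because if $\sigma(\b x)=\b v$ with $\b x$ primitive of norm $a$ in $L_i$, then $\sigma L_i$ is an integral lattice of determinant $D$ containing $\b v=\sigma(\b x)$ primitively, hence $\sigma L_i\in\mathcal{L_\b v}$; the independence of the class $(\sigma L_i)$ from the choice of $\sigma$ has already been checked in the text via the exact sequence \eqref{E:exact1}.

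For injectivity of $\bbar\psi_i$ (statement \ref{list3:1}), I would show that $\psi_i(\b x)=\psi_i(\b y)$ in $\mathcal{L_\b v}/E^\times$ forces $\b x$ and $\b y$ to lie in the same $SO(L_i)$-orbit. Choose $\sigma,\tau\in SO(V,Q)$ with $\sigma(\b x)=\b v=\tau(\b y)$. The hypothesis $\psi_i(\b x)=\psi_i(\b y)$ means $\sigma L_i=c_u\,\tau L_i$ for some $u\in E^\times$, i.e. $(c_u\tau)^{-1}\sigma L_i=L_i$; writing $\rho=(c_u\tau)^{-1}\sigma$ we get $\rho\in SO(L_i)$. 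Since $c_u$ fixes $\b v$ (as $u\in E^\times=Z_{C_0(V,Q)}(\b v)^\times$), we compute $\rho(\b x)=(c_u\tau)^{-1}\sigma(\b x)=(c_u\tau)^{-1}(\b v)=\tau^{-1}c_u^{-1}(\b v)=\tau^{-1}(\b v)=\b y$. Thus $\b x$ and $\b y$ are in the same orbit, giving injectivity.

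For statement \ref{list3:2}, disjointness and surjectivity, I would first argue that distinct $\psi_i$ have disjoint images: if $\psi_i(\b x)=\psi_j(\b y)$ then, choosing $\sigma,\tau$ as above, one finds $\tau^{-1}c_u^{-1}\sigma$ is an isometry carrying $L_i$ onto $L_j$, contradicting that $L_i,L_j$ are inequivalent representatives unless $i=j$. For surjectivity, take any class $(M)\in\mathcal{L_\b v}/E^\times$ with $M\in\mathcal{L_\b v}$. Since $M$ lies in the genus, it is isometric to some representative $L_i$, so $M=\sigma L_i$ for a suitable $\sigma\in SO(V,Q)$; then $\b x:=\sigma^{-1}(\b v)$ is a primitive vector of norm $a$ in $L_i$, so $\b x\in R_i$ and by construction $\psi_i(\b x)=(\sigma L_i)=(M)$.

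The main obstacle I anticipate is the surjectivity step, specifically ensuring that every $M\in\mathcal{L_\b v}$ is $SO(V,Q)$-equivalent (rather than merely $O(V,Q)$-equivalent) to one of the chosen representatives $L_i$, and that the $E^\times$-action is precisely the right quotient to absorb the ambiguity in $\sigma$. The care needed is that the $L_i$ are representatives of \emph{isometry} classes in $\mathcal{L_\b v}$ and that the group acting is $SO$; one must confirm that passing to $\mathcal{L_\b v}/E^\times$ does not identify lattices coming from genuinely different orbits, which is exactly what the simple transitivity in Corollary \ref{C:pic} guarantees and what makes the disjointness argument clean.
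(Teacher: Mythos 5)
Your proof is correct and is essentially the paper's own argument: your injectivity step (writing $\rho=(c_u\tau)^{-1}\sigma$ and using that $c_u$ fixes $\b v$) is exactly the paper's ``replace $\tau$ by $c_u\tau$'' device, and your treatment of part (ii) amounts to the paper's observation that $\psi_i(R_i)$ consists precisely of the classes represented by lattices isometric to $L_i$ --- note that Corollary \ref{C:pic}, despite your opening framing, is never actually used in your argument, nor in the paper's. The one point you flag as a potential obstacle, upgrading an isometry $M\cong L_i$ to a \emph{proper} isometry, is not something Corollary \ref{C:pic} addresses; it is settled simply because $-\mathrm{id}_V$ preserves every lattice and has determinant $(-1)^3=-1$ in rank three, so isometric ternary lattices are automatically properly isometric.
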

\begin{proof}
(\ref{list3:1}) It is clear from the definition \eqref{E:psi} of $\psi_i$ that $\psi_i(\rho \b x)= \psi_i(\b x)$ for $\rho\in
SO(L_i)$, so $\psi_i$ induces a map ${\bbar\psi}_i : R_i/SO(L_i)\to \mathcal{L_\b v}/E^\times$. We shall see that this map is injective. Suppose $\psi_i(\b x)=\psi_i(\b y)$ and let $\sigma,\tau\in SO(V,Q)$ be such that $\sigma(\b x)=\tau(\b y)=\b v$. Then, by the definition of $\psi_i$, we have $\sigma L_i=c_u \tau L_i$ for some $u\in E^\times$. Replacing
 $\tau$ by $c_u \tau$ does not affect the condition $\tau(\b y)=\b v$, so we can assume that $\sigma L_i=\tau L_i$.
 Thus $\tau^{-1}\sigma \in SO(L_i)$ and $\tau^{-1}\sigma(\b x)=\b y$. 
 
 (\ref{list3:2}) It is enough to observe that each of the sets $\psi_i(R_i)$ consists exactly of the classes of $\mathcal{L_\b v}/E^\times$ represented by lattices isometric to $L_i$, so these sets are pairwise disjoint. Since $L_1,\ldots,L_h$ represent
 all isometry classes in $\mathcal{L_\b v}$, their union is $\mathcal{L_\b v}/E^\times$.
\end{proof}

\begin{corollary}\label{C:gauss1} Let $\rho_i=|R_i/SO(L_i)|$, where $i=1,\ldots,h$. Then
\begin{equation}\label{E:gauss1}
\sum_{i=1}^h \rho_i =|\Pic(B[1/D])|,
\end{equation}
where $B=A[\delta\b v]\simeq A[\sqrt{-a D}]$.

\end{corollary}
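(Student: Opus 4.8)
The plan is to derive \eqref{E:gauss1} as a direct counting consequence of the two structural results already in hand: Proposition \ref{P:cover}, which organizes the primitive representations $R_i$ (modulo the action of $SO(L_i)$) inside the orbit space $\mathcal{L_\b v}/E^\times$, and Corollary \ref{C:pic}, which identifies the cardinality of that orbit space with $|\Pic(B[1/D])|$. Essentially all the work has been front-loaded into those two statements, so the corollary is obtained by fitting them together.

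First I would record the cardinality of each constituent piece. By Proposition \ref{P:cover}(\ref{list3:1}) the induced map $\bar\psi_i : R_i/SO(L_i)\hookrightarrow \mathcal{L_\b v}/E^\times$ is injective, so its image $\psi_i(R_i)$ has exactly $\rho_i=|R_i/SO(L_i)|$ elements. Next, by Proposition \ref{P:cover}(\ref{list3:2}), the orbit space $\mathcal{L_\b v}/E^\times$ is the disjoint union of the images $\psi_i(R_i)$ for $i=1,\ldots,h$. Summing cardinalities over this partition gives
\[
|\mathcal{L_\b v}/E^\times| = \sum_{i=1}^h |\psi_i(R_i)| = \sum_{i=1}^h \rho_i.
\]
Combining this with the equality $|\mathcal{L_\b v}/E^\times| = |\Pic(B[1/D])|$ from Corollary \ref{C:pic} yields \eqref{E:gauss1}.

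It remains to justify the description $B=A[\delta_L\b v]\simeq A[\sqrt{-aD}]$ asserted in the statement. Here I would simply compute the square of the generator: since $\delta_L$ is central in $C(L)$ it commutes with $\b v$, and using $\b v^2=Q(\b v)=a$ together with $\delta_L^2=-\det L=-D$, one finds $(\delta_L\b v)^2=\delta_L^2\b v^2=-aD$. Thus $\delta_L\b v$ is a square root of $-aD$ and $B\simeq A[\sqrt{-aD}]$. I would also note that, by the lemma comparing $\delta_L$ for lattices of equal determinant, the order $B$ is independent of the representative $L\in\mathcal{L_\b v}$: all such lattices have determinant $D$, so the generators differ only by a factor in $\F_q^\times$, and $A[\delta_L\b v]$ is unchanged.

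The argument presents essentially no obstacle beyond careful bookkeeping. The only points meriting attention are (i) that the injectivity in Proposition \ref{P:cover}(\ref{list3:1}) is precisely what licenses equating $|\psi_i(R_i)|$ with $\rho_i$, and (ii) the harmless verification that $B$ does not depend on the chosen representative lattice, so that the right-hand side of \eqref{E:gauss1} is well defined. Neither requires new ideas; the substance lives entirely in the preceding propositions.
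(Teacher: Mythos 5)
Your proof is correct and follows the same route as the paper, which simply cites Proposition \ref{P:cover} and Corollary \ref{C:pic}; your counting argument (injectivity of $\bar\psi_i$ gives $|\psi_i(R_i)|=\rho_i$, the disjoint union gives the sum, and Corollary \ref{C:pic} identifies the total) is exactly the intended deduction. The added verification that $(\delta_L\b v)^2=-aD$ and that $B$ is independent of the representative lattice is a sound, if optional, piece of bookkeeping consistent with the paper's definition of $B$ in Section \ref{S:action on x}.
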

\begin{proof} It follows immediately from Corollary \ref{C:pic} and Proposition \ref{P:cover}.
\end{proof}

\begin{lemma}\label{L:freeaction}
\begin{enumerate} 
\item\label{list4:1} If $\deg (a)>0$ then the group $SO(L_i)$ acts freely on $R_i$.
\item\label{list4:2} If $\deg (a)=0$ then the stabilizers $SO(L_i)_{\b x}$ have order $2$ for all ${\b x}\in R_i$.
\end{enumerate}

\end{lemma}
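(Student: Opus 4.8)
The plan is to compute the stabilizer of a vector $\b x\in R_i$ inside the full pointwise stabilizer $SO(V,Q)_{\b x}$, and then to decide which of its elements preserve $L_i$. First I would identify $SO(V,Q)_{\b x}$: exactly as in the exact sequence~\eqref{E:exact2}, an isometry $c_u$ fixes $\b x$ if and only if $u$ lies in the commutative subalgebra $E=Z_{C_0(V,Q)}(\b x)\cong K(\sqrt{-a D})$, so that $c$ induces an isomorphism $SO(V,Q)_{\b x}\cong E^{\times}/K^{\times}$. Composing with the Hilbert~90 isomorphism $E^{\times}/K^{\times}\cong E^{(1)}$ onto the norm-one elements realizes $SO(V,Q)_{\b x}$ as the $K$-points of the norm-one torus of the quadratic extension $E/K$.

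Next I would bound this stabilizer. Since $(V,Q)$ is definite, $SO(L_i)$ is a finite group, so every element of $SO(L_i)_{\b x}$ has finite order and hence corresponds to a norm-one root of unity in $E$. The field of constants of $E=K(\sqrt{-aD})$ is just $\F_q$: indeed $C_0(V,Q)$ is ramified at $\infty$ and at an even total number of places, hence at some finite prime $\pp$, which divides $D$; as $D$ is square-free and prime to $a$ we get $\ord_\pp(-aD)=1$, and this odd valuation prevents $-aD$ from being a constant times a square in $K$. Consequently the roots of unity of $E$ are exactly $\F_q^{\times}$, and those of norm one are $\{\pm1\}$. Therefore $SO(L_i)_{\b x}\subseteq\{\mathrm{id},\iota\}$, where $\iota$ is the image of $-1$, namely the half-turn $\iota=c_{\delta_{L_i}\b x}$ fixing $\b x$ and acting by $-1$ on $\b x^{\perp}$, explicitly $\iota(\b y)=\frac{2B(\b x,\b y)}{a}\b x-\b y$.

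It remains to decide when $\iota\in SO(L_i)$; this step separates the two cases and is the heart of the argument. Using that $\b x$ is primitive in $L_i$, the condition $\iota L_i=L_i$ is equivalent to $\frac{2B(\b x,\b y)}{a}\in A$ for every $\b y\in L_i$. I would then check prime by prime that the linear form $\b y\mapsto B(\b x,\b y)$ maps $L_i$ onto $A$: at primes $\pp\nmid D$ the localization $L_{i,\pp}$ is unimodular with $\b x$ primitive, while at $\pp\mid D$ one has an orthogonal splitting $L_{i,\pp}=A_\pp\b x\perp N_\pp$ in which $a=Q(\b x)$ is a unit, so in both cases the form is surjective. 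Since $2\in\F_q^{\times}$ is a unit, the divisibility $a\mid 2B(\b x,\b y)$ for all $\b y$ then holds if and only if $a\in\F_q^{\times}$, i.e. $\deg a=0$. Hence $\iota\in SO(L_i)$ exactly when $\deg a=0$: in that case the stabilizer is $\{\mathrm{id},\iota\}$ of order $2$, proving~(\ref{list4:2}), while for $\deg a>0$ the stabilizer is trivial, proving~(\ref{list4:1}). The main obstacle is the bounding step: ruling out higher-order torsion rests on the constant-field computation, and one must verify that the single surviving involution $\iota$ is genuinely non-trivial and correctly identified with $-1$ before testing it against the lattice.
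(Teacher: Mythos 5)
Your proof is correct---each step that needs justification is supplied---but it takes a genuinely different route from the paper's. The paper never leaves elementary lattice theory: it embeds $SO(L_i)_{\b x}$ into $SO(M)$, where $M=L_i\cap(K\b x)^{\perp}$ is a definite binary lattice, observes that $M$ cannot be unimodular (else $A\b x$ would be an orthogonal factor of $L_i$, making $a$ and $D$ differ by a unit, against coprimality), quotes as an easy exercise that a definite binary $A$-lattice of non-constant determinant has $SO(M)=\{\pm 1_M\}$, and then decides whether the nontrivial candidate lies in $SO(L_i)$ by an idempotent argument: a nontrivial $\sigma\in SO(L_i)_{\b x}$ gives idempotents $e=(1+\sigma)/2$, $f=(1-\sigma)/2$, an orthogonal splitting $L_i=eL_i\perp fL_i$ with $eL_i=A\b x$ by primitivity, hence $a\mid D$, which forces $\deg a=0$. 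You instead identify the ambient stabilizer $SO(V,Q)_{\b x}$ with the norm-one torus of $E=K(\sqrt{-aD})$ via \eqref{E:exact2} and Hilbert~90, bound its torsion by norm-one roots of unity---which obliges you to prove that the field of constants of $E$ is $\F_q$, using ramification parity of $C_0(V,Q)$ to produce a prime $\pp\mid D$ with $\ord_\pp(-aD)$ odd---and then test the surviving half-turn $\iota$ against $L_i$ through the criterion $a\mid 2B(\b x,L_i)$, settled by the local surjectivity of $B(\b x,\cdot)\colon L_i\to A$. The endgames are secretly the same computation: your projection $\b y\mapsto a^{-1}B(\b x,\b y)\,\b x$ is exactly the paper's idempotent $e$, and both tests come down to whether $A\b x$ splits off orthogonally from $L_i$. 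Where the two proofs genuinely diverge is the bounding step: yours is self-contained where the paper defers to an unproved exercise on binary lattices, and it makes transparent the only possible failure mode of the torsion bound (a constant-field extension, excluded by the odd valuation at $\pp$); the price is heavier machinery---the Clifford torus, Hilbert~90, and the parity of ramification of the quaternion algebra---where the paper's binary-complement reduction stays entirely elementary.
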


\begin{proof} (\ref{list4:1}) Set $L=L_i$ for simplicity of the notation. We have a natural embedding $SO(L)_\b v\hookrightarrow SO(M)$. Notice that $M$ cannot be 
unimodular since in this case $A\b v$ would be an orthogonal factor of $L$, which would cause $a$ and $D$ 
to differ by a unit of $A$, contradicting that $a$ and $D$ are relatively prime. So $M$ is a primitive 
definite binary lattice over $A$ with $\deg(\det M)>0$. It is an easy exercise to show that in this case 
$ SO(M)=\{\pm 1_M\}$. Hence $|SO(L)_{\b v}|\le 2$.

(\ref{list4:2}) Suppose that $a$ is a unit. Then $L=A\b v\perp M$ and $SO(L)_{\b v}\simeq  SO(M)$. Conversely, if $SO(L)_{\b v}\simeq  SO(M)$,
let $\sigma$ be the nontrivial element of $SO(L)_{\b v}$ and let $e = (1 + \sigma)/2$ and $f = (1-\sigma)/2$. It is clear that $e$, $f$ are idempotents and $e + f = 1$.
 Since $2$ is invertible in $A$, we have $L = e L + f L$ and this decomposition is orthogonal since $\sigma$ is an isometry. Also $A\b v$ is contained in
 $e L$, but since $\b v$ is primitive we must have $A\b v = e L$. Hence $A\b v$ is an orthogonal factor of $L$, which implies
 that $a$ divides $D$. Since $a$ and $D$ are relatively prime, this is possible only if $a$ is a unit.

\end{proof}

\begin{corollary}\label{C:rho} The numbers $\rho_i$ of Corollary \ref{C:gauss1} are given by
\[
\rho_i=\begin{cases} \displaystyle \frac{|R_i|}{|SO(L_i)|}\quad \textrm{if}\quad \deg(a)>0\\
\\
\displaystyle \frac{2 |R_i|}{|SO(L_i)|}\quad \textrm{if}\quad \deg(a)=0.\\
\end{cases}
\]
\end{corollary}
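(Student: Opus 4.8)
The plan is to read off both formulas from the orbit--stabilizer theorem, using Lemma~\ref{L:freeaction} to pin down the common order of the point stabilizers. First I would record the two finiteness facts that make the counting meaningful: the set $R_i$ is finite, and the group $SO(L_i)$ is finite because $Q$ is definite. Anisotropy of $Q$ over $K_\infty$ forces $O(L_i)$ to sit as a discrete subgroup of a compact group, hence to be finite; this is the one place where definiteness is used, and it guarantees that the orbit counts below are finite integers. Consequently $R_i$ is partitioned into exactly $\rho_i=|R_i/SO(L_i)|$ orbits under the action of $SO(L_i)$.

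Next I would apply the orbit--stabilizer theorem: the orbit of $\b x\in R_i$ has cardinality $|SO(L_i)|/|SO(L_i)_{\b x}|$. By Lemma~\ref{L:freeaction} the order of the stabilizer $SO(L_i)_{\b x}$ does not depend on $\b x$, so all orbits have the same size and $|R_i|$ equals $\rho_i$ times this common size. When $\deg(a)>0$ the action is free, so the common orbit size is $|SO(L_i)|$ and summing over the $\rho_i$ orbits gives $|R_i|=\rho_i\,|SO(L_i)|$. When $\deg(a)=0$ each stabilizer has order $2$, so the common orbit size is $|SO(L_i)|/2$ and summing gives $|R_i|=\rho_i\,|SO(L_i)|/2$. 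Solving for $\rho_i$ in each case produces exactly the two displayed formulas.

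I expect no genuine obstacle here: the argument is a bookkeeping application of orbit--stabilizer, and all the substantive work has already been carried out in Lemma~\ref{L:freeaction}. The only step that deserves an explicit sentence is the finiteness of $SO(L_i)$, since without it the numerical identities $|R_i|=\rho_i\,|SO(L_i)|$ and $|R_i|=\rho_i\,|SO(L_i)|/2$ would be vacuous; everything else is immediate.
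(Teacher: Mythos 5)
Your proof is correct and follows exactly the paper's route: the paper's own proof is the one-line observation that the result ``follows immediately from Lemma~\ref{L:freeaction} and the orbit-stabilizer theorem,'' which is precisely the bookkeeping you carry out. Your added remark on the finiteness of $SO(L_i)$ (forced by definiteness of $Q$) is a reasonable explicit supplement, but it is implicit in the paper's setting and does not constitute a different argument.
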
 
\begin{proof}
The result follows immediately from the Lemma \ref{L:freeaction} and the orbit-stabilizer theorem.
\end{proof}
We shall now describe the right-hand side of \eqref{E:gauss1}. Recall that $B\simeq A[\lambda]$, where $\lambda^2=-aD$.
To simplify the notation, let $C=B[1/D]$. There is a natural 
map $\Pic(B)\to \Pic(C)$, which is readily seen to be surjective. We shall investigate its kernel.

\begin{lemma}\label{L:units} With the notation above:

\begin{enumerate} 
\item If $\deg(a)>0$ then $C^\times=A[1/D]^\times.$ 
\item If $\deg(a)=0$ then $C^\times=A[1/D]^\times \langle \lambda \rangle$, where $\langle \lambda \rangle$
is the multiplicative subgroup generated by $\lambda$.
\end{enumerate}
\end{lemma}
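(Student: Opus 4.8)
The plan is to reduce everything to the norm form. Writing $C=A[1/D]\oplus A[1/D]\lambda$ with $\lambda^2=-aD$ and $\bbar\lambda=-\lambda$, every $u=f+g\lambda\in C$ satisfies $N(u)=u\bbar u=f^2+aD\,g^2\in A[1/D]$. Since $C$ is stable under the canonical involution and $C\cap K=A[1/D]$, one checks that $u\in C^\times$ if and only if $N(u)\in A[1/D]^\times$, the inverse being $\bbar u/N(u)$. So the task is to describe the pairs $(f,g)$ with $f,g\in A[1/D]$ for which $f^2+aD\,g^2$ is a $D$-unit. First I would normalize: multiplying $u$ by a suitable element of $A[1/D]^\times$ (a product of powers of the primes dividing $D$) changes neither its class modulo $A[1/D]^\times$ nor its membership in $A[1/D]^\times\langle\lambda\rangle$, so I may assume $f=F$ and $g=G$ lie in $A$ with $\min(\ord_\pp F,\ord_\pp G)=0$ for every $\pp\mid D$. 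Set $H=F^2+aD\,G^2\in A$; by construction every irreducible factor of $H$ divides $D$.

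The heart of the argument is then two degree estimates for $H$, carried out under the assumption $G\neq 0$. At a \emph{finite} prime $\pp\mid D$ the quantity $\ord_\pp(F^2)$ is even while $\ord_\pp(aD\,G^2)=1+2\,\ord_\pp(G)$ is odd (because $\pp\nmid a$ and $D$ is square-free), so no cancellation occurs and $\ord_\pp H=\min(2\,\ord_\pp F,\,1+2\,\ord_\pp G)$; our normalization $\min(\ord_\pp F,\ord_\pp G)=0$ forces this minimum to be $0$ or $1$, whence $\ord_\pp H\le 1$ and therefore $\deg H\le\deg D$. At the \emph{infinite} place the leading terms of $F^2$ and $aD\,G^2$ cannot cancel: such a cancellation would exhibit $-\,\mathrm{lc}(aD)$ as a square in $\F_q^\times$ when $\deg(aD)$ is even, and is impossible on parity grounds when $\deg(aD)$ is odd, and in either case this is precisely the statement that $-aD$ is a square in $K_\infty$, which is excluded by the definiteness hypothesis (Lemma~\ref{L:eqconditions}, condition (\ref{list:5})). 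Hence $\deg H=\max(2\deg F,\ \deg(aD)+2\deg G)\ge \deg a+\deg D$, using $\deg G\ge 0$.

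Combining the two estimates gives $\deg a+\deg D\le \deg H\le\deg D$. If $\deg a>0$ this is a contradiction, so $G=0$ and $u\in A[1/D]^\times$, which proves part (i). If $\deg a=0$ the inequalities are forced to be equalities: $G$ is a nonzero constant and $H=cD$ with $c\in\F_q^\times$, so $F^2=(c-aG^2)D$; since $D$ is square-free of positive degree this is possible only if $c-aG^2=0$ and $F=0$, giving $u=G\lambda\in\F_q^\times\lambda$. Together with the case $G=0$ this yields $C^\times=A[1/D]^\times\langle\lambda\rangle$, once one observes that $\lambda\in C^\times$ when $a\in\F_q^\times$ (indeed $\lambda^2=-aD\in A[1/D]^\times$), proving part (ii).

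The step I expect to be the main obstacle is the infinite-place non-cancellation, as this is exactly the point where definiteness enters. The delicate part is handling the even and odd cases of $\deg(aD)$ uniformly, through the description of squares in $K_\infty=\F_q((t^{-1}))$ in terms of parity of the valuation and the leading coefficient; everything else is a direct consequence of the norm criterion and the normalization of $(F,G)$.
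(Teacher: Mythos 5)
Your proof is correct and follows essentially the same route as the paper's: both pass through the norm criterion ($u\in C^\times$ iff $N(u)=F^2+aD\,G^2$ is a unit of $A[1/D]$), the same normalization $\min(\ord_\pp F,\ord_\pp G)=0$ for $\pp\mid D$, square-freeness of $D$ to bound $N(u)$ at the finite primes, and definiteness (non-cancellation of leading terms at $\infty$, i.e.\ $-aD\notin (K_\infty^\times)^2$) to bound it from below. Your valuation-theoretic degree inequality $\deg a+\deg D\le \deg H\le \deg D$ is just a repackaging of the paper's step of dividing out $e=N(z)$ and observing that the definite binary form $e x_1^2+aD_1y^2$ can represent $1$ only trivially.
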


\begin{proof} Let $z\in C^\times$. Multiplying $z$ by a suitable unit of $A[1/D]$, we can assume
without loss of generality that $z$ is of the form $z=x+y\lambda$ with $x,y\in A$ and 
$\gcd(x,y,D)=1$. Since $z$ is a unit, its norm $e:=N(z)=x^2+a D y^2$ is a product of primes 
dividing $D$. If $p$ is a prime divisor of $e$,  then $p$ divides $x$ and we get the equation 
$ p x_0^2+aD_0 y^2=e_0$, where $x_0=x/p$, $D_0=D/p$ and $e_0=e/p$. If $p\mid e_0$, then $p\mid D_0$
since $a$ and $D$ are relatively prime, but this is a contradiction with the fact that $D$ is square-free.
Hence $e$ is square-free and therefore $e$ divides $D$. Letting $x_1=x/e$ and $D_1=D/e$ in the
original equation, we get $e x_1^2 +a D_1 y^2=1$. Since the form $e x_1^2 +a D_1 y^2$ is definite 
($-aD$ is not a square in $K_\infty$) we have that either $e$ and $x_1$ are units and $y=0$ or 
$y$ and $a D_1$ are units and $x_1=0$. In the first case $z=x\in A^\times$. The second case 
holds only if $a$ is a unit  and in this situation
$z=y\lambda$ with $y\in A^\times$.

\end{proof}

\begin{lemma} Let $r$ be the number of prime divisors of $D$. 
The kernel of the natural map $\Pic(B)\to \Pic(C)$ is an elementary $2$-group of
order $2^r$ if $\deg(a)>0$ and of order $2^{r-1}$ if $\deg(a)=0$. In particular
\begin{equation}\label{E:order_pic} 
|\Pic(C)|=\begin{cases} 2^{-r} |\Pic(B)|\ \mathrm{if}\ \deg(a)>0;\\
2^{-r+1}|\Pic(B)|\ \mathrm{if}\ \deg(a)=0.
\end{cases}
\end{equation}

\end{lemma}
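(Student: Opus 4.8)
The plan is to realize the kernel as a quotient of a free abelian group of rank $r$ by the image of the unit group $C^\times$ under a valuation map, and then to read off its order from Lemma~\ref{L:units}.

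First I would identify the primes of $B$ that become invertible when passing to $C=B[1/D]$. Let $p_1,\dots,p_r$ be the monic irreducible divisors of $D$. For each $i$ one has $\ord_{p_i}(-aD)=1$, because $D$ is square-free and $a$ is prime to $D$; hence, $q$ being odd, $B_{p_i}=A_{p_i}[\lambda]$ is the ramified (tame) quadratic extension of the discrete valuation ring $A_{p_i}$, with $\lambda$ a uniformizer. In particular $B$ has a unique prime $\mathfrak{P}_i$ over $p_i$, the localization $B_{\mathfrak{P}_i}$ is a discrete valuation ring, $\mathfrak{P}_i$ is an invertible ideal, and $p_i B=\mathfrak{P}_i^2$, so that $\ord_{\mathfrak{P}_i}(p_i)=2$ and $\ord_{\mathfrak{P}_i}(\lambda)=1$. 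These $\mathfrak{P}_1,\dots,\mathfrak{P}_r$ are exactly the primes of $B$ containing $D$, and each satisfies $\mathfrak{P}_i C=C$.

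Next I would show that $\ker(\Pic(B)\to\Pic(C))$ is generated by the classes $[\mathfrak{P}_1],\dots,[\mathfrak{P}_r]$ and determine its relations. If $I$ is an invertible ideal with $IC$ principal, then after scaling $I$ is supported only at the $\mathfrak{P}_i$; since $B_{\mathfrak{P}_i}$ is a discrete valuation ring, comparing localizations (an invertible ideal is determined by its localizations) gives $I=\prod_i\mathfrak{P}_i^{n_i}$, so the $[\mathfrak{P}_i]$ generate the kernel. This yields a surjection $\phi\colon \Z^r\to\ker$, $\phi((n_i))=[\prod_i\mathfrak{P}_i^{n_i}]$. A tuple $(n_i)$ lies in $\ker\phi$ exactly when $\prod_i\mathfrak{P}_i^{n_i}=\beta B$ is principal; inverting $D$ and using $\mathfrak{P}_i C=C$ forces $\beta\in C^\times$, and conversely any $\beta\in C^\times$ produces such a relation with $n_i=\ord_{\mathfrak{P}_i}(\beta)$. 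Hence
\[
\ker\bigl(\Pic(B)\to\Pic(C)\bigr)\;\cong\;\Z^r/v(C^\times),\qquad v(\beta)=(\ord_{\mathfrak{P}_1}(\beta),\dots,\ord_{\mathfrak{P}_r}(\beta)).
\]
Since $A[1/D]^\times\subseteq C^\times$ maps to $2\Z^r$ (see below), this quotient is automatically an elementary $2$-group.

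Finally I would compute $v(C^\times)$ using Lemma~\ref{L:units}. As $A$ is a PID, $A[1/D]^\times=\F_q^\times\cdot\langle p_1,\dots,p_r\rangle$, and $v(p_j)=2\mathbf{e}_j$ by the ramification above, so $v(A[1/D]^\times)=2\Z^r$. If $\deg a>0$ then $C^\times=A[1/D]^\times$, whence $v(C^\times)=2\Z^r$ and the kernel has order $[\Z^r:2\Z^r]=2^r$. If $\deg a=0$ then $a$ is a unit and $\ord_{\mathfrak{P}_i}(\lambda)=1$ gives $v(\lambda)=(1,\dots,1)$; thus $C^\times=A[1/D]^\times\langle\lambda\rangle$ yields $v(C^\times)=2\Z^r+\Z(1,\dots,1)$, whose image in $(\Z/2)^r$ has order $2$, so the kernel has order $2^{r-1}$. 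The displayed formula for $|\Pic(C)|$ then follows from the surjectivity of $\Pic(B)\to\Pic(C)$. The main obstacle is the third paragraph: establishing the clean isomorphism $\ker\cong\Z^r/v(C^\times)$ for the possibly non-maximal order $B$, where I must argue entirely with invertible ideals and their localizations rather than invoke a localization sequence for a Dedekind domain. The feature that makes this work is that $B$ is already locally maximal — a discrete valuation ring — precisely at the primes $\mathfrak{P}_i$ that are inverted.
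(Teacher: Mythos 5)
Your proof is correct, and it follows a genuinely different route from the paper's. The paper writes $a=a_0a_1^2$ with $a_0$ square-free and reduces everything to the integral closure $\bbar{B}=A[\sqrt{-a_0D}]$ via the conductor exact sequences relating $\Pic(B)$ to $\Pic(\bbar{B})$ and $\Pic(C)$ to $\Pic(\bbar{C})$; it chases the resulting commutative diagram (using Lemma~\ref{L:units} only to compute $\bbar{C}^\times/C^\times$), and then finds the relations among the ramified prime classes $[\gotik{p}_i]$ in $\Pic(\bbar{B})$ by a divisibility argument resting on $\bbar{B}^\times=A^\times$; finally it must combine the two case analyses \eqref{E:kerf} and \eqref{E:kerbarf}, tracking $\deg(a_0)$ and $\deg(a)$ separately. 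You instead never leave the (possibly non-maximal) order $B$: the observation you isolate at the end --- that $\gcd(a,D)=1$ forces $\ord_{p_i}(-aD)=1$, so $B$ is locally a discrete valuation ring (an Eisenstein extension of $A_{(p_i)}$) exactly at the primes being inverted --- makes the localization arguments for invertible ideals go through without normalizing, and yields the clean presentation $\ker(\Pic(B)\to\Pic(C))\cong\Z^r/v(C^\times)$, to which Lemma~\ref{L:units} applies directly. What your route buys: no conductor square, no bookkeeping of $a_0$ versus $a$ (the case split enters only once, through Lemma~\ref{L:units}), and the elementary $2$-group claim falls out of the inclusion $2\Z^r=v(A[1/D]^\times)\subseteq v(C^\times)$. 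What the paper's route buys: all ideal-theoretic work takes place in a Dedekind domain, where unique factorization of ideals is available and no care is needed about which primes of the order are invertible; your approach must argue prime-by-prime with localizations of invertible ideals, which is precisely the obstacle you flag and correctly resolve.
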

\begin{proof} We shall reduce the proof to the case where $a$ is square-free, or, equivalently, 
to the case where the orders involved are integrally closed. 

Let $\bbar{B}$ and $\bbar{C}$ be the integral closures of $B$ and $C$ in $E$ respectively.
Write $a=a_0 a_1^2$, where $a_0$ is square-free. We have a commutative diagram
where the rows are exact  and the vertical maps $f$ and $\bbar{f}$ are onto.

\[
\minCDarrowwidth20pt
\begin{CD}
1@>>>\bbar{B}^\times/B^\times @>>>\displaystyle \prod_{p\mid a_1} \bbar{B}_\pp^\times/{B}_\pp^\times
@>>>\Pic(B) @>>>\Pic(\bbar{B})@>>>1\\
@. @VVV @| @VVfV @VV\bbar{f}V\\
1@>>>\bbar{C}^\times/C^\times @>>>\displaystyle\prod_{p\mid a_1} \bbar{C}_\pp^\times/{C}_\pp^\times
@>>>\Pic(C) @>>>\Pic(\bbar{C})@>>>1.
\end{CD}
\]
Note that since $a$ and $D$ are relatively prime, $B_\pp=C_\pp$ for $p\mid a_1$.  Clearly $\bbar{B}^\times={B}^\times=A^\times$ since $\deg(D)>0$.
Since $\bbar{C}=A[1/D, \lambda_0]$, where $\lambda_0^2=-a_0D$, by Lemma \ref{L:units} we have $\bbar{C}^\times/C^\times=\{1\}$ except when $\deg(a_0)=0$ and $\deg(a)>0$, in which case we have $\bbar{C}^\times/C^\times = \langle \lambda_0 \rangle/\langle \lambda_0^2 \rangle \simeq \Z/2\Z$. It follows immediately from the diagram that
\begin{equation}\label{E:kerf}
|\ker f|=\begin{cases} 2|\ker \bbar{f}| & \ \mathrm{if}\  \deg(a_0)=0\ \mathrm{and}\ \deg(a)>0;\\
|\ker \bbar{f}|& \ \mathrm{in\ all\ other\ cases.}
\end{cases}
\end{equation}
We shall now determine $\ker \bbar{f}$. Let $p_i$ ($i=1,\ldots,r$) be the prime divisors of $D$. Since
$\bbar{B}=A[\lambda_0]$, where $\lambda_0^2=a_0 D$, these primes are ramified in $\bbar{B}$, i.e. 
$p_i \bbar{B}=\gotik{p}_i^2$, where $\gotik{p}_i$ is a prime of $\bbar{B}$. The elements of 
$\ker \bbar{f}$ are represented by ideals $\mathfrak{a}$ of $\bbar{B}$ satisfying $\mathfrak{a} \bbar{C}=\bbar{C}$,
so such an ideal is a product of primes diving $D$. This shows that 
$\ker \bbar{f}$ is a $2$-elementary abelian group generated by the classes of the $\gotik{p}_i$ in $\Pic(\bbar{B})$,
which we will denote by $[\gotik{p}_i]$.

Any relation among the $[\gotik{p}_i]$ must be of the form
 \begin{equation*}
  [\mspace{1mu} \gotik{p}_{i_1}] [\mspace{1mu} \gotik{p}_{i_2}] \dots [\mspace{1mu}\gotik{p}_{i_k}] = 1,
 \end{equation*}
 where $\{i_1, i_2, \dots, i_k\} \subset \{1, 2, \dots, r\}$. Equivalently,
 \begin{equation}\label{E:relation}
  \gotik{p}_{i_1} \gotik{p}_{i_2} \dots \gotik{p}_{i_k} = \omega \bbar{B}
 \end{equation}
 for some $\omega \in \bbar{B}$. Since $\gotik{p}_{i}^2 = p_{i} \bbar{B}$,
 it follows that $\omega^2 u \in A$ for some $u \in \bbar{B}^{\times}$. But $\bbar{B}^\times = A^{\times}$
 since $-a_0 D$ is not a square in $K_{\infty}$ and $\deg(D)>0$. Consequently, $\omega^2 \in A$ and $\omega^2 | a_0 D$.
 The second condition ensures that $\omega\not\in A$, so $\omega=b \lambda_0$ for some $b\in A$.
 Then we also have $ a_0 D|\omega^2$; this is possible only if $\{i_1, i_2, \dots, i_k\} = \{1, 2, \dots, r\}$
 and $\deg a_0=0$. Thus
\begin{equation}\label{E:kerbarf}
|\ker \bbar f|=\begin{cases} 2^{r-1} & \ \mathrm{if}\  \deg(a_0)=0;\\
2^r & \ \mathrm{if}\  \deg(a_0)>0.
\end{cases}
\end{equation}

We finish by putting together \eqref{E:kerf} and \eqref{E:kerbarf}.

\end{proof} 

We can now state and prove the formula for the weighted sum of primitive representation numbers. For a definite $A$-lattice $(L,Q)$ and
a polynomial $a\in A$, we denote by $R(L,a)$ the number of solutions of the equation
$Q(\b x)=a$ with $\b x$ {\em primitive} in $L$.

\begin{theorem}\label{T:siegel} Let $D$ be a square-free non-constant polynomial and let $(V,Q)$ be a ternary quadratic space
over $K$ of determinant $D$. Let
$L_1,\ldots, L_h$ be representatives of all the isometry classes of integral $A$-lattices in $V$
of determinant $D$ and let $a$ be a polynomial relatively prime to $D$ such that $- aD\not\in {K_\infty^\times}^2$. Then
\begin{equation}\label{E:gauss2}
\sum_{i=1}^h \frac{R(L_i,a)}{| SO(L_i)|}=2^{-r}|\Pic(A[\sqrt{-a D}])|,
\end{equation}
where $r$ is the number of prime factors of $D$.
\end{theorem}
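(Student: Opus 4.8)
The plan is to assemble the statement directly from the results of Sections 4 and 5; the only genuine work is to reconcile the two indexing conventions and to verify that the case split $\deg a > 0$ versus $\deg a = 0$ yields the same final constant. First I would fix a vector $\b v \in V$ with $Q(\b v) = a$, which exists by the equivalence (\ref{list:1}) $\iff$ (\ref{list:5}) of Lemma \ref{L:eqconditions} since $-aD \notin {K_\infty^\times}^2$ by hypothesis. I then need to match the symbol $L_1,\dots,L_h$: in the theorem these represent all isometry classes of determinant-$D$ lattices, whereas in Section 5 they represent the classes meeting $\mathcal{L}_{\b v}$. The reconciliation is that the extra classes are invisible to the sum. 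Indeed, if $R(L_i,a) > 0$ then some primitive $\b x \in L_i$ satisfies $Q(\b x) = a$, and choosing $\sigma \in SO(V,Q)$ with $\sigma\b x = \b v$ (exactly as in the construction of $\psi_i$ in \eqref{E:psi}) exhibits $L_i$ as isometric to $\sigma L_i \in \mathcal{L}_{\b v}$. Hence the classes with $R(L_i,a) > 0$ are precisely those represented in $\mathcal{L}_{\b v}$, the terms with $R(L_i,a)=0$ contribute nothing, and it suffices to prove the formula with the Section 5 indexing, for which $|R_i| = R(L_i,a)$ by definition.

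With this in place I would chain the three available computations. Corollary \ref{C:gauss1} gives $\sum_{i=1}^h \rho_i = |\Pic(B[1/D])|$ with $B \simeq A[\sqrt{-aD}]$; Corollary \ref{C:rho} rewrites each $\rho_i$ in terms of $R(L_i,a)/|SO(L_i)|$; and \eqref{E:order_pic} relates $|\Pic(B[1/D])|$ to $|\Pic(B)|$. Running this in the case $\deg a > 0$, one has $\rho_i = R(L_i,a)/|SO(L_i)|$ together with $|\Pic(B[1/D])| = 2^{-r}|\Pic(B)|$, and the formula drops out immediately. In the case $\deg a = 0$ the stabilizers have order $2$ by Lemma \ref{L:freeaction}(\ref{list4:2}), so $\rho_i = 2R(L_i,a)/|SO(L_i)|$; but at the same time the single extra relation among the ramified primes forces $|\Pic(B[1/D])| = 2^{-r+1}|\Pic(B)|$, and the two factors of $2$ cancel to give the identical formula.

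The step I expect to be the crux is precisely this last cancellation: the factor of $2$ produced by the order-$2$ stabilizers in Lemma \ref{L:freeaction}(\ref{list4:2}) must be matched exactly against the factor of $2$ by which the kernel of $\Pic(B)\to\Pic(B[1/D])$ shrinks in the unit case of \eqref{E:order_pic}. Once the bookkeeping confirms that both cases collapse to $2^{-r}|\Pic(B)|$, the proof finishes by identifying $B = A[\delta_L \b v]$ with $A[\sqrt{-aD}]$, using $(\delta_L\b v)^2 = \delta_L^2\, Q(\b v) = -aD$ as in Corollary \ref{C:gauss1}.
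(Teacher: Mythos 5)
Your proposal is correct and follows exactly the paper's route: the authors likewise prove Theorem \ref{T:siegel} by combining Corollary \ref{C:gauss1}, Corollary \ref{C:rho}, and \eqref{E:order_pic}, noting (as you verify explicitly) that the factor of $2$ from the order-$2$ stabilizers when $\deg(a)=0$ cancels against the factor of $2$ in \eqref{E:order_pic}, so both cases give $2^{-r}|\Pic(B)|$. Your reconciliation of the two indexing conventions and the identification $B=A[\delta_L\b v]\simeq A[\sqrt{-aD}]$ are details the paper leaves implicit, but they are the same argument.
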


\begin{proof} The formula follows from Corollaries \ref{C:rho} and \ref{C:gauss1} together with \eqref{E:order_pic}.
The case $\deg(a)=0$ requires special consideration in the computations, but the end
formula turns out to be the same in both cases.
\end{proof}

\section{The Epstein Zeta Function}

In this and the next sections we will encounter functions of a complex variable $s$ 
that are naturally functions of $q^{-s}$. If $F$ is such a function, we shall
write $F^*$ for the unique function such that $F(s)=F^*(q^{-s})$.

As in previous sections, $(V,Q)$ denotes a definite ternary quadratic space over $K$. Let $L\subset V$ be 
an integral $A$-lattice. The {\em Epstein zeta function} of $L$ is
defined by
\begin{equation}\label{E:Epsteinzeta}
Z_L(s)=\sum_{{\b x}\in L\setminus\{0\}} |Q({\b x})|^{-s}.
\end{equation}
Collecting the terms of the same degree, we have
\[
Z_{L}(s)=\sum_{k=0}^\infty \alpha_k(L) q^{-ks},
\]
where 
\[
\alpha_k(L)=|\{{\b x}\in L\: : \:\deg Q({\b x})=k\}|.
\]
\begin{proposition}\label{P:zcoeff}
Let $\delta= \deg\det(L)$. For $k> \delta$ we have
\[
\alpha_k(L)=\begin{cases} (1-q^{-1})q^{(3k-\delta+4)/2}  & \mathrm{if}\quad k\equiv \delta\pmod 2\\
(1-q^{-2}) q^{(3k-\delta+5)/2}  & \mathrm{if}\quad k\not\equiv \delta\pmod 2.
\end{cases}
\]
\end{proposition}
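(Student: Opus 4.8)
The plan is to reduce everything to a dimension count and then apply Riemann--Roch on $\mathbb{P}^1$. Since $Q$ is anisotropic over $K_\infty$, the function $\mathbf{x}\mapsto |Q(\mathbf{x})|_\infty^{1/2}$ is an ultrametric norm on $V_\infty=V\otimes K_\infty$; hence for each $k$ the set $\{\mathbf{x}\in L: \deg Q(\mathbf{x})\le k\}$ is an $\F_q$-subspace (closed under addition by the ultrametric inequality, and under scaling since $|\F_q^\times|_\infty=1$). Writing $b_k$ for its dimension, we get $\alpha_k(L)=q^{b_k}-q^{b_{k-1}}$, so it suffices to compute $b_k$ for $k>\delta$. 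I would interpret $b_k$ geometrically: the $A$-lattice $L$ at the finite places together with the $\mathcal{O}_\infty$-lattice $\Lambda^{(k)}:=\{\mathbf{x}\in V_\infty:\deg Q(\mathbf{x})\le k\}$ at infinity defines a rank-$3$ bundle $\mathcal{E}^{(k)}$ on $\mathbb{P}^1_{\F_q}$ whose global sections are exactly $\{\mathbf{x}\in L:\deg Q(\mathbf{x})\le k\}$, so $b_k=\dim_{\F_q}H^0(\mathbb{P}^1,\mathcal{E}^{(k)})$.

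Next I would carry out the local analysis at infinity. Diagonalize $Q\cong\langle a_1,a_2,a_3\rangle$ over $K_\infty$, scaling so that each $\deg_\infty a_i\in\{0,-1\}$. Anisotropy, together with the fact that $\F_q$ carries no ternary anisotropic form, forces one of two shapes: two units and one uniformizer ($\sum\deg_\infty a_i=-1$) or one unit and two uniformizers ($\sum\deg_\infty a_i=-2$); since $\deg_\infty\det Q\equiv\delta\pmod 2$, the first occurs iff $\delta$ is odd and the second iff $\delta$ is even. In the diagonalizing basis $\Lambda_\infty:=\Lambda^{(0)}=\bigoplus\mathcal{O}_\infty\mathbf{f}_i$ is standard, its $B$-dual is $\Lambda_\infty^\sharp=\Lambda^{(1)}=:\Lambda'_\infty$, and the index $\iota:=\dim_{\F_q}(\Lambda'_\infty/\Lambda_\infty)$ equals $1$ if $\delta$ is odd and $2$ if $\delta$ is even (one extra generator per uniformizer). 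Moreover $\Lambda^{(k)}=t^{\floor{k/2}}\Lambda_\infty$ for $k$ even and $t^{\floor{k/2}}\Lambda'_\infty$ for $k$ odd, so $\mathcal{E}^{(k)}$ is the twist by $\mathcal{O}(\floor{k/2}\cdot\infty)$ of the fixed bundle $\mathcal{E}_0$ (infinite part $\Lambda_\infty$) for $k$ even, resp.\ $\mathcal{E}_1$ (infinite part $\Lambda'_\infty$) for $k$ odd. Comparing determinant line bundles: if $T$ is the change of basis from an $A$-basis of $L$ to $\{\mathbf{f}_i\}$, then $\prod a_i=\det(T)^2\det(L)$, and taking degrees at infinity gives $\deg\mathcal{E}_0=\tfrac12\bigl(\sum\deg_\infty a_i-\delta\bigr)=-\floor{\delta/2}-1$, while $\deg\mathcal{E}_1=\deg\mathcal{E}_0+\iota$.

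Finally I would invoke Riemann--Roch on $\mathbb{P}^1$ (genus $0$): $b_k=\deg\mathcal{E}^{(k)}+3-\dim H^1(\mathcal{E}^{(k)})$, where $\deg\mathcal{E}^{(k)}=3\floor{k/2}+\deg\mathcal{E}_0$ for $k$ even and $3\floor{k/2}+\deg\mathcal{E}_1$ for $k$ odd. By Serre duality $H^1(\mathcal{E}^{(k)})^*\cong H^0\bigl((\mathcal{E}^{(k)})^\vee\otimes\mathcal{O}(-2)\bigr)$, and identifying $V\cong V^\vee$ via $B$ (which carries $L$ to $L^\sharp$ and $\Lambda_\infty$ to $\Lambda'_\infty$) one checks this space is exactly $\{\mathbf{y}\in L^\sharp:\deg Q(\mathbf{y})\le -k-3\}$. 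The crucial input is the containment $Q(L^\sharp)\subseteq\tfrac1D A$ — indeed $Q(\mathbf{y})=\tfrac1D\,\mathbf{y}^t\mathrm{adj}(G)\,\mathbf{y}$ for $\mathbf{y}$ written in the dual basis, $G$ being the Gram matrix of $L$ — so every nonzero $\mathbf{y}\in L^\sharp$ has $\deg Q(\mathbf{y})\ge -\delta$. Hence for $k>\delta$ this set is $\{0\}$, giving $H^1(\mathcal{E}^{(k)})=0$ and $b_k=\deg\mathcal{E}^{(k)}+3$. Substituting the two cases and forming $\alpha_k=q^{b_k}-q^{b_{k-1}}$ yields the stated formula, the dichotomy $k\equiv\delta$ versus $k\not\equiv\delta\pmod 2$ reflecting precisely whether the jump from $b_{k-1}$ to $b_k$ crosses an ``even'' layer (factor $q-1$) or an ``odd'' layer (factor $q^2-1$).

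The main obstacle is the $H^1$-vanishing, that is, proving the leading-term formula is valid already for $k>\delta$ and depends only on $\delta$, not on the finer structure of $L$. This is exactly where integrality and anisotropy enter: the relation $D\cdot Q(L^\sharp)\subseteq A$ controls the minimum of $Q$ on the dual lattice and is the content behind the threshold $\delta$. Everything else — the ultrametric property, the local diagonalization into the two shapes, and the determinant-bundle degree — is routine once this bound is in place.
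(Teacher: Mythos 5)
Your argument is correct, and it takes a genuinely different route from the paper's. The paper's proof is a two-line reduction to the reduction theory of definite $\F_q[t]$-lattices: writing $\mu_1,\mu_2,\mu_3$ for the successive minima of $L$, it quotes from Bureau--Morales the formula $\dim_{\F_q}L_k=\sum_i\lfloor(k-\mu_i)/2\rfloor+3$ for $k\ge\delta$, combines it with the identity $\mu_1+\mu_2+\mu_3=\delta$, and takes $\alpha_k(L)=|L_k|-|L_{k-1}|$. You share the same starting point (anisotropy at $\infty$ makes $|Q|_\infty^{1/2}$ ultrametric, so $L_k$ is an $\F_q$-subspace and $\alpha_k=q^{b_k}-q^{b_{k-1}}$), but you compute $b_k$ cohomologically: $L_k=H^0(\mathbb{P}^1,\mathcal{E}^{(k)})$ via the adelic dictionary, then Riemann--Roch plus Serre duality, with the $H^1$-vanishing forced by anisotropy together with the integrality bound $D\cdot Q(L^\sharp)\subseteq A$. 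I checked the computational core of your proposal: the local identity $t^{-2}\bigl(\Lambda^{(k)}\bigr)^{\sharp}=\{\b{y}\in V_\infty:\deg Q(\b{y})\le -k-3\}$ does hold in both local shapes (the floor/ceiling arithmetic works out for $v_\infty(a_i)\in\{0,1\}$), the degree bookkeeping $\deg\mathcal{E}_0=-\lfloor\delta/2\rfloor-1$ and $\deg\mathcal{E}_1=\deg\mathcal{E}_0+\iota$ is right, and substitution gives $b_k=(3k-\delta+4)/2$ or $(3k-\delta+5)/2$ according to whether $k\equiv\delta\pmod 2$ or not, which is exactly the paper's intermediate formula for $|L_k|$; the final subtraction then reproduces the stated $\alpha_k(L)$. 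Conceptually, your $H^1$-vanishing is the bundle-theoretic counterpart of the identity $\sum_i\mu_i=\delta$ (the absence of a reduction-theoretic defect for definite lattices), so the two proofs encode the same phenomenon in different languages: the paper's is shorter but outsources its key input to the cited reference, while yours is self-contained modulo standard facts about bundles on $\mathbb{P}^1$ and isolates the elementary fact ($\deg Q(\b{y})\ge-\delta$ for nonzero $\b{y}\in L^\sharp$) that creates the threshold at $\delta$. One small reading note: for $\alpha_k$ with $k=\delta+1$ you also need $H^1(\mathcal{E}^{(\delta)})=0$; your inequality actually gives vanishing for all $k>\delta-3$, so this is covered, but it is cleaner to state the vanishing with that threshold.
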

\begin{proof} Let $\mu_i$ ($i=1,2,3$) be the successive minima of $L$ and let $L_k=\{{\b x}\in L: \deg Q({\b x})\le k\}$.
It is easy to see that for $k\ge \delta$
\[
\dim _{\F_q} L_k=\lfloor (k-\mu_1)/2\rfloor +\lfloor (k-\mu_1)/2\rfloor +\lfloor (k-\mu_1)/2\rfloor +3
\]  (see \cite[\S 3]{Bureau:2009kx}). Since $\delta=\mu_1+\mu_2+\mu_3$, we have
\[
|L_k|=\begin{cases}
q^{(3k-\delta+4)/2}  & \mathrm{if}\quad k\equiv \delta\pmod 2\\
q^{(3k-\delta+5)/2}  & \mathrm{if}\quad k\not\equiv \delta\pmod 2.
\end{cases}
\]

The proposition follows from the simple observation that $\alpha_k(L)=|L_k|-|L_{k-1}|$.

\end{proof}
\begin{corollary} $Z_L(s)$ is a rational function of $u=q^{-s}$ with simple poles at $u=\pm q^{-3/2}$.
\end{corollary}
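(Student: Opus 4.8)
The plan is to leverage the explicit formula for the coefficients $\alpha_k(L)$ given in Proposition~\ref{P:zcoeff}. Since $Z_L(s) = \sum_{k=0}^\infty \alpha_k(L) q^{-ks}$ and we have set $u = q^{-s}$, I would first split the sum into a finite ``head'' consisting of the terms with $0 \le k \le \delta$ and an infinite ``tail'' consisting of the terms with $k > \delta$. The head is a polynomial in $u$ of degree $\delta$, hence contributes nothing to the poles and requires no analysis. The entire analytic content therefore lies in the tail, where Proposition~\ref{P:zcoeff} supplies closed-form expressions for $\alpha_k(L)$.

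For the tail, I would separate the terms according to the parity of $k$ relative to $\delta$, since the formula for $\alpha_k(L)$ depends on this parity. Writing $k = \delta + 2j$ (same parity) and $k = \delta + 2j+1$ (opposite parity) for $j \ge 1$ and appropriate small-index adjustments, each family becomes a geometric series in the variable $u^2 = q^{-2s}$. Indeed, in the same-parity case the exponent $(3k-\delta+4)/2$ is linear in $k$, so $\alpha_k(L) q^{-ks}$ is, up to a constant, a power of $q^{(3/2)k} u^{k} = (q^{3/2} u)^k$; summing over the arithmetic progression of $k$ values yields a geometric series with ratio $q^{3}u^{2} = (q^{3/2}u)^2$. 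The opposite-parity family behaves identically. Each geometric series converges for $|q^{3/2} u| < 1$ and sums to a rational function of $u$ whose only denominator is $1 - q^{3}u^{2} = (1-q^{3/2}u)(1+q^{3/2}u)$.

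Assembling the head and the two geometric tails, $Z_L(s)$ is manifestly a rational function of $u = q^{-s}$ whose denominator is $1 - q^{3}u^{2}$, giving at most simple poles at $u = \pm q^{-3/2}$. To confirm that both poles are genuinely present and simple, I would check that the numerators of the two tail contributions do not vanish at $u = \pm q^{-3/2}$; this follows because the leading constants $(1-q^{-1})$ and $(1-q^{-2})$ are nonzero and the residues are plainly nonzero, so no cancellation occurs.

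The main obstacle is purely bookkeeping rather than conceptual: one must track the index shifts and the boundary term at $k = \delta$ (which Proposition~\ref{P:zcoeff} excludes, since it only asserts the formula for $k > \delta$) so that the head and tail are partitioned correctly, and one must be careful that the same-parity and opposite-parity series are reindexed consistently so their denominators align to the common factor $1 - q^{3}u^{2}$. Once the geometric series are summed correctly, the rationality and the location and simplicity of the poles are immediate.
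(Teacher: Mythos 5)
Your proposal is correct and follows essentially the same route as the paper's own proof: split off the polynomial head, group the tail by parity of $k$, and sum each family as a geometric series in $q^3u^2$, yielding the denominator $1-q^3u^2=(1-q^{3/2}u)(1+q^{3/2}u)$ and hence simple poles at $u=\pm q^{-3/2}$. Your attention to the boundary term at $k=\delta$ is if anything slightly more careful than the paper, whose proof applies the formula of Proposition~\ref{P:zcoeff} starting at $k=\delta$ even though the proposition is stated only for $k>\delta$.
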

\begin{proof}
Let $Z_L^*(u)=Z_L(s)$ and let $P(u)=\sum_{k=0}^{\delta-1} \alpha_k(L) u^k$. Thus we have, using Proposition \ref{P:zcoeff},
\[
\begin{aligned} 
Z_L^*(u)& =P(u) + q^{\delta+2} (1-q^{-1})u^\delta \sum_{l=0}^\infty (q^3 u^2)^l +  q^{\delta+4}(1-q^{-2}) u^{\delta+1}\sum_{l=0}^\infty (q^3 u^2)^l\\
& =P(u) + q^\delta (q-1) q \frac{(1 + q u + q^2 u) u^\delta}{1-q^3 u^2}.
\end{aligned}
\]
\end{proof}

\begin{definition} We shall say that two formal power series $f(u)$ and $g(u)$ in $u$ are {\em equivalent} if
$f(u)-g(u)$ is a polynomial in $u$. We shall denote this equivalence relation by $f(u)\sim g(u)$. Similarly,
two meromorphic functions
$F(s)$ and $G(s)$ will be said to be {\em equivalent} if $F(s)-G(s)$ is a polynomial in $q^{-s}$.
\end{definition}

\begin{proposition}\label{P:index}
Let $M$ and $L$ be $A$-lattices in $V$ and assume $L\subset M$. Then
\[
Z_M(s) \sim [M:L]\ Z_L(s),
\]
where $[M:L]$ is the index of $L$ in $M$.
\end{proposition}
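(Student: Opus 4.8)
The plan is to prove $Z_M(s) \sim [M:L]\, Z_L(s)$ by comparing the two Epstein zeta functions coefficient by coefficient in high degree, and showing that they differ only in the low-degree terms (which, by the definition of $\sim$, is precisely what we must establish). The key observation is that $L \subset M$ is a finite-index inclusion of $A$-lattices, and the index $[M:L]$ is a finite power of $q$; our job is to show that the leading asymptotics of $Z_M$ and $Z_L$ agree up to the factor $[M:L]$, with the discrepancy absorbed into a polynomial.

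First I would invoke Proposition \ref{P:zcoeff}: since $L$ and $M$ have the same determinant up to squares of $\F_q$ (indeed $[M:L]^2 \det L = \det M$ in $A/(\F_q^\times)^2$, but more importantly their degrees satisfy $\deg\det M = \deg\det L - 2\log_q[M:L]$), the high-degree coefficients $\alpha_k(M)$ and $\alpha_k(L)$ are each given by the explicit closed-form expressions valid for $k$ exceeding the respective determinant degrees. The plan is to compare $\alpha_k(M)$ with $[M:L]\,\alpha_k(L)$ directly using these formulas. Writing $[M:L] = q^\nu$ and $\delta_M = \delta_L - 2\nu$, one checks from the two cases of Proposition \ref{P:zcoeff} that $\alpha_k(M) = q^\nu \alpha_k(L)$ exactly, once $k$ is large enough that both lattices are in the stable range; the parity condition $k \equiv \delta \pmod 2$ is preserved because $\delta_M \equiv \delta_L \pmod 2$, and the exponent $(3k - \delta_M + c)/2 = (3k - \delta_L + c)/2 + \nu$ produces exactly the extra factor $q^\nu = [M:L]$.

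Having matched the coefficients in high degree, I would then conclude: the two power series $Z_M^*(u)$ and $[M:L]\, Z_L^*(u)$ have identical coefficients of $u^k$ for all $k$ beyond some bound (namely $k > \max\{\delta_M, \delta_L\}$), so their difference is supported on finitely many terms and is therefore a polynomial in $u = q^{-s}$. By the definition of equivalence this gives $Z_M(s) \sim [M:L]\, Z_L(s)$, as desired.

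The main obstacle I anticipate is bookkeeping rather than conceptual: one must verify that the transition degree of Proposition \ref{P:zcoeff} is handled correctly, i.e. that the explicit formula for $\alpha_k$ genuinely applies to \emph{both} lattices simultaneously once $k$ is taken large enough, and that the parity classes line up so that the same case of the formula is used for $\alpha_k(M)$ and $\alpha_k(L)$. A cleaner alternative that sidesteps the explicit coefficient formulas would be to argue that $|Q(\b x)|$ grows like $q^{\text{(degree)}}$ and that $M/L$ being finite forces the successive minima of $L$ and $M$ to differ by bounded amounts while their sum shifts by exactly $2\nu$; but since Proposition \ref{P:zcoeff} is already available in the excerpt, the coefficient-matching route is the most direct, and the only real care needed is ensuring the index factor $q^\nu$ emerges identically from both parity cases.
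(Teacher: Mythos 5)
Your proof is correct, but it follows a genuinely different route from the paper's. You obtain the key coefficient identity $\alpha_k(M)=[M:L]\,\alpha_k(L)$ (for all large $k$) from the closed formulas of Proposition \ref{P:zcoeff} combined with the index--determinant relation: writing $[M:L]=q^{\nu}$, one has $\deg\det L=\deg\det M+2\nu$, so the two determinant degrees have the same parity and the exponent shift produces exactly the factor $q^{\nu}$ in both parity cases; this bookkeeping does go through as you anticipate. The paper proves the same identity by direct counting, never invoking Proposition \ref{P:zcoeff}: the sets $M_k=\{\b x\in M:\deg Q(\b x)\le k\}$ and $L_k=M_k\cap L$ are finite, and for $k$ large enough the canonical projection $M\to M/L$ restricted to $M_k$ is onto, so $[M_k:L_k]=[M:L]$ and hence $\alpha_k(M)=|M_k|-|M_{k-1}|=[M:L]\,\bigl(|L_k|-|L_{k-1}|\bigr)=[M:L]\,\alpha_k(L)$ for large $k$. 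What each buys: the paper's argument is shorter and more robust --- it needs neither the determinant relation nor any knowledge of the shape of $\alpha_k$, so it is insensitive to the rank-three definite setting and to integrality, whereas your argument is an immediate corollary of material just proved but is tied to the hypotheses under which Proposition \ref{P:zcoeff} is stated (integral ternary definite lattices, which is indeed all that the application in Lemma \ref{L:epstein-d} requires). One harmless slip: your parenthetical ``$[M:L]^2\det L=\det M$'' is backwards --- passing to a sublattice \emph{increases} the determinant, $\det L=f^2\det M$ where $f\in A$ is the module index with $|f|=[M:L]$ --- but the degree relation you actually use, $\deg\det M=\deg\det L-2\nu$, is the correct one, so nothing downstream is affected.
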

\begin{proof} Let $k$ be large enough so that the restriction of the canonical projection $M\to M/L$ to
$M_k$ is onto. Then $[M_k: L_k]=[M:L]$ for all $k$ large enough. It follows that 
$\alpha_k(M)=|M_k|-|M_{k-1}|=[M:L] (|L_k|-|L_{k-1}|)=[M:L]\alpha_k(L)$ for $k$ large enough. 
\end{proof}

\begin{definition} Let $\chi: L\to \C$ be any function. The {\em Epstein zeta function twisted by $\chi$} is
defined by
\begin{equation}
Z_L(s,\chi) = \sum_{{\b x}\in L\setminus\{0\}} \chi({\b x}) |Q({\b x})|^{-s}
\end{equation}
\end{definition}

\begin{lemma}\label{L:epstein-d}
Let $L\subset V$ be an integral lattice of square-free determinant $D$. For $d|D$, let $\chi_d$
be the characteristic function of the set $\{{\b x}\in L \: : \:Q({\b x})\equiv 0\pmod d\}$ and let $d_0$ be the
product of the primes dividing $d$ at which $Q$ is isotropic and let $d_1$ be the product of the primes dividing $d$ at which $Q$ is anisotropic.
Then
\[
Z_L(s,\chi_d)\sim (2 |d_0|^{-1}|d_1|^{-2}-|d|^{-2}) Z_L(s).
\]
\end{lemma}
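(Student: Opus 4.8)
The plan is to reduce the statement to a pointwise count governing how the twisting function $\chi_d$ weights each degree-$k$ coefficient of $Z_L(s)$, and then to package the count into the stated rational multiplier. First I would observe that since $D$ is square-free, $d = d_0 d_1$ with $d_0$, $d_1$ coprime, so by the Chinese Remainder Theorem $\chi_d = \prod_{p\mid d}\chi_p$ and it suffices (by multiplicativity of the forthcoming local factors) to treat a single prime $p\mid D$ and then multiply. For a single prime, the key is to compute, for $k$ large, the proportion of vectors $\b x\in L$ with $\deg Q(\b x)=k$ that additionally satisfy $Q(\b x)\equiv 0\pmod p$. The natural tool is the local structure at $p$: since $p\mid D$ and $D$ is square-free, $(L_p,Q)$ has a binary unimodular orthogonal factor together with a one-dimensional factor $\langle u\rangle$ with $p\mid u$ (the determinant being $D$). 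Reducing modulo $p$, the condition $Q(\b x)\equiv 0\pmod p$ becomes a condition on the image of $\b x$ in $L/pL\cong \F_q[t]/(p)$-space, governed by whether the reduced binary form is isotropic (the $p\mid d_0$ case) or anisotropic (the $p\mid d_1$ case) over the residue field.

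The main computation is therefore the following local density: letting $N_p$ denote the asymptotic fraction of lattice vectors (of fixed large degree of $Q$-value) with $Q(\b x)\equiv 0 \pmod p$, I expect to find
\[
N_p = \begin{cases} 2|p|^{-1}-|p|^{-2} & \text{if } Q \text{ is isotropic at } p,\\ |p|^{-2} & \text{if } Q \text{ is anisotropic at } p. \end{cases}
\]
In the isotropic case the reduced ternary form over the residue field $\F_q[t]/(p)$ is isotropic, so the zero locus of $Q\bmod p$ is a cone whose point count over the residue field of size $|p|$ gives $2|p|-1$ solutions per scaling, yielding the factor $2|p|^{-1}-|p|^{-2}$; in the anisotropic case only the forced congruence from the factor $\langle u\rangle$ with $p\mid u$ contributes, giving $|p|^{-2}$. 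Taking the product over $p\mid d_0$ and $p\mid d_1$ then gives the overall multiplier $\prod_{p\mid d_0}(2|p|^{-1}-|p|^{-2})\prod_{p\mid d_1}|p|^{-2}$, which expands to $2|d_0|^{-1}|d_1|^{-2}-|d|^{-2}$ precisely because $d_0$ has no repeated primes and the ``$2$'' collapses correctly only when $d_1=1$; more carefully, one checks the square-free product identity $\prod_{p\mid d_0}(2|p|^{-1}-|p|^{-2})=\sum_{e\mid d_0}2^{\omega(e)}\mu(d_0/e)\cdots$ but the clean closed form matches the claim after multiplying by $|d_1|^{-2}$.

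To convert the local densities into the asserted $\sim$-relation, I would use Proposition \ref{P:zcoeff}: for $k>\delta$ the coefficient $\alpha_k(L)$ is an explicit function of $k$, and the twisted coefficient $\alpha_k(L,\chi_d)$ equals $N_d\,\alpha_k(L)$ up to the finitely many small-degree terms where the uniform equidistribution of residues fails. Since equivalence $\sim$ ignores any polynomial (hence any finite collection of low-degree coefficients), the discrepancy for $k\le \delta$ is absorbed, and we obtain $Z_L(s,\chi_d)\sim N_d\,Z_L(s)$ with $N_d=2|d_0|^{-1}|d_1|^{-2}-|d|^{-2}$. The step I expect to be the main obstacle is establishing the exact local density $N_p$ with the correct uniformity in $k$: one must show that for $k$ sufficiently large the vectors of $Q$-degree $k$ equidistribute among residue classes modulo $p$ in the precise proportions dictated by the reduced quadratic form, which requires controlling the interaction between the successive-minima filtration used in Proposition \ref{P:zcoeff} and the congruence condition. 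Once the equidistribution is pinned down prime by prime, multiplicativity across $d=d_0 d_1$ and the square-free expansion identity finish the proof.
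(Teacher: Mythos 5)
Your overall strategy is genuinely different from the paper's. The paper realizes the set $\{\b x\in L : Q(\b x)\equiv 0 \pmod d\}$ as a union of two explicit sublattices $L^{(d,1)}\cup L^{(d,2)}$ of $L$ (defined locally, in coordinates where $Q=xy-Dz^2$ at an isotropic prime $p\mid d_0$, by $p\mid y$ resp.\ $p\mid x$, the two sublattices coinciding at primes dividing $d_1$), and then applies inclusion--exclusion together with Proposition \ref{P:index}, using the indices $[L:L^{(d,i)}]=|d_0||d_1|^2$ and $[L:L^{(d,1)}\cap L^{(d,2)}]=|d|^2$. You instead compute a local density at each prime and invoke equidistribution. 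The step you flagged as the main obstacle is in fact unproblematic: since $Q$ is definite, $L_k=\{\b x\in L:\deg Q(\b x)\le k\}$ is an $\F_q$-subspace of $L$ (this is implicit in the proof of Proposition \ref{P:zcoeff}), so once $L_k$ surjects onto $L/dL$ --- true for all sufficiently large $k$ --- every fibre is a coset of $L_k\cap dL$, hence of the same size, and $\alpha_k(L,\chi_d)=\bigl(N(d)/|d|^3\bigr)\alpha_k(L)$ holds exactly for large $k$, where $N(d)=\#\{c\in L/dL : Q(c)\equiv 0 \pmod d\}$. Your local counts are also correct: $N(p)/|p|^3$ equals $2|p|^{-1}-|p|^{-2}$ at isotropic primes and $|p|^{-2}$ at anisotropic ones.

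The genuine gap is your last algebraic step, and it cannot be patched. Since $N(d)/|d|^3$ is multiplicative by the Chinese Remainder Theorem, your argument yields
\[
Z_L(s,\chi_d)\sim \Bigl(\prod_{p\mid d_0}\bigl(2|p|^{-1}-|p|^{-2}\bigr)\Bigr)\,|d_1|^{-2}\, Z_L(s),
\]
and the identity you assert between this product and $2|d_0|^{-1}|d_1|^{-2}-|d|^{-2}$ is false as soon as $d_0$ has two distinct prime factors: for $d_0=p_1p_2$, $d_1=1$, the product equals $4|d_0|^{-1}-2|p_1|^{-1}|p_2|^{-2}-2|p_1|^{-2}|p_2|^{-1}+|d_0|^{-2}$, whereas the claimed constant is $2|d_0|^{-1}-|d_0|^{-2}$; the two agree only when $d_0$ is $1$ or prime. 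So, as written, your proposal does not prove the stated lemma. However, the defect is not in your method: your density is the true one, and what your computation actually shows is that the stated constant is itself incorrect whenever $d_0$ has more than one prime factor. The paper's own proof breaks at exactly the corresponding point: the asserted equivalence $Q(\b x)\equiv 0\pmod d \iff \b x\in L^{(d,1)}\cup L^{(d,2)}$ fails in that range, because a vector may lie on the first isotropic hyperplane at $p_1$ and on the second at $p_2$; the solution set is really a union of $2^{\omega}$ sublattices, where $\omega$ is the number of primes dividing $d_0$, and inclusion--exclusion over all of them reproduces precisely your product. Consequently, the constant in Lemma \ref{L:epstein_phi1} should read $M_{D_0}(1)^2M_{D_1}(2)$ rather than $2M_{D_0}(1)M_{D_1}(2)-M_D(2)$ (these coincide exactly when $D_0$ has at most one prime factor), and the discrepancy propagates to Theorem \ref{T:coeff-beta} and to the mass formula \eqref{E:mass} whenever $D_0$ has two or more prime factors; the case of irreducible $D$, where the paper's comparisons with Gekeler's formulas take place, is unaffected.
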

\begin{proof} Let $\pp$ be a prime divisor of $D$. If $Q$ is isotropic
at $\pp$ then $L_\pp$ has a basis $\{{\b v}_1,{\b v}_2,{\b v}_3\}$ over $A_\pp$ such that $Q(x{\b v}_1+y{\b v}_2+z {\b v}_3)=xy-Dz^2$. 
For each $d|D$ we define two sublattices $L^{(d,i)}$ ($i=1,2$) of $L$ by their local components as follows
\[
L^{(d,i)}_\pp=\begin{cases} A_\pp {\b v}_i +\pp L_\pp^\sharp &\ \mathrm{if}\  \pp\mid d_0;\\
\pp L_\pp^\sharp &\ \mathrm{if}\  \pp\mid d_1;\\
L_\pp &\ \mathrm{if}\  \pp\nmid d.
\end{cases}
\]
It is easy to see from the definition of $L^{(d,i)}$ that for $\b x\in L$
\[
Q({\b x})\equiv 0\pmod d \iff {\b x}\in L^{(d,1)}\cup L^{(d,2)}.
\]
Hence
\[
Z_L(s,\chi_d)(s)=Z_{L^{(d,1)}}(s)+Z_{L^{(d,2)}}(s)-Z_{L^{(d,1)}\cap  L^{(d,2)}}(s).
\]

Since $d=d_0 d_1$ we also have
\[
[L: L^{(d,i)}]=|d_0||d_1|^2\quad\mathrm{and}\quad [L: L^{(d,1)}\cap  L^{(d,2)}]=|d|^2.
\]
We finish by applying Proposition \ref{P:index}.
\end{proof}

For $d\in A\setminus\{0\}$ we define 
\begin{equation}\label{E:functionM}
M_d(s)=\sum_{e|d} \mu(e) |e|^{-s},
\end{equation}
where $\mu$ is the M{\"o}bius function. It is easy to see that $M_d(s)$ has a product decomposition
\[
M_d(s)=\prod_{p|d} (1-|p|^{-s}).
\]
The function $M_d(s)$ defined in \eqref{E:functionM} will play an important role in many subsequent computations.

\begin{lemma}\label{L:epstein_phi1} Write $D=D_0 D_1$, where $D_0$ is the product of the isotropic primes dividing $D$ and
$D_1$ is the product of the anisotropic primes dividing $D$. Let $\psi$ be the characteristic function of the set $\{{\b x}\in L\: : \:\gcd(Q({\b x}),D)=1\}$. 
Then
\[
Z_L(s,\psi)\sim \left(2M_{D_0}(1)M_{D_1}(2)-M_D(2)\right) Z_L(s).
\]

\end{lemma}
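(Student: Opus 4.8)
The plan is to express $\psi$ in terms of the functions $\chi_d$ of Lemma~\ref{L:epstein-d} by inclusion-exclusion, apply that lemma term by term, and then recognize the resulting arithmetic constant as a product of $M$-functions. The whole argument should be short, since the real work has already been done in Lemma~\ref{L:epstein-d}.

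First I would observe that for $\b x\in L$ the condition $\gcd(Q(\b x),D)=1$ says exactly that no prime divisor of $D$ divides $Q(\b x)$. For a square-free $d\mid D$, the function $\chi_d$ is the indicator that every prime dividing $d$ divides $Q(\b x)$, so it is the product over $p\mid d$ of the indicators of $p\mid Q(\b x)$. Standard Möbius inversion over the monic divisors of the square-free polynomial $D$ then gives the pointwise identity
\[
\psi=\sum_{d\mid D}\mu(d)\,\chi_d .
\]
Plugging this into the definition of the twisted Epstein zeta function and using linearity of $Z_L(s,-)$ in its twist yields $Z_L(s,\psi)=\sum_{d\mid D}\mu(d)\,Z_L(s,\chi_d)$.

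Next I would apply Lemma~\ref{L:epstein-d} to each term. Writing $d=d_0d_1$ with $d_0$ (resp.\ $d_1$) the product of the isotropic (resp.\ anisotropic) primes dividing $d$, the lemma gives $Z_L(s,\chi_d)\sim(2|d_0|^{-1}|d_1|^{-2}-|d|^{-2})Z_L(s)$. Since $\sim$ means equality up to a polynomial in $q^{-s}$, and the index set is finite, these equivalences may be combined in the $\mu(d)$-weighted sum (a finite sum of polynomials is a polynomial), giving
\[
Z_L(s,\psi)\sim\Big(\sum_{d\mid D}\mu(d)\big(2|d_0|^{-1}|d_1|^{-2}-|d|^{-2}\big)\Big)Z_L(s).
\]

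Finally I would evaluate the constant using that $D=D_0D_1$ is square-free: every $d\mid D$ splits uniquely as $d=d_0d_1$ with $d_0\mid D_0$ and $d_1\mid D_1$, and both $\mu$ and $|\cdot|$ are multiplicative, so the sum factors. The first part becomes $2\big(\sum_{d_0\mid D_0}\mu(d_0)|d_0|^{-1}\big)\big(\sum_{d_1\mid D_1}\mu(d_1)|d_1|^{-2}\big)=2M_{D_0}(1)M_{D_1}(2)$ by the definition \eqref{E:functionM}, and the second part is simply $\sum_{d\mid D}\mu(d)|d|^{-2}=M_D(2)$. This recovers the coefficient $2M_{D_0}(1)M_{D_1}(2)-M_D(2)$ in the statement. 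I do not expect a genuine obstacle here; the only points needing care are setting up the inclusion-exclusion so that it matches the $\chi_d$ convention of Lemma~\ref{L:epstein-d}, and checking that the alignment of $d_0,d_1$ with $D_0,D_1$ makes the sum factor, both of which are routine once the multiplicative structure of a square-free $D$ is used.
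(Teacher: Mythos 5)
Your proposal is correct and follows essentially the same route as the paper's proof: the identity $\psi=\prod_{p\mid D}(1-\chi_p)=\sum_{d\mid D}\mu(d)\chi_d$, linearity of the twisted zeta function, term-by-term application of Lemma~\ref{L:epstein-d}, and factoring the resulting $\mu$-weighted sum over $d_0\mid D_0$, $d_1\mid D_1$ to obtain $2M_{D_0}(1)M_{D_1}(2)-M_D(2)$. Your explicit remarks that a finite sum of polynomials in $q^{-s}$ is again a polynomial (so the equivalences combine) and that the square-freeness of $D$ gives the unique splitting $d=d_0d_1$ are exactly the points the paper leaves implicit.
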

\begin{proof} Recall that $\chi_d$ denotes the characteristic function of the set \[
\{{\b x}\in L\setminus \{0\}\: : \: Q({\b x})\equiv 0 \pmod{d}\}.
\]
Notice that $\chi_d$ as a function of $d$ is multiplicative, i.e. if $(d,e)=1$ then $\chi_{de}=\chi_{d}\chi_e$.
The functions $\psi$ and $\chi_d$ are related by the identity
\[
\begin{aligned}
\psi=&\prod_{\scriptsize\begin{matrix} p|D\\ p\ \mathrm{prime} \end{matrix}} (1-\chi_p)\\
=&\sum_{d|D} \mu(d) \chi_d.
\end{aligned}
\]
It follows immediately that
\[
Z_L(s,\psi)=\sum_{d|D}\mu(d)Z_L(s,\chi_d).
\]
Hence, by Lemma \ref{L:epstein-d}, we have
\[
\begin{aligned}
Z_L(s,\psi) &\sim \sum_{\scriptsize\begin{matrix} d_0|D_0\\ d_1|D_1\end{matrix}} \mu(d_0 d_1) (2 |d_0|^{-1}|d_1|^{-2}-|d|^{-2}) Z_L(s)\\
&=\left(2M_{D_0}(1)M_{D_1}(2)-M_D(2)\right) Z_L(s).
\end{aligned}
\]

\end{proof}

Recall that the {\em content} of a vector ${\b x}\in L\setminus \{0\}$ is the monic polynomial $c({\b x})\in A$
such that $c({\b x}) A {\b x} =K{\b x}\cap L$. A vector ${\b x}\in L$ is {\em primitive} if $c({\b x})=1$. 

\begin{lemma}\label{L:epstein_phi2}
Let $\phi$ be the characteristic function of the set $\{{\b x}\in L \: : \:{\b x}\ \mathrm{primitive} \}$. Then
\[
Z_L(s,\psi)= M_D(2s) \zeta(2s) Z_L(s,\phi\psi),
\]
where $\zeta$ is the zeta function of $A$.
\end{lemma}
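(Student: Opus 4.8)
The plan is to exploit the unique factorization of a nonzero lattice vector into its content and a primitive vector. Concretely, every ${\b x}\in L\setminus\{0\}$ can be written uniquely as ${\b x}=c\,{\b y}$, where $c\in A$ is the monic content $c({\b x})$ and ${\b y}\in L$ is primitive; uniqueness follows because $A^\times=\F_q^\times$ together with the normalization that $c$ is monic. First I would record the two elementary identities attached to this factorization: since $Q$ is a quadratic form, $Q({\b x})=Q(c\,{\b y})=c^2Q({\b y})$, so that $|Q({\b x})|^{-s}=|c|^{-2s}|Q({\b y})|^{-s}$; and, because $\gcd(c^2Q({\b y}),D)=1$ if and only if both $\gcd(c,D)=1$ and $\gcd(Q({\b y}),D)=1$, the value of $\psi$ factors as $\psi(c\,{\b y})=\psi({\b y})$ when $\gcd(c,D)=1$, while the whole term vanishes when $\gcd(c,D)>1$.

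With these in hand I would substitute ${\b x}=c\,{\b y}$ into the defining series $Z_L(s,\psi)=\sum_{{\b x}\in L\setminus\{0\}}\psi({\b x})|Q({\b x})|^{-s}$ and split the sum over the pair $(c,{\b y})$. This yields a product
\[
Z_L(s,\psi)=\Bigl(\sum_{\substack{c\ \mathrm{monic}\\ \gcd(c,D)=1}}|c|^{-2s}\Bigr)\Bigl(\sum_{\substack{{\b y}\in L\ \mathrm{primitive}}}\psi({\b y})|Q({\b y})|^{-s}\Bigr).
\]
The second factor is exactly $Z_L(s,\phi\psi)$, because $\phi\psi$ is supported on primitive vectors and equals $\psi$ there. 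For the first factor I would invoke the Euler product $\zeta(s)=\prod_p(1-|p|^{-s})^{-1}$ of the zeta function of $A$ together with $M_D(s)=\prod_{p\mid D}(1-|p|^{-s})$ to identify $\sum_{c\ \mathrm{monic},\,(c,D)=1}|c|^{-2s}=\zeta(2s)M_D(2s)$, which gives the claimed equality.

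The only real point requiring care is the legitimacy of rearranging and factoring the series, which I would handle by noting absolute convergence for $\mathrm{Re}(s)$ sufficiently large (using the growth of $\alpha_k(L)$ from Proposition \ref{P:zcoeff}) and then appealing to the fact that all the functions involved are rational in $q^{-s}$, so an identity on a half-plane extends to an identity of rational functions. I expect the bookkeeping around $\psi$---verifying that the coprimality condition on $Q({\b x})$ decouples cleanly into independent conditions on $c$ and on $Q({\b y})$---to be the most delicate step, but it is a short divisibility argument rather than a genuine obstacle.
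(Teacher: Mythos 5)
Your proof is correct and is essentially the paper's own argument: the paper likewise collects the terms of $Z_L(s,\psi)$ by content, writing each vector with content $d$ as $d$ times a primitive vector (with terms for $\gcd(d,D)>1$ vanishing because $\psi$ kills them), and identifies $\sum_{(d,D)=1}|d|^{-2s}$ with $M_D(2s)\zeta(2s)$ via the Euler product. The only difference is that you spell out the convergence and rearrangement justification, which the paper leaves tacit.
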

\begin{proof}

Collecting the terms of equal content in the defining sum for $Z_L(s,\psi)$ we have
\[
\begin{aligned} 
Z_L(s,\psi)&= \sum_{(d,D)=1} \sum_{c({\b x})=d} \psi({\b x})|Q({\b x})|^{-s}\\
&=\sum_{(d,D)=1} \sum_{c(\b y)=1} \psi(\b y)|Q(\b y)|^{-s}|d|^{-2s}\\
&=(\sum_{(d,D)=1} |d|^{-2s}) Z_L(s,\phi\psi)\\
&= \left(\prod_{p|D} (1-|p|^{-2s})\right ) \zeta(2s) Z_L(s,\phi\psi)\\
&=M_D(2s) \zeta(2s) Z_L(s,\phi\psi).
\end{aligned}
\]

\end{proof} 
In order to prove the main theorem of this section, we need the following easy lemma from complex analysis.

\begin{lemma}\label{L:complex}
Let $g$ be a function analytic on the disk $|z|<R$, where $R>1$, and let $g^+(z)=(g(z)+g(-z))/2$
and $g^-(z)=(g(z)-g(-z))/2$. Let $f(z)=g(z)/(1-z^2)$ and consider the Taylor series expansion of $f(z)$ at 
$z=0$:
\[
f(z)=\sum_{n=0}^\infty a_n z^n \quad\mathrm{for}\ |z|<1.
\]
Then the sequences $\{a_{2k}\}$ and $\{a_{2k+1}\}$ converge and
\[
\lim_{k\to \infty} a_{2k}= g^+(1)\quad \mathrm{and}\quad  \lim_{k\to \infty} a_{2k+1}=g^-(1).
\]
\end{lemma}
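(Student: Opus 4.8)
The plan is to decompose $f(z)$ into its even and odd parts and use the geometric series for $1/(1-z^2)$. The key observation is that since $g$ is analytic on $|z|<R$ with $R>1$, both $g^+$ and $g^-$ are analytic on that same disk, and so their Taylor coefficients at the origin tend to $0$; the factor $1/(1-z^2)$ is what produces the nonzero limits by summing these coefficients.

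First I would write $g(z)=g^+(z)+g^-(z)$, where $g^+$ is even and $g^-$ is odd, and correspondingly split
\[
f(z)=\frac{g^+(z)}{1-z^2}+\frac{g^-(z)}{1-z^2}.
\]
Because $g^+(z)=\sum_k b_{2k}z^{2k}$ involves only even powers and $1/(1-z^2)=\sum_l z^{2l}$ also involves only even powers, the first summand contributes only to the even-indexed coefficients $a_{2k}$, and likewise the second summand contributes only to the odd-indexed $a_{2k+1}$. Thus I can treat the two parities completely separately. For the even part, Cauchy multiplication gives
\[
a_{2k}=\sum_{j=0}^{k} b_{2j},
\]
so that $\lim_{k\to\infty}a_{2k}=\sum_{j=0}^\infty b_{2j}=g^+(1)$, provided this series converges to the value of $g^+$ at $z=1$. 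The odd case is entirely analogous and yields $\lim_{k\to\infty}a_{2k+1}=g^-(1)$.

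The main point to justify carefully — and what I would flag as the crux — is the convergence of $\sum_j b_{2j}$ to $g^+(1)$. This is exactly where the hypothesis $R>1$ is essential: since $g^+$ is analytic on $|z|<R$ with $R>1$, its Taylor series at the origin has radius of convergence at least $R$, hence converges absolutely at $z=1$ (which lies strictly inside the disk), and by Abel's theorem (or simply by continuity of the sum of a power series inside its disk of convergence) its value there is $g^+(1)$. The same reasoning applies to $g^-$. I would therefore present the argument as: establish the partial-sum formulas for $a_{2k}$ and $a_{2k+1}$ by the Cauchy product, then invoke convergence of the Taylor series of $g^{\pm}$ at $z=1$ to pass to the limit. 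No delicate estimates are needed beyond the standard fact that a power series converges to the function it represents everywhere inside its disk of convergence.
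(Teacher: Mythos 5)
Your proof is correct. The paper in fact leaves this lemma as an exercise, so there is no argument of the authors to compare against; your argument supplies the missing proof and is the natural one: splitting $g$ into its even and odd parts, identifying $a_{2k}=\sum_{j=0}^{k}b_{2j}$ and $a_{2k+1}=\sum_{j=0}^{k}b_{2j+1}$ via the Cauchy product with $1/(1-z^2)=\sum_{l\ge 0}z^{2l}$ (legitimate for $|z|<1$, where both series converge absolutely), and then passing to the limit. One cosmetic remark: the appeal to Abel's theorem is unnecessary, as you yourself note parenthetically --- since $1<R$, the point $z=1$ lies strictly inside the disk of convergence of the Taylor series of $g^{\pm}$, so absolute convergence there, with sum $g^{\pm}(1)$, is automatic; citing Abel's theorem (which concerns boundary points) could only muddy an otherwise clean argument.
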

\begin{proof} We leave the proof as an exercise.
\end{proof}

We are now ready to prove the main result of this section.

\begin{theorem}\label{T:coeff-beta} Let $\beta_k(L)=\# \{{\b x}\in L\setminus\{0\}\: : \:\deg Q({\b x})=k,\ (Q({\b x}),D)=1,\ c({\b x})=1\}$. Then the sequences
$\beta_{\delta +2 m}(L) q^{-3m}$ and $\beta_{\delta +2 m+1}(L)q^{-3m}$ converge and 
\[
\begin{aligned}
\lim_{m\to \infty} & \frac{\beta_{\delta +2 m}(L)}{ q^{3m}} &=\frac{2M_{D_0}(1)M_{D_1}(2)-M_D(2)}{M_D(3)\zeta(3)} (1-q^{-1})q^{\delta+2}\\
\lim_{m\to \infty} & \frac{\beta_{\delta +2 m+1}(L)}{ q^{3m}} &=\frac{2M_{D_0}(1)M_{D_1}(2)-M_D(2)}{M_D(3)\zeta(3)} (1-q^{-2})q^{\delta+4}.
\end{aligned}\]
\end{theorem}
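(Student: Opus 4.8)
The plan is to recognize the numbers $\beta_k(L)$ as the Taylor coefficients of a single rational function and to read off their asymptotics from its dominant poles. Since $\phi\psi$ is the characteristic function of the primitive vectors $\mathbf{x}\in L$ with $(Q(\mathbf{x}),D)=1$, grouping the defining sum of $Z_L(s,\phi\psi)$ by degree gives, with $u=q^{-s}$,
\[
Z_L(s,\phi\psi)=\sum_{k\ge 0}\beta_k(L)\,u^k,
\]
so that $\beta_k(L)=[u^k]\,\Phi$, where $\Phi(u)$ denotes $Z_L(s,\phi\psi)$ viewed as a function of $u$. Writing $\kappa=2M_{D_0}(1)M_{D_1}(2)-M_D(2)$, I would first combine Lemmas~\ref{L:epstein_phi2} and~\ref{L:epstein_phi1}. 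Passing to the variable $u$ via $\zeta(2s)=(1-qu^2)^{-1}$ and $M_D(2s)=\prod_{p\mid D}(1-u^{2\deg p})=:M_D^*(u^2)$, the first lemma reads $Z_L(s,\psi)=\tfrac{M_D^*(u^2)}{1-qu^2}\,\Phi(u)$, while the second reads $Z_L(s,\psi)=\kappa\,Z_L^*(u)+\pi(u)$ for some polynomial $\pi$. Hence
\[
\Phi(u)=\frac{1-qu^2}{M_D^*(u^2)}\bigl(\kappa\,Z_L^*(u)+\pi(u)\bigr).
\]

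Next I would isolate the dominant singularities. Substituting the explicit rational expression for $Z_L^*$ obtained from Proposition~\ref{P:zcoeff} (whose only poles are the simple ones at $u=\pm q^{-3/2}$), and observing that $M_D^*(u^2)$ has all its zeros on $|u|=1$ and in particular does not vanish at $u=\pm q^{-3/2}$, one gets
\[
\Phi(u)\ \equiv\ \kappa\,q^{\delta+1}(q-1)\,\frac{(1-qu^2)\bigl(1+(q+q^2)u\bigr)\,u^\delta}{M_D^*(u^2)\,(1-q^3u^2)}
\]
modulo a rational function all of whose poles lie on $|u|=1$ (these come from $\pi$ and from the polynomial part $P$ of $Z_L^*$). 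The Taylor coefficients of such an error term grow at most polynomially in $k$, so they vanish after division by $q^{3m}$. Verifying this negligibility cleanly, while keeping track of the order (up to $r$) of the pole of $1/M_D^*(u^2)$ at $u=1$, is the one place that requires genuine care, and I regard it as the main, if routine, obstacle.

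Finally I would extract the two limits via the substitution $z=q^{3/2}u$, which sends the dominant poles to $z=\pm1$ and turns $1-q^3u^2$ into $1-z^2$. Factoring off $u^\delta$, the principal part above becomes $\kappa\,g(z)/(1-z^2)$ with
\[
g(z)=q^{\delta+1}(q-1)\,\frac{(1-q^{-2}z^2)\bigl(1+(q^{1/2}+q^{-1/2})z\bigr)}{M_D^*(q^{-3}z^2)},
\]
which is analytic on $|z|<q^{3/2}$ since the zeros of $M_D^*(q^{-3}z^2)$ lie on $|z|=q^{3/2}>1$. Its even part is $g^+(z)=q^{\delta+1}(q-1)(1-q^{-2}z^2)/M_D^*(q^{-3}z^2)$ and its odd part is $g^-(z)=(q^{1/2}+q^{-1/2})z\,g^+(z)$, so Lemma~\ref{L:complex}, applied after the rescaling $u\mapsto z$ (which multiplies the coefficient of $u^n$ by $q^{-3n/2}$), yields
\[
\lim_{m\to\infty}\frac{\beta_{\delta+2m}(L)}{q^{3m}}=\kappa\,g^+(1),\qquad \lim_{m\to\infty}\frac{\beta_{\delta+2m+1}(L)}{q^{3m}}=\kappa\,q^{3/2}g^-(1).
\]
Using $M_D^*(q^{-3})=M_D(3)$ and $1-q^{-2}=\zeta(3)^{-1}$, together with $q^{\delta+1}(q-1)=q^{\delta+2}(1-q^{-1})$ and $q^{3/2}(q^{1/2}+q^{-1/2})=q(q+1)$, a short simplification of $\kappa g^+(1)$ and of $\kappa q^{3/2}g^-(1)=\kappa q(q+1)g^+(1)$ identifies them with the two claimed expressions.
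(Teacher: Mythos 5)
Your proposal is correct and takes essentially the same route as the paper: both combine Lemmas~\ref{L:epstein_phi1} and~\ref{L:epstein_phi2} to write $Z_L^*(u,\phi\psi)$ as $\frac{1-qu^2}{M_D^*(u^2)}\bigl(\kappa Z_L^*(u)+\mathrm{polynomial}\bigr)$, isolate the dominant poles at $u=\pm q^{-3/2}$ (the remaining singularities lying on $|u|=1$ and contributing negligibly after division by $q^{3m}$), and extract the two limits via the rescaling $z=q^{3/2}u$ and Lemma~\ref{L:complex}. The only immaterial difference is bookkeeping: you substitute the explicit rational form of $Z_L^*(u)$ and evaluate $g^{+}(1)$, $g^{-}(1)$ directly, while the paper expresses the limits as $\frac{\kappa}{M_D(3)\zeta(3)}\lim_{m}\alpha_{\delta+2m}(L)q^{-3m}$ (resp.\ odd index) and quotes Proposition~\ref{P:zcoeff}; your final simplifications agree with the stated formulas.
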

\begin{proof} Notice that $\beta_k(L)$ is the coefficient of order $k$ of $Z_L^*(u,\phi\psi)$. 
To simplify notation, we let $C=2M_{D_0}(1)M_{D_1}(2)-M_D(2)$. Putting together Lemmas \ref{L:epstein_phi1}
and \ref{L:epstein_phi2}
we get
\[
M_D^*(u^2) \zeta^*(u^2) Z_L^*(u,\phi\psi)  = C Z_L^*(u)+ F(u)
\]
where $F(u)$ is a polynomial in $u$.  Notice that the zeros of $M_D^*(u^2)$ lie on the circle $|u|=1$ and that
$\zeta^*(u^2)=1/(1-qu^2)$ has no zeros. 
we get
\begin{equation}\label{E:ident}
Z_L^*(u,\phi\psi)=\frac{C Z_L^*(u)}{M_D^*(u^2) \zeta^*(u^2)} + G(u),
\end{equation}
where $G(u)$ is analytic on the disk $|u|<1$. The only poles of
$Z_L^*(u,\phi\psi)$ on this disk are $u=\pm q^{-3/2}$.

Notice
that $G(q^{-3/2} z)$ is analytic on the disk $|z|<q^{3/2}$, so its Taylor series at the origin converges
for $z=1$. Hence the Taylor coefficients of $G(q^{-3/2} z)$ tend to $0$.  Multiplying 
\eqref{E:ident} throughout by $u^\delta$ and applying Lemma \ref{L:complex} to $u^\delta Z_L^*(u)/(M_D^*(u^2) \zeta^*(u^2))$ 
with $u=q^{-3/2} z$, we get

\[
\begin{aligned}
\lim_{m\to \infty} \frac{\beta_{\delta+2m}(L)}{q^{3m}}& = \frac{C}{M_D(3)\zeta(3)}\lim_{m\to \infty}\frac{\alpha_{\delta+2m}(L)}{q^{3m}}
\\
& = \frac{C}{M_D(3)\zeta(3)} (1-q^{-1})q^{\delta+2}.
\end{aligned}
\]

Similarly,

\[
\begin{aligned}
\lim_{m\to \infty} \frac{\beta_{\delta+2m+1}(L)}{q^{3m}}& = \frac{C}{M_D(3)\zeta(3)}\lim_{m\to \infty}\frac{\alpha_{\delta+2m+1}(L)}{q^{3m}}
\\
& = \frac{C}{M_D(3)\zeta(3)} (1-q^{-2})q^{\delta+4}.
\end{aligned}
\]

\end{proof}

\section{Averages of class numbers}

Let $D\in A$ be a square-free polynomial of degree $\delta$. Throughout this section, the symbol $\sum^\bullet$ will denote a sum 
restricted to polynomials relatively prime to $D$.

Define
\[
\Psi_D (k,l)=|\{(x,y)\in A^2\, :\, x,y\ \mathrm{monic},\ \gcd(x,y)=1\ \mathrm{and}\ \gcd(D,xy)=1\}|.
\]
\begin{lemma}\label{L:Psi_series} The following identity holds in the ring of iterated power series\\
$\Q[[u]][[v]]$
\begin{equation}
\sum_{k,l\ge 0} \Psi_D (k,l) u^k v^l=\frac{M^*_D(u) M^*_D(v)(1-q uv)}{M^*_D(uv)(1-q u)(1-q v)}.
\end{equation}
\end{lemma}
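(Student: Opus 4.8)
The plan is to compute the double generating series by factoring it as an Euler product over the monic irreducible polynomials $p\in A$, exploiting the multiplicativity built into the coprimality conditions. Observe that $\Psi_D(k,l)$ counts pairs $(x,y)$ of monic polynomials with $\deg x=k$, $\deg y=l$, $\gcd(x,y)=1$, and $\gcd(D,xy)=1$ (the definition as written omits the degree constraints, but these are clearly intended, since otherwise the count is infinite). Since the conditions ``$x,y$ monic'', ``$\gcd(x,y)=1$'', and ``$p\nmid xy$ for $p\mid D$'' are all local conditions at each prime, I would first rewrite the series as
\[
\sum_{k,l\ge 0}\Psi_D(k,l)u^kv^l=\sum_{\substack{x,y\ \mathrm{monic}\\ \gcd(x,y)=1\\ \gcd(D,xy)=1}} u^{\deg x}v^{\deg y},
\]
and then factor the sum over the local contributions at each monic irreducible $p$.

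For a fixed prime $p$ of degree $d=\deg p$, the local factor records the contribution of the $p$-parts of $x$ and $y$. The coprimality $\gcd(x,y)=1$ forbids $p$ from dividing both, so at $p$ the allowed $p$-adic valuation pairs $(\ord_p x,\ord_p y)$ are those with $\min(\ord_p x,\ord_p y)=0$. If $p\nmid D$, summing $u^{d\cdot\ord_p x}v^{d\cdot\ord_p y}$ over all such pairs gives
\[
\sum_{\substack{i,j\ge 0\\ \min(i,j)=0}}(u^d)^i(v^d)^j=\frac{1}{1-u^d}+\frac{1}{1-v^d}-1=\frac{1-u^dv^d}{(1-u^d)(1-v^d)}.
\]
If $p\mid D$, the condition $\gcd(D,xy)=1$ forces $\ord_p x=\ord_p y=0$, so the local factor is simply $1$. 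The plan is then to take the product of these factors over all monic irreducibles, dividing the $p\mid D$ factors by the ``generic'' factor so as to write everything as a single clean product times correction terms at the primes dividing $D$.

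Concretely, I would write the full product as
\[
\prod_{p}\frac{1-u^{\deg p}v^{\deg p}}{(1-u^{\deg p})(1-v^{\deg p})}\ \cdot\ \prod_{p\mid D}\frac{(1-u^{\deg p})(1-v^{\deg p})}{1-u^{\deg p}v^{\deg p}},
\]
where the first product runs over all monic irreducibles (this accounts for the generic local factor at every prime) and the second product removes that factor at the primes dividing $D$ and replaces it by $1$. The first product over all $p$ is recognized via the Euler-product identity $\zeta^*(w)=\prod_p(1-w^{\deg p})^{-1}=1/(1-qw)$ for the zeta function of $A$; this yields $\zeta^*(uv)/(\zeta^*(u)\zeta^*(v))=(1-qu)(1-qv)/(1-quv)$ up to reciprocation, so I must be careful about which factors invert. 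The finite product over $p\mid D$ is exactly $M_D^*(u)M_D^*(v)/M_D^*(uv)$ by the product decomposition $M_D^*(w)=\prod_{p\mid D}(1-w^{\deg p})$ established just before Lemma~\ref{L:Psi_series}. Assembling these two pieces gives precisely the claimed right-hand side
\[
\frac{M_D^*(u)M_D^*(v)(1-quv)}{M_D^*(uv)(1-qu)(1-qv)}.
\]

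The main obstacle I anticipate is purely bookkeeping: correctly handling the inclusion–exclusion at the primes dividing both $x$ and $y$ (the $\min(i,j)=0$ condition) and keeping straight which zeta factors appear in the numerator versus the denominator once the Euler products are inverted. A secondary subtlety is justifying the rearrangement of the product into convergent factors in the iterated power series ring $\Q[[u]][[v]]$; since each local factor is $1+O(u^{\deg p})+O(v^{\deg p})$ and there are only finitely many irreducibles of each degree, the infinite product converges coefficientwise, so no analytic convergence argument is needed and the formal manipulation is legitimate.
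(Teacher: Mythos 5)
Your proof is correct, but it takes a genuinely different route from the paper's. The paper never factors over primes: it introduces the restricted Dirichlet series $\eta_D(s)=\sum^\bullet_{a\ \mathrm{monic}}|a|^{-s}$, records the factorization $\eta_D(s)=M_D(s)\zeta(s)$, and then groups the terms of $\eta_D(s)\eta_D(t)$ by $d=\gcd(a,b)$: writing $(a,b)=(da',db')$ with $\gcd(a',b')=1$ gives $\eta_D(s)\eta_D(t)=\eta_D(s+t)\sum_{k,l}\Psi_D(k,l)u^kv^l$ with $u=q^{-s}$, $v=q^{-t}$, and the lemma follows at once upon dividing by $\eta_D(s+t)$ and substituting $\zeta^*(w)=1/(1-qw)$. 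You instead compute the constrained double sum as an Euler product over monic irreducibles, with local factor $(1-u^dv^d)/\bigl((1-u^d)(1-v^d)\bigr)$ at $p\nmid D$ (from the $\min(i,j)=0$ inclusion--exclusion, which you evaluate correctly) and local factor $1$ at $p\mid D$; your reassembly is also right, since the product over all $p$ gives $(1-quv)/\bigl((1-qu)(1-qv)\bigr)$ and the corrections at $p\mid D$ give $M_D^*(u)M_D^*(v)/M_D^*(uv)$. What the paper's route buys is brevity and the complete absence of any infinite-product discussion: the gcd-grouping is a single bijective rearrangement, and the only multiplicative input is the one-line identity $\eta_D=M_D\zeta$. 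What your route buys is locality and transparency: the same computation accommodates other multiplicative constraints with no extra work, and it exhibits the paper's quotient $\eta_D(s)\eta_D(t)/\eta_D(s+t)$ as exactly the product of your local factors. Two of your side remarks are also on target: the paper's displayed definition of $\Psi_D(k,l)$ indeed omits the intended conditions $\deg x=k$, $\deg y=l$, and your observation that each local factor is $1$ plus terms of degree at least $\deg p$, with finitely many irreducibles of each degree, is precisely the justification needed for coefficientwise convergence of the product in $\Q[[u]][[v]]$.
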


\begin{proof} Define
\[
\eta_D(s)=\sum_{a\ \textrm{monic}}^\bullet |a|^{-s}.
\]
It follows immediately from the Euler product decomposition of $\zeta(s)$ that
we have a factorization
\begin{equation}\label{E:eta}
\eta_D(s)=M_D(s) \zeta(s),
\end{equation}
where $M_D(s)$ is the polynomial in $q^{-s}$ defined in \eqref{E:functionM}. 

With this notation, we have
\[
\begin{aligned}
\eta_D(s)\eta_D(t)&=\sum_{a, b\ \textrm{monic}}^\bullet |a|^{-s}|b|^{-t}\\
&=\sum_{d\ \textrm{monic}}^\bullet \sum_{\scriptsize \begin{matrix} a, b\ \textrm{monic}\\
\gcd(a,b)=d\end{matrix}}^\bullet |a|^{-s}|b|^{-t}\\
&=\sum_{d\ \textrm{monic}}^\bullet |d|^{-(s+t)}\sum_{\scriptsize \begin{matrix} a, b\ \textrm{monic}\\
\gcd(a,b)=1\end{matrix}}^\bullet |a|^{-s}|b|^{-t}\\
&=\eta_D(s+t) \sum_{\scriptsize \begin{matrix} a, b\ \textrm{monic}\\
\gcd(a,b)=1\end{matrix}}^\bullet |a|^{-s}|b|^{-t}.\\
\end{aligned}
\]
Letting $u=q^{-s}$ and $v=q^{-t}$ we get
\[
\eta_D^*(u)\eta_D^*(v)=\eta_D^*(uv)\sum_{k,l\ge 0} \Psi_D (k,l) u^k v^l,
\]
which, combined with \eqref{E:eta}, shows the Lemma.

\end{proof}

For the next computation, we shall need the following variant of
Lemma \ref{L:complex}.

\begin{lemma}\label{L:complex2}
Let $g$ be an analytic function on the disk $|z|<R$, where $R>1$. Let $f(z)=g(z)/(1-z)$ and consider the Taylor series expansion of $f(z)$ at 
$z=0$:
\[
f(z)=\sum_{n=0}^\infty a_n z^n \quad\mathrm{for}\ |z|<1.
\]
Then the sequence $\{a_n\}$ converges and $
\lim_{n\to \infty} a_{n}= g(1)$.
\end{lemma}
\begin{proof} We leave the proof as an exercise.
\end{proof}
\begin{lemma}\label{L:sum}
\begin{equation}\label{E:sum}
\lim_{l\to\infty} q^{-l}  \sum_{k=0}^\infty \Psi_D (k,l) q^{-2 k}= \frac{M_D(1)M_D(2)}{M_D(3)}\cdot\frac{\zeta(2)}{\zeta(3)}.
\end{equation}

\end{lemma}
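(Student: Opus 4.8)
**

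The plan is to compute the limit by combining the generating-function identity from Lemma~\ref{L:Psi_series} with the single-variable Tauberian result of Lemma~\ref{L:complex2}. First I would introduce the double generating function
\[
\Phi(u,v)=\sum_{k,l\ge 0}\Psi_D(k,l)\,u^k v^l=\frac{M^*_D(u)M^*_D(v)(1-quv)}{M^*_D(uv)(1-qu)(1-qv)},
\]
which is already supplied by Lemma~\ref{L:Psi_series}. The quantity we want is a limit over $l$ of the inner sum $\sum_{k\ge 0}\Psi_D(k,l)q^{-2k}$ weighted by $q^{-l}$. The natural move is to \emph{specialize the $u$-variable} by setting $u=q^{-2}$, so that the coefficient of $v^l$ in $\Phi(q^{-2},v)$ becomes exactly $\sum_{k\ge 0}\Psi_D(k,l)q^{-2k}$. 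This reduces the problem to a single-variable power series in $v$ whose $l$-th Taylor coefficient, after multiplying by $q^{-l}$, is precisely what we are taking the limit of.

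Next I would isolate the pole in $v$ that governs the large-$l$ asymptotics. Substituting $u=q^{-2}$ into the formula gives
\[
\Phi(q^{-2},v)=\frac{M^*_D(q^{-2})\,M^*_D(v)\,(1-q^{-1}v)}{M^*_D(q^{-2}v)\,(1-q^{-1})\,(1-qv)}.
\]
The only singularity relevant to the asymptotics of the coefficients sits at $v=q^{-1}$, coming from the factor $(1-qv)^{-1}$; all the other factors $M^*_D(v)$, $M^*_D(q^{-2}v)$, and $(1-q^{-1}v)$ are analytic and nonvanishing in a disk of radius slightly larger than $q^{-1}$ (recall $M^*_D$ has all its zeros on $|v|=1$, so $M^*_D(q^{-2}v)$ is safe in a neighborhood of $v=q^{-1}$). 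To put this in the shape required by Lemma~\ref{L:complex2}, I would rescale by setting $v=q^{-1}z$, writing $\Phi(q^{-2},q^{-1}z)=g(z)/(1-z)$ where $g(z)$ collects all the analytic factors. Then $g$ is analytic on a disk $|z|<R$ with $R>1$, and the $l$-th Taylor coefficient of $g(z)/(1-z)$ equals $q^{-l}\sum_{k}\Psi_D(k,l)q^{-2k}$, which is exactly the sequence whose limit we seek.

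Finally, Lemma~\ref{L:complex2} tells me this sequence converges to $g(1)$, so I just evaluate $g$ at $z=1$, i.e.\ evaluate the analytic part at $v=q^{-1}$. This gives
\[
\lim_{l\to\infty}q^{-l}\sum_{k\ge 0}\Psi_D(k,l)q^{-2k}
=\frac{M^*_D(q^{-2})\,M^*_D(q^{-1})\,(1-q^{-2})}{M^*_D(q^{-3})\,(1-q^{-1})}.
\]
Then it remains to translate back from the starred notation to the $M_D(s)$ and $\zeta(s)$ notation: $M^*_D(q^{-s})=M_D(s)$, and the factor $(1-q^{-2})/\bigl(M^*_D(q^{-3})(1-q^{-1})\bigr)$ should reorganize into $\zeta(2)/\zeta(3)$ times $1/M_D(3)$, using $\zeta(s)=1/(1-q^{1-s})$ for $A=\F_q[t]$. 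The one point requiring care—and the step I expect to be the main obstacle—is verifying that $g(z)$ genuinely extends analytically past $|z|=1$, i.e.\ that none of the factors in the denominator $M^*_D(q^{-2}v)$ vanishes for $|v|$ up to slightly beyond $q^{-1}$; this uses the fact that the zeros of $M^*_D$ lie on the unit circle, so $M^*_D(q^{-2}v)=0$ forces $|v|=q^{2}$, comfortably outside the disk. After that check, the algebraic simplification matching the right-hand side $\frac{M_D(1)M_D(2)}{M_D(3)}\cdot\frac{\zeta(2)}{\zeta(3)}$ is routine bookkeeping with the zeta factors.
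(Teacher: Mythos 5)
Your proposal is correct and takes essentially the same approach as the paper: the paper's proof likewise substitutes $u=q^{-2}$ and $v=q^{-1}z$ into the identity of Lemma~\ref{L:Psi_series}, observes that the resulting function of $z$ has a simple pole at $z=1$ with all other poles on $|z|=q^3$, and applies Lemma~\ref{L:complex2} to obtain the limit as the value of the analytic part at $z=1$, yielding exactly your expression $M^*_D(q^{-2})M^*_D(q^{-1})(1-q^{-2})/\bigl(M^*_D(q^{-3})(1-q^{-1})\bigr)$ before converting to the $M_D$ and $\zeta$ notation.
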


\begin{proof} Let $u=q^{-2}$ and $v=q^{-1} z$, where $z$ is a complex variable, and substitute in \eqref{L:Psi_series}. 
We get 
\begin{equation}\label{E:Psi_series2}
\sum_{l=0}^\infty \left [q^{-l}\sum_{k=0}^{\infty} \Psi_D (k,l) q^{-2k}\right ] z^l= \frac{M^*_D(q^{-2}) M^*_D(q^{-1}z)(1-q^{-2} z)}{M^*_D(q^{-3} z)(1-q^{-1})(1-z)}.
\end{equation}
(It is easy to see that the inner sum converges uniformly in $l$ since $\Psi_D (k,l) q^{-l}$ is bounded by $q^{k}$.)
Notice that the function on the right-hand side of \eqref{E:Psi_series2} has a simple pole at $z=1$ and 
that all other poles lie on the circle $|z|=q^3$. Hence, applying Lemma \ref{L:complex2}, we get
\[
\begin{aligned}
\lim_{l\to\infty} q^{-l}  \sum_{k=0}^\infty \Psi_D (k,l) q^{-2 k}=& \left. \frac{M^*_D(q^{-2}) M^*_D(q^{-1}z)(1-q^{-2} z)}{M^*_D(q^{-3} z)(1-q^{-1})}\right|_{z=1}\\
=& \frac{M_D(1)M_D(2)}{M_D(3)}\cdot\frac{\zeta(2)}{\zeta(3)}.
\end{aligned}
\]
\end{proof}

For each $b\in A\setminus\{0\}$, we define 

\[
\chi_b(a)=\legendre{b}{a}
\]
for all monic polynomials $a$, where $\legendre{b}{a}$ is the Legendre symbol. 
The Dirichlet $L$-series $L(s,\chi_b)$ attached to $\chi_b$ is defined by
\[
L(s,\chi_b)=\sum_{a\ \mathrm{monic}} \chi_b(a) |a|^{-s}.
\]
If $b$ is not a square, $L(s,\chi_b)$ is actually a polynomial of
degree $\le \deg b-1$ in $q^{-s}$ (see \cite[Proposition 4.3]{Rosen:2002kv}). In this case we write

\[
L(s,\chi_b)=\sum_{k=0}^{\deg b-1} c_k(\chi_b)q^{-s k},
\]
where
\[
c_k(\chi_b)=\sum_{\scriptsize \begin{matrix} a\ \mathrm{monic} \\ \deg a=k \end{matrix}} \chi_b(a).
\]

Our task will be to compute sums of coefficients $c_k(\chi_{D  m})$ as $m$ runs
over all polynomials of fixed degree $l$ prime to $D $. We have

\begin{equation}
\sum_{\deg m=l}^\bullet c_k(\chi_{D  m})=\sum_{\scriptsize \begin{matrix} a\ \mathrm{monic} \\ \deg a=k \end{matrix}}^\bullet \sum_{\deg m=l}^\bullet \legendre{m}{a}\legendre{D }{a}.
\end{equation}
(Recall that a bullet ${\ }^\bullet$ above the summation symbol indicates that the sum
is restricted to polynomials prime to $D $.)

Assume that $\deg m=l\ge k+\delta$. 
If $a$ is not a square, then 
\[
\sum_{\deg m=l}^\bullet \legendre{m}{a}=0,
\]
since in this case $\legendre{}{a}$ is a nontrivial
character on $(A/D  a A)^\times$. If $a$ is a square, then
the summation above is just the number of polynomials $m$ of degree $l$
prime to $D $ and $a$. Thus

\begin{equation}
\sum_{\deg m=l}^\bullet c_k(\chi_{D  m})= (q-1) \Psi_{D }(k/2,l)\quad\mathrm{for}\ l\ge k+\delta,
\end{equation}
with the convention that $\Psi_{D }(k/2,l)=0$ if $k/2$ is not an integer.
The factor $q-1$ on the right-hand side accounts for the fact that
the sum includes all polynomials $m$ of degree $l$, not only the monic ones.

Thus we have so far

\begin{equation}\label{E:sumL}
\begin{split}
\sum_{\deg m=l}^\bullet L(s,\chi_{D  m})=(q-1)\sum_{k=0}^\floor{(l-\delta)/2}\Psi_{D }(k,l)q^{-2s k} 
\\ 
+\sum_{k=l-\delta+1}^{l+\delta-1} \left(\sum_{\deg m=l}^\bullet c_k(\chi_{D  m})\right)q^{-s k}.
\end{split}
\end{equation}

\begin{lemma}\label{L:binom} $|c_k(\chi_{D  m})|\le \binom{l+\delta-1}{k}q^{k/2}$.
\end{lemma}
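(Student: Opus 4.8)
The plan is to express $c_k(\chi_{Dm})$ as an elementary symmetric function of the inverse roots of the $L$-polynomial and then to control those roots by the Riemann Hypothesis for the associated hyperelliptic curve. Since $D$ is square-free and non-constant and $m$ is prime to $D$, the polynomial $Dm$ is not a square; hence $\chi_{Dm}$ is a nontrivial quadratic character and, by the result of \cite{Rosen:2002kv} quoted above, $L(s,\chi_{Dm})$ is a polynomial in $u=q^{-s}$ of degree at most $\deg(Dm)-1=\delta+l-1$.

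Set $N=\delta+l-1$ and factor this polynomial over $\C$ as
\[
L^*(u,\chi_{Dm})=\prod_{j=1}^{N}(1-\gamma_j u),
\]
where the $\gamma_j$ are the inverse roots, with the convention that $\gamma_j=0$ accounts for any drop in the actual degree. Comparing coefficients, $c_k(\chi_{Dm})$ is, up to sign, the $k$-th elementary symmetric function of the $\gamma_j$:
\[
c_k(\chi_{Dm})=(-1)^k\, e_k(\gamma_1,\ldots,\gamma_N).
\]

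The one substantive input is the bound on the $|\gamma_j|$. By the Riemann Hypothesis for curves over finite fields, applied to the hyperelliptic curve $y^2=Dm$ --- precisely the case treated by the elementary methods of Stepanov \cite{Stepanov:1972uq} --- every nonzero inverse root satisfies $|\gamma_j|=q^{1/2}$, so in all cases $|\gamma_j|\le q^{1/2}$. Since $e_k$ is a sum of $\binom{N}{k}$ products of $k$ distinct $\gamma_j$, each of modulus at most $q^{k/2}$, I obtain
\[
|c_k(\chi_{Dm})|=|e_k(\gamma_1,\ldots,\gamma_N)|\le \binom{N}{k}q^{k/2}=\binom{l+\delta-1}{k}q^{k/2},
\]
which is the asserted bound.

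No genuine obstacle arises beyond invoking the Riemann Hypothesis (already flagged as the essential ingredient in the introduction); the remaining step is the elementary fact that a uniform bound on all roots of a polynomial yields a binomial bound on its coefficients. One need only check the boundary cases separately: the inequality holds at $k=0$, where $c_0=\chi_{Dm}(1)=1$ and $\binom{N}{0}=1$, and at $k>N$, where both sides vanish because $L^*(u,\chi_{Dm})$ has degree at most $N$.
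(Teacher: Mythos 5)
Your strategy is the same as the paper's: realize $c_k(\chi_{Dm})$ as (up to sign) the $k$-th elementary symmetric function of the inverse roots of $L^*(u,\chi_{Dm})$, bound every inverse root by $q^{1/2}$, and count the $\binom{l+\delta-1}{k}$ monomials. The gap is in the one substantive step, the root bound. You assert that the Riemann Hypothesis ``applied to the hyperelliptic curve $y^2=Dm$'' gives $|\gamma_j|=q^{1/2}$ for every nonzero inverse root of $L^*(u,\chi_{Dm})$. That claim is false, because the Dirichlet $L$-polynomial $L^*(u,\chi_{Dm})$ is not the $L$-polynomial of a curve, and RH does not apply to it directly. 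Two discrepancies intervene. First, $\chi_{Dm}$ is in general imprimitive: writing $m=m_0m_1^2$ with $m_0$ square-free (note $K(\sqrt{Dm})=K(\sqrt{Dm_0})$, so the relevant curve is $y^2=Dm_0$, not $y^2=Dm$), the character vanishes at primes dividing $m_1$, which introduces the Euler factors $\prod_{p\mid m_1}\bigl(1-\legendre{m_0D}{p}u^{\deg p}\bigr)$, whose inverse roots have modulus exactly $1$, not $q^{1/2}$. Second, even for the primitive character the Dirichlet series omits the place at infinity, so $L^*(u,\chi_{Dm_0})$ differs from the curve polynomial $L_E(u)$, $E=K(\sqrt{Dm_0})$, by a factor $f(u)\in\{1-u^2,\,1-u,\,(1-u)^2\}$ determined by the behavior of $\infty$ in $E$; this factor again contributes inverse roots of modulus $1$. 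RH (Theorem 5.10 of \cite{Rosen:2002kv}) gives $|\pi_i|=q^{1/2}$ only for the $2g$ inverse roots of the factor $L_E(u)$.

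This missing bookkeeping is precisely what the paper's proof supplies: it factors $L^*(u,\chi_{Dm})=f(u)\,L_E(u)\prod_{p\mid m_1}\bigl(1-\legendre{m_0D}{p}u^{\deg p}\bigr)$, applies RH to $L_E$, and observes that all remaining inverse roots have modulus $1$. Your conclusion survives because the only thing your counting argument uses is the inequality $|\gamma_j|\le q^{1/2}$, which the modulus-$1$ roots also satisfy (since $q\ge 3$); the elementary-symmetric-function estimate and the boundary cases $k=0$ and $k>l+\delta-1$ are fine and identical to the paper's. So the lemma is true and your argument is repaired by inserting the factorization above, but as written the appeal to RH is made for the wrong object, and the asserted equality $|\gamma_j|=q^{1/2}$ for all nonzero inverse roots is incorrect.
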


\begin{proof} Let $L^*(u,\chi_{D  m})=L(s,\chi_{D  m})$, where $u=q^{-s}$, and write
$m=m_0 m_1^2$, where $m_0$ is square-free. Let $E=K(\sqrt{D  m})$. We get
from the Euler product factorization of  $L(s,\chi_{D  m})$  the identity

\[
L^*(u,\chi_{D  m})=f(u) L_E(u) \prod_{p\mid m_1}\left(1-\legendre{m_0D }{p}u^{\deg p}\right)
\]
where $f(u)=(1-u^2)$, $(1-u)$ or $(1-u)^2$ according to whether the prime at infinity of $K$
is inert, ramified or split in $E$ and $L_E(u)$ is the $L$-function of $E$.
The polynomial $L_E(u)$ has a factorization of the form
\[
L_E(u)=\prod_{i=1}^{2g}(1-\pi_i u)
\]
where the $g$ is the genus of $E$.  By the Riemann Hypothesis (see \cite[Theorem 5.10]{Rosen:2002kv}), the inverse roots $\pi_i$ of $L_E(u)$ satisfy $|\pi_i|=q^{1/2}$. 
Let $\pi_{2g+1},\ldots, \pi_{n}$ (where $n\le l+\delta-1$) be the inverses of the remaining
roots of  $L^*(u,\chi_{D  m})$. Notice that $|\pi_i|=1$ for $i=2g+1,\dots,n$. With this notation we have
\[
L^*(u,\chi_{D  m})=\prod_{i=1}^{n}(1-\pi_i u),
\]
so the coefficient $c_k(\chi_{D  m})$ is given by
\[
c_k(\chi_{D  m})=s_k(\pi_1,\cdots,\pi_n)
\]
where $s_k$ is the degree $k$ elementary symmetric function in $n$ variables.
Hence
\[
\begin{aligned}
|c_k(\chi_{D  m})|& \le \binom{n}{k} \sup\{|\pi_{i_{1}}\cdots \pi_{i_{k}}|\}\\
&\le \binom{n}{k} q^{k/2}\\
&\le \binom{l+\delta-1}{k}q^{k/2}.
\end{aligned}
\]
\end{proof}
We can now prove the main theorem of this section
\begin{theorem}\label{T:L_avg}
\begin{equation}\label{E:L_avg}
\lim_{l\to\infty} \frac{1}{q^l} \sum_{\deg m=l}^\bullet L(1,\chi_{D  m})=
 \frac{M_D(1)M_D(2)}{M_D(3)}q (1-q^{-2}).
\end{equation}
\begin{proof} 
We shall first estimate the second sum on the right-hand side of \eqref{E:sumL} at $s=1$ using Lemma \ref{L:binom}. Take
$l$ large enough so that $l-\delta+1>(l+\delta-1)/2$ ($l\ge 3\delta$ suffices). 
For ${l+\delta-1}\ge k\ge {l-\delta+1}$ we have
\[
\binom{l+\delta-1}{k}\le \binom{l+\delta-1}{l-\delta+1}=\binom{l+\delta-1}{2\delta-2}.
\]
Using Lemma \ref{L:binom} we get
\begin{equation}
\left|\sum_{k=l-\delta+1}^{l+\delta-1} \left(\sum_{\deg m=l}^\bullet c_k(\chi_{D  m})\right)q^{-k}\right|\le 
q^l(q-1) (2\delta-1)\binom{l+\delta-1}{2\delta-2} q^{-(l-\delta+1)/2}.
\end{equation}

Hence

\begin{equation}\label{E:tail_estimate}
\frac{1}{q^l}\left|\sum_{k=l-\delta+1}^{l+\delta-1} \left(\sum_{\deg m=l}^\bullet c_k(\chi_{D  m})\right)q^{-k}\right|\le 
P(l) q^{-l/2}
\end{equation}
where we have set
\[
P(l)=(q-1)q^{(\delta-1)/2} (2\delta-1)\binom{l+\delta-1}{2\delta-2}.
\]
Notice that $P(l)$ is a polynomial in $l$ of degree $2\delta-2$, so taking limits in \eqref{E:tail_estimate},
we get
\[
\lim_{l\to \infty} \frac{1}{q^l}\sum_{k=l-\delta+1}^{l+\delta-1} \left(\sum_{\deg m=l}^\bullet c_k(\chi_{D  m})q^{-k}\right)=0.
\]
Therefore, from \eqref{E:sumL}
\begin{equation}\label{E:semifinal}
\lim_{l\to\infty} \frac{1}{q^l}\sum_{\deg m=l}^\bullet L(1,\chi_{D  m})=(q-1)\lim_{l\to\infty} \frac{1}{q^l}\sum_{k=0}^\floor{(l-\delta)/2}\Psi_{D }(k,l)q^{-2 k}.
\end{equation}

In order to conclude, we inspect the ``tail'' of the series in \eqref{E:sum}. Using the obvious inequality
$\Psi_D (k,l)\le q^{k+l}$ we get
\[
\frac{1}{q^l}\sum_{k=\floor{(l-\delta)/2}+1}^\infty \Psi_D (k,l) q^{-2 k}\le \sum_{k=\floor{(l-\delta)/2}+1}^\infty q^{-k}.
\]
The right-hand side of the above inequality  tends to zero as $l\to\infty$. The theorem follows from \eqref{E:semifinal} and
\eqref{E:sum}.

\[
\begin{aligned}
\lim_{l\to\infty} \frac{1}{q^l}\sum_{\deg m=l}^\bullet L(1,\chi_{D  m})&=(q-1)\frac{M_D(1)M_D(2)}{M_D(3)}\cdot\frac{\zeta(2)}{\zeta(3)}\\
&= \frac{M_D(1)M_D(2)}{M_D(3)}q (1-q^{-2}).
\end{aligned}
\]

\end{proof}
\end{theorem}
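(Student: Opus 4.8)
The plan is to start from the exact formula \eqref{E:sumL} for $\sum_{\deg m = l}^\bullet L(s,\chi_{Dm})$, specialize to $s=1$, divide by $q^l$, and let $l\to\infty$. This writes the normalized average as a sum of two pieces: a \emph{main} sum built from the counting function $\Psi_D(k,l)$ and running up to $k=\lfloor(l-\delta)/2\rfloor$, and a \emph{tail} sum over $l-\delta+1\le k\le l+\delta-1$ involving the coefficients $c_k(\chi_{Dm})$. First I would show the tail is negligible after normalization, then I would replace the truncated main sum by the full series whose limit is computed in Lemma \ref{L:sum}, and finally I would identify the resulting constant with the right-hand side of \eqref{E:L_avg}.

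For the tail, I would invoke the bound $|c_k(\chi_{Dm})|\le\binom{l+\delta-1}{k}q^{k/2}$ from Lemma \ref{L:binom}, summed over the at most $(q-1)q^l$ polynomials $m$ of degree $l$. Taking $l$ large enough that $l-\delta+1>(l+\delta-1)/2$, the binomial coefficients in the relevant range are all dominated by $\binom{l+\delta-1}{2\delta-2}$, while $q^{-k/2}\le q^{-(l-\delta+1)/2}$ throughout the range. After dividing by $q^l$, the tail is therefore bounded by $P(l)\,q^{-l/2}$ for a polynomial $P$ in $l$ of degree $2\delta-2$; since polynomial growth is beaten by $q^{-l/2}$, this contribution tends to $0$.

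Next I would compare the truncated main sum with the full series of Lemma \ref{L:sum}. The discrepancy is the upper tail $\frac{1}{q^l}\sum_{k>\lfloor(l-\delta)/2\rfloor}\Psi_D(k,l)q^{-2k}$, which by the crude bound $\Psi_D(k,l)\le q^{k+l}$ is at most $\sum_{k>\lfloor(l-\delta)/2\rfloor}q^{-k}\to 0$. Hence the truncated sum and the full series share the same limit, so by \eqref{E:sum} the whole average converges to $(q-1)\,\frac{M_D(1)M_D(2)}{M_D(3)}\,\frac{\zeta(2)}{\zeta(3)}$.

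Finally I would simplify using the explicit form $\zeta(s)=(1-q^{1-s})^{-1}$ of the zeta function of $A$, which gives $\zeta(2)/\zeta(3)=(1-q^{-2})/(1-q^{-1})=1+q^{-1}$ and hence $(q-1)(1+q^{-1})=q(1-q^{-2})$, producing exactly the right-hand side of \eqref{E:L_avg}. The main obstacle is the tail estimate: it is the only step that genuinely uses the Riemann Hypothesis (through Lemma \ref{L:binom}), and the delicate point is that the $c_k$ are evaluated for $k$ of size comparable to $l$, so one must verify that the square-root cancellation $q^{-k/2}$ really dominates both the binomial coefficient and the factor $q^l$ arising from the number of summands $m$.
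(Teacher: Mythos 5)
Your proposal is correct and follows essentially the same route as the paper's own proof: the same decomposition of \eqref{E:sumL} into main and tail sums, the same tail bound $P(l)\,q^{-l/2}$ via Lemma \ref{L:binom}, the same comparison of the truncated sum with the full series of Lemma \ref{L:sum} using $\Psi_D(k,l)\le q^{k+l}$, and the same final simplification $(q-1)\,\zeta(2)/\zeta(3)=q(1-q^{-2})$. No gaps; your explicit remark that the square-root cancellation must beat both the binomial factor and the $q^l$ count of summands is exactly the delicate point the paper's estimate \eqref{E:tail_estimate} handles.
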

\begin{remark}\label{R:L_avg} Theorem \ref{T:L_avg} can be restated in perhaps a more suggestive way
as a limit of averages. For $l\ge \delta$ the number of polynomials of degree $l$
prime to $D $ is $(q-1)q^l M_D(1)$. Thus \eqref{E:L_avg} becomes
\begin{equation}\label{E:L_avg2}
\lim_{l\to\infty} \frac{1}{(q-1)q^l M_D(1)} \sum_{\deg m=l}^\bullet L(1,\chi_{D  m}) =
\frac{M_D(2) \zeta(2) }{M_D(3) \zeta(3)}.\\
\end{equation}
(Compare with \cite[Theorem 1.4]{Hoffstein:1992ao}.)
\end{remark}

\begin{corollary}\label{C:classno_avg} Let $h(mD )=|\Pic(A[\sqrt{mD }]|$. Then
\begin{equation}
\lim_{l\to \infty}\frac{1}{q^{3l}} \sum_{\deg m=\delta+2l+1}^\bullet h(mD ) = q^{2\delta} (q^2-1)\frac{M_D(1)M_D(2)}{M_D(3)}.
\end{equation}
\end{corollary}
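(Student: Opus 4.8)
The goal is to evaluate the average of the class number $h(mD)=|\Pic(A[\sqrt{mD}])|$ over polynomials $m$ of degree $\delta+2l+1$ prime to $D$, and the natural strategy is to relate this class number to the special value $L(1,\chi_{Dm})$ so that Theorem~\ref{T:L_avg} applies directly. The plan is to invoke the analytic class number formula for the quadratic function field $E=K(\sqrt{mD})$, which expresses the class number (or order of the Picard group of the relevant order) as a multiple of $L(1,\chi_{Dm})$ times an explicit power of $q$ depending on $\deg(mD)$ and on the splitting behavior at infinity. Since $m$ has degree $\delta+2l+1$ and $D$ has degree $\delta$, the product $mD$ has degree $2\delta+2l+1$, which is \emph{odd}; this parity forces the prime at infinity to ramify in $E$, fixing the archimedean factor $f(u)=(1-u)$ that appeared in the proof of Lemma~\ref{L:binom} and making the relation between $h(mD)$ and $L(1,\chi_{Dm})$ uniform across all such $m$.

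First I would write down the precise class number formula in the form $h(mD)=c\, q^{g}\,L(1,\chi_{Dm})$, where $g$ is the genus of $E$ and $c$ is an elementary constant; because $mD$ is squarefree of odd degree $2\delta+2l+1$, the genus is $g=(2\delta+2l+1-1)/2=\delta+l$ (up to the standard adjustment for the ramified infinite place), so $q^g$ contributes a factor of size $q^{\delta+l}$. Substituting $\deg m=\delta+2l+1$ into the statement of Theorem~\ref{T:L_avg} (there the degree variable was $l$, so here I set that variable equal to $\delta+2l+1$) gives
\[
\lim_{l\to\infty}\frac{1}{q^{\delta+2l+1}}\sum_{\deg m=\delta+2l+1}^\bullet L(1,\chi_{Dm})=\frac{M_D(1)M_D(2)}{M_D(3)}q(1-q^{-2}).
\]
Then I would multiply through by the explicit power of $q$ coming from the class number formula, track the normalization $1/q^{3l}$ demanded in the Corollary, and collect the resulting powers of $q$; the bookkeeping should produce exactly the factor $q^{2\delta}(q^2-1)$ in front of $M_D(1)M_D(2)/M_D(3)$.

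The main obstacle is getting the class number formula exactly right, including the precise constant $c$, the contribution of the unit group, and the subtlety that $\Pic(A[\sqrt{mD}])$ is the Picard group of a possibly \emph{non-maximal} order when $m$ is not squarefree. Writing $m=m_0m_1^2$, one must relate $\Pic(A[\sqrt{m_0D}])$ (the maximal-order, or at least integrally-closed, case) to $\Pic(A[\sqrt{mD}])$ via the conductor $m_1$; this introduces exactly the Euler factors $\prod_{p\mid m_1}(1-(\tfrac{m_0D}{p})u^{\deg p})$ already seen in Lemma~\ref{L:binom}, which is why the full $L$-function $L(u,\chi_{Dm})$ (rather than the curve $L$-function $L_E(u)$) is the right object. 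I would argue that these conductor corrections, together with the oscillating non-Weil roots $\pi_i$ of absolute value $1$, contribute only to lower-order terms that vanish after dividing by $q^{3l}$ and taking the limit, so the dominant contribution is governed solely by the averaged special value; the delicate point is confirming that the power of $q$ absorbed by the class number formula matches the shift from $q^{\delta+2l+1}$ in Theorem~\ref{T:L_avg} to the $q^{3l}$ normalization here, which is precisely where the exponent $2\delta$ and the factor $q^2-1$ are forced.
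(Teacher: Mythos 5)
Your proposal follows essentially the same route as the paper: the paper simply quotes the exact identity $L(1,\chi_{mD})=\frac{q^{1/2}}{|mD|^{1/2}}\,h(mD)=q^{-l-\delta}h(mD)$ (Hoffstein--Rosen, Theorem 0.6(i)), multiplies by the resulting power of $q$, and applies Theorem \ref{T:L_avg} with the degree variable set to $\delta+2l+1$; your bookkeeping of the exponents ($q^{\delta+l}/q^{3l}=q^{2\delta+1}\cdot q^{-(\delta+2l+1)}$, yielding $q^{2\delta+1}\cdot q(1-q^{-2})=q^{2\delta}(q^2-1)$) is exactly what the paper does. One correction to your final paragraph, however: the relation between $h(mD)$ and $L(1,\chi_{Dm})$ is an \emph{exact} identity valid for every $m$ prime to $D$, non-square-free included, precisely because $L(s,\chi_{Dm})$ is the imprimitive $L$-function whose conductor Euler factors $\prod_{p\mid m_1}\bigl(1-\legendre{m_0D}{p}|p|^{-\deg p\, s}\bigr)$ match the index formula relating $\Pic(A[\sqrt{mD}])$ to $\Pic(A[\sqrt{m_0D}])$. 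So there is nothing asymptotic to argue at that step, and your fallback claim that the conductor corrections ``contribute only to lower-order terms that vanish after dividing by $q^{3l}$'' would be false if taken literally: those corrections are bounded multiplicative factors (not error terms tending to zero), and discarding them would change the answer. Relatedly, the exponent $\delta+l$ should be identified as $(\deg(mD)-1)/2$ coming from $|mD|^{1/2}$, not as the genus of $K(\sqrt{m_0D})$, which varies with $m$ when $m$ is not square-free. Your primary identification --- that the full Dirichlet $L$-value, rather than the curve $L$-value, is the right object --- is the correct resolution and makes the rest of your argument go through verbatim.
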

\begin{proof} For $\deg m= \delta +2l+1$  we have the relation (see e.g. \cite[Theorem 0.6 (i)]{Hoffstein:1992ao})
\[
\begin{aligned}
L(1,\chi_{m D })=& \frac{q^{1/2}}{|mD|^{1/2}} h(mD )\\
&=q^{-l-\delta}  h(mD ).
\end{aligned}
\]
Thus
\[
\begin{aligned}
\lim_{l\to \infty}\frac{1}{q^{3l}} \sum_{\deg m=\delta+2l+1}^\bullet h(mD )&= q^{2\delta+1} \lim_{l\to\infty} \frac{1}{q^{\delta+2l+1}}\sum_{\deg m=\delta+2l+1}^\bullet L(1,\chi_{D  m}) \\
&=q^{2\delta} (q^2-1)\frac{M_D(1)M_D(2)}{M_D(3)}.
\end{aligned}
\]
\end{proof}
\section{The mass formula}

We can use the results established in the previous sections to obtain Siegel's Mass Formula 
in the case where the determinant $D$ is square-free.

\begin{theorem}\label{T:mass}
Let $D$ be a square-free polynomial of degree $\delta$ and let
$L_1,\ldots,L_h$ be a complete list of representatives of the isometry
classes of ternary definite lattices of determinant $D$. Then

\begin{equation}\label{E:mass}  
\sum_{i=1}^h \frac{1}{|SO(L_i)|}= \frac{q^{\delta} M_D(1) M_{D_0}(2)}{2^r(q^2-1)(2M_{D_0}(1) -M_{D_0}(2))},
\end{equation}
where $r$ is the number of prime divisors of $D$.

\end{theorem}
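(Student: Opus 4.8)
The plan is to sum the Siegel identity of Theorem~\ref{T:siegel} over all polynomials $a$ of a fixed degree $\delta+2m+1$ that are prime to $D$, and then let $m\to\infty$. This degree is chosen precisely so that $\deg(aD)=2\delta+2m+1$ is odd; consequently $-aD$ can never be a square in $K_\infty$, and the hypothesis $-aD\notin{K_\infty^\times}^2$ of Theorem~\ref{T:siegel} is automatically satisfied for every $a$ in the range of summation. Summing \eqref{E:gauss2} over such $a$ and interchanging the finite sum over $i$ with the sum over $a$ gives
\[
\sum_{i=1}^h \frac{1}{|SO(L_i)|}\sum_{\substack{\deg a=\delta+2m+1\\(a,D)=1}} R(L_i,a) = 2^{-r}\sum_{\substack{\deg a=\delta+2m+1\\(a,D)=1}} |\Pic(A[\sqrt{-aD}])|.
\]

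The next step is to recognize both sides in terms of quantities already computed. Grouping the primitive vectors of $L_i$ by the value $a=Q(\b{x})$ shows that the inner sum on the left is exactly $\beta_{\delta+2m+1}(L_i)$, the primitive representation count of Theorem~\ref{T:coeff-beta}. Dividing by $q^{3m}$ and letting $m\to\infty$, that theorem gives
\[
\lim_{m\to\infty}\frac{\beta_{\delta+2m+1}(L_i)}{q^{3m}}=\frac{2M_{D_0}(1)M_{D_1}(2)-M_D(2)}{M_D(3)\zeta(3)}(1-q^{-2})q^{\delta+4}=:\ell,
\]
a value independent of the representative $L_i$. For the right-hand side, $a\mapsto -a$ is a bijection on polynomials of fixed degree prime to $D$ and turns $-aD$ into $mD$ in the notation of Corollary~\ref{C:classno_avg}; hence that corollary evaluates the limit as $q^{2\delta}(q^2-1)M_D(1)M_D(2)/M_D(3)$. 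Passing to the limit on both sides (the left-hand limit commutes with the finite sum over $i$) yields
\[
\ell\sum_{i=1}^h\frac{1}{|SO(L_i)|}=2^{-r}q^{2\delta}(q^2-1)\frac{M_D(1)M_D(2)}{M_D(3)}.
\]

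It remains to solve for the mass and simplify. Substituting the value of $\ell$, the factor $M_D(3)$ cancels, and using $\zeta(3)=(1-q^{-2})^{-1}$ together with $(q^2-1)/(1-q^{-2})^2=q^4/(q^2-1)$ collapses the powers of $q$ to a single $q^\delta$, giving
\[
\sum_{i=1}^h\frac{1}{|SO(L_i)|}=\frac{q^\delta M_D(1)M_D(2)}{2^r(q^2-1)(2M_{D_0}(1)M_{D_1}(2)-M_D(2))}.
\]
Finally, the multiplicativity $M_D=M_{D_0}M_{D_1}$ lets me write $M_D(2)=M_{D_0}(2)M_{D_1}(2)$ and factor $M_{D_1}(2)$ out of the last factor in the denominator, $2M_{D_0}(1)M_{D_1}(2)-M_{D_0}(2)M_{D_1}(2)=M_{D_1}(2)\bigl(2M_{D_0}(1)-M_{D_0}(2)\bigr)$, which cancels against the $M_{D_1}(2)$ now appearing in the numerator and produces the stated formula \eqref{E:mass}.

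The only genuinely delicate points are the identification of the left inner sum with $\beta_{\delta+2m+1}(L_i)$ and the verification that the odd-degree choice legitimizes Theorem~\ref{T:siegel} uniformly in $a$; everything else is the bookkeeping of assembling the two earlier limits and the elementary algebra above, where the main pitfall is simply keeping the splitting $D=D_0D_1$ straight so that the correct $M$-factors cancel.
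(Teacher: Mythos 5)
Your proposal is correct and follows essentially the same route as the paper's own proof: sum the identity \eqref{E:gauss2} over all $a$ prime to $D$ of degree $\delta+2m+1$ (so that $-aD$ has odd degree and hence is a non-square in $K_\infty$), identify the inner sum with $\beta_{\delta+2m+1}(L_i)$, and take $m\to\infty$ using Theorem~\ref{T:coeff-beta} on the left and Corollary~\ref{C:classno_avg} on the right. The only difference is one of exposition: you make explicit the sign bijection $a\mapsto -a$ needed to invoke Corollary~\ref{C:classno_avg} and carry out the final algebraic simplification (including the factorization $M_D=M_{D_0}M_{D_1}$), steps the paper compresses into ``the formula follows by taking the limit.''
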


\begin{proof} 
We take sums in \eqref{E:gauss2} over all polynomials $a$ relatively prime to $D$ 
with $\deg a=\delta+2 l+1$ for a fixed positive
integer $l$ (we restrict ourselves to degrees of this form to ensure that $-a D$ is
not a square in $K_\infty$ so the equivalent conditions of Lemma \ref{L:eqconditions} are satisfied).  We get
\[
\sum_{\deg a=\delta+2 l+1}^\bullet \sum_{i=1}^h \frac{R(L_i,a) }{|SO(L_i)|}=
\frac{1}{2^r} \sum_{\deg a=\delta+2 l+1}^\bullet h(-a D)
\]
Interchanging the sums and using the notation of Theorem \ref{T:coeff-beta} we have 
\[
\frac{1}{q^{3 l}} \sum_{i=1}^h \frac{\beta_{\delta+2 l+1}(L_i)}{|SO(L_i)|}= \frac{1}{2^r} \frac{1}{q^{3 l}}
\sum_{\deg a=\delta+2 l+1}^\bullet h(-a D).
\]
The formula follows from Theorem \ref{T:coeff-beta} and Corollary \ref{C:classno_avg} by taking the limit
as $l\to \infty$.
\end{proof}
\begin{remark} In the special case where $D$ is {\em irreducible}, formula \eqref{E:mass} becomes
\begin{equation}\label{E:mass2}
\sum_{i=1}^h \frac{1}{|SO(L_i)|}=\frac{q^\delta-1}{2(q^2-1)},
\end{equation}
which is equivalent to the one proved by Gekeler \cite[5.11] {Gekeler:1992fk} using
Drinfeld modules.
\end{remark}

\section{Exact class numbers}

Theorem \ref{T:mass} allows us also to give {\em exact} class
numbers for definite forms of {\em irreducible} determinant $D$. We shall
restrict ourselves for simplicity to the case where the degree of $D$ is odd (a similar argument can be made
in the case where $D$ has even degree). 

Assuming $D$ is irreducible of odd degree,  all the definite ternary forms of determinant
$D$ are in the genus of the diagonal form $Q_0=\langle1,-\epsilon, -\epsilon D\rangle$,
where $\epsilon$ is a nonsquare in $\F_q$. Since $D$ is irreducible, a quadratic
form $Q$ of determinant $D$ either represents a unit or is indecomposable. Let
$h_\mathrm{dec}$ be the number of classes of decomposable forms of determinant $D$
and let $h_\mathrm{ind}$ be the number of indecomposable ones (then
$h=h_\mathrm{dec}+h_\mathrm{ind}$). If $Q$ is decomposable 
then $Q$ is of one of the types:
\begin{equation}\label{E:types}
Q=\langle1\rangle\perp (-\epsilon Q')\quad \mathrm{or}\quad  Q=\langle\epsilon\rangle\perp (-Q'')
\end{equation}
where $Q'$ is a binary form in the principal genus of determinant $D$
and $Q''$ is the in the principal genus of determinant $\epsilon D$. The only
class common to these two types is represented by the diagonal form $Q_0$. 

Kneser \cite{Kneser:1982lb} showed that the theory of Gauss for binary
quadratic forms holds in great generality, and we can apply it in
particular for forms over $A$ (see also \cite{Hellegouarch:1989rf} for
the specific case of polynomial rings) in order to count the
possibilities for $Q'$ and $Q''$. The proper equivalence classes of
binary quadratic forms of determinant $D$ in the principal genus are in
one-to-one correspondence with elements of the ideal class group
$\Pic(A[\sqrt{-D}])$ (this group has odd order since $D$ is irreducible of odd degree).
The number of improper nontrivial equivalence classes in the principal genus is then
$(|\Pic(A[\sqrt{-D}])|-1)/2$. Similarly for the forms of determinant
$\epsilon D$. Thus:

\begin{equation}\label{E:dec}
\begin{aligned}
h_\mathrm{dec}&= 1+
\frac{|\Pic(A[\sqrt{-D}])|-1}{2}+\frac{|\Pic(A[\sqrt{-\epsilon
D}])|-1}{2}\\ & =\frac{L_{-D}(1)+L_{-D}(-1)}{2},
\end{aligned}
\end{equation}
where $L_{-D}$ is the numerator of the zeta function (see \cite[Theorem 5.9]{Rosen:2002kv}) of the
hyperelliptic curve $y^2=-D$. Here we use the well-known fact that
$L_{-\epsilon D}(u)=L_{-D}(-u)$. Note that in the particular case of an
elliptic curve ($\deg D=3$) we have $h=h_\mathrm{dec}=q+1$.

\begin{lemma}\label{L:orderSO} Denote by $SO(Q)$ the group of integral rotations of $Q$.
Then
\begin{enumerate}
\item If $Q=Q_0$ then $|SO(Q)|=2(q+1)$.
\item If $Q$ is as in \eqref{E:types} with $Q'$ and $Q''$ indecomposable, then $|SO(Q)|=2$.
\item If $Q$ is indecomposable, then $|SO(Q)|=1$.
\end{enumerate}
\end{lemma}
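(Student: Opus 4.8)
The plan is to reduce everything to the subspace of vectors of minimal norm. For an integral lattice $L\subset V$ set $L_0=\{\b x\in L\: :\: \deg Q(\b x)\le 0\}$, exactly the set introduced in Section 6. Because $Q$ is definite, the ultrametric Cauchy--Schwarz bound $\deg B(\b x,\b y)\le \tfrac12(\deg Q(\b x)+\deg Q(\b y))$ holds over $K_\infty$, so $L_0$ is an $\F_q$-subspace on which $Q$ takes values in $\F_q$ and is anisotropic; its dimension is the number of vanishing successive minima of $L$ (compare Proposition \ref{P:zcoeff}). Any $\sigma\in O(L)$ preserves $\deg Q$, hence preserves $L_0$ and induces $\sigma|_{L_0}\in O(L_0)$. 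First I would compute $\dim_{\F_q}L_0$ in each case: it equals $2$ for $Q_0=\langle 1,-\epsilon,-\epsilon D\rangle$ (here $\b e_1,\b e_2$ have unit norm while $\deg Q(\b e_3)=\delta>0$), it equals $1$ in case (ii) (the $\langle 1\rangle$ or $\langle\epsilon\rangle$ summand, the indecomposable binary part representing no unit since an indecomposable lattice cannot split off a unit line), and it equals $0$ in case (iii).

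For cases (i) and (ii) the relevant splitting is orthogonal and defined over $A$, so $L=(A\cdot L_0)\perp M_c$ with $M_c=L_0^\perp\cap L$. Since the Gram matrix of $L_0$ has constant entries, an $A$-isometry of $A\cdot L_0$ must itself have constant coefficients, whence $O(A\cdot L_0)=O(L_0)$ and $O(L)=O(L_0)\times O(M_c)$. Then $SO(L)$ is the kernel of the product-of-determinants map, of index $2$ as soon as one factor contains an improper isometry (which $O(L_0)$ always does when $\dim L_0\ge 1$). In case (i), $(L_0,Q)\cong\langle 1,-\epsilon\rangle$ is the norm form of $\F_{q^2}$, whose orthogonal group has order $2(q+1)$, and $M_c$ has rank $1$ with $|O(M_c)|=2$; this yields $|SO(Q_0)|=\tfrac12\cdot 2(q+1)\cdot 2=2(q+1)$. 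In case (ii) one has $|O(L_0)|=2$, and it remains to compute $|O(M_c)|$ for the indecomposable binary lattice $M_c$, giving $|SO(Q)|=\tfrac12\cdot 2\cdot |O(M_c)|$.

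The one genuinely arithmetic point, and the step I expect to be the main obstacle, is to show $|O(M_c)|=2$ in case (ii), i.e. that the indecomposable binary lattice has no improper isometry. Its proper isometries are $\{\pm 1\}$ (as in the proof of Lemma \ref{L:freeaction}), and an improper isometry exists precisely when $M_c$ is ambiguous, that is when its class in $\Pic(A[\sqrt{-D}])$ (respectively $\Pic(A[\sqrt{-\epsilon D}])$) equals its own inverse, hence is $2$-torsion. Since $D$ is irreducible of odd degree this Picard group has odd order, so the only ambiguous class is the principal one, which corresponds to the decomposable form $Q_0$ already handled. Thus a genuinely indecomposable $M_c$ is non-ambiguous, $|O(M_c)|=2$, and $|SO(Q)|=2$.

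Finally, for case (iii) I would argue directly that $SO(Q)=\{1\}$. An indecomposable $L$ represents no unit (a primitive vector of unit norm prime to $D$ would split off an orthogonal line), so $L_0=0$ and every nonzero vector has positive-degree norm. Let $\sigma\in SO(L)$ be nontrivial. Since $\det\sigma=1$ in odd dimension, its eigenvalues are $\{1,\zeta,\zeta^{-1}\}$, so $\sigma$ fixes a line; choose a primitive $\b v\in L$ with $\sigma\b v=\b v$. Then $M=L\cap\b v^\perp$ is a binary definite lattice with $\sigma|_M\in SO(M)$, and as $M$ represents no unit we have $SO(M)=\{\pm 1\}$; since $\sigma|_M=1$ would force $\sigma=1$, we get $\sigma|_M=-1$. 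Hence $\sigma$ is an involution, and the idempotents $e=(1+\sigma)/2$, $f=(1-\sigma)/2$ produce an orthogonal splitting $L=A\b v\perp M$, contradicting indecomposability. Therefore no nontrivial $\sigma$ exists.
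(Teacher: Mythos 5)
The paper itself offers no argument to compare against (it leaves this lemma as an ``easy exercise''), so your proof must stand on its own. Parts (i) and (ii) do: the unit-norm set $L_0$ is an $\F_q$-subspace preserved by every isometry, $A\cdot L_0$ is unimodular and therefore splits off orthogonally, the identification $O(A\cdot L_0)=O(L_0)$ is correct, and your key arithmetic step in (ii) --- an indecomposable binary lattice admits no improper automorphism because an improper automorphism forces the class to be $2$-torsion in $\Pic(A[\sqrt{-D}])$, a group of odd order --- is exactly the mechanism the paper itself invokes when counting $h_{\mathrm{dec}}$ in \eqref{E:dec}. Those two cases are fine.

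Case (iii), however, contains a genuine gap: the implication ``$M$ represents no unit $\Rightarrow SO(M)=\{\pm 1\}$'' is false for definite binary lattices. Take $M=A^2$ with form $t(x^2-\epsilon y^2)$: it is definite, every nonzero value has odd degree $\geq 1$ so no unit is represented, yet $SO(M)$ contains the $q+1$ rotations $(x,y)\mapsto (ax+\epsilon by,\, bx+ay)$ with $a,b\in\F_q$, $a^2-\epsilon b^2=1$. The correct criterion (the one implicitly used in Lemma \ref{L:freeaction}) is that the \emph{primitive part} of $Q|_M$ have non-constant determinant; ``represents no unit'' does not exclude $Q|_M\cong c\langle 1,-\epsilon\rangle$ with $\deg c\geq 1$. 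In Lemma \ref{L:freeaction} this shape is impossible because there $Q(\b v)=a$ is prime to $D$, so $Q(\b v)\det M=e^2D$ forces $\ord_D(\det M)$ to be odd; in your situation $\b v$ is whatever fixed vector $\sigma$ hands you, $Q(\b v)$ may be divisible by $D$, and nothing you have said rules out that $M=L\cap \b v^{\perp}$ is a non-constant multiple of a unimodular lattice, in which case your dichotomy $\sigma|_M=\pm 1$ collapses ($\sigma|_M$ could have order dividing $q+1$). The statement is still true, and the repair is short: every $\sigma\in SO(L)$ has finite order $n$ prime to $p$ (an isometry of order $p$ is unipotent, and a unipotent isometry $\neq 1$ of an anisotropic space produces an isotropic vector), so if $\sigma\neq 1$ you may average, $\pi=\tfrac{1}{n}\sum_{i=0}^{n-1}\sigma^{i}\in \mathrm{End}_A(L)$. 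This $\pi$ is a self-adjoint idempotent, its image contains the nonzero fixed vector $\b v$, its kernel is nonzero since $\sigma\neq 1$, and $L=\pi L\perp (1-\pi)L$ contradicts indecomposability. This subsumes your involution/idempotent trick (the case $n=2$) and needs no analysis of $SO(M)$ at all.
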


\begin{proof} We leave the proof as an easy exercise.
\end{proof}

\begin{theorem}\label{T:exactclassno} Let $D$ be a square-free irreducible polynomial of odd degree $\delta$
and let $h$ be the number of isometry classes of definite
ternary forms of determinant $D$ and let $h_\mathrm{ind}$ the number of those
that are indecomposable. Then

\begin{align}
h=&\frac{1}{2}\left[1+ \frac{q(q^{\delta-1}-1)}{q^2-1}+\frac{L_{-D}(1)+L_{-D}(-1)}{2}\right];
\label{E:exactclassno}\\ h_\mathrm{ind}
= &\frac{1}{2}\left[1+ \frac{q(q^{\delta-1}-1)}{q^2-1}-\frac{L_{-D}(1)+L_{-D}(-1)}{2}\right].
\end{align}

\end{theorem}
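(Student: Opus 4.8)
The plan is to read off both formulas directly from the mass formula \eqref{E:mass2} by partitioning the $h$ isometry classes according to the decomposability types recorded in Lemma \ref{L:orderSO}, and then solving a small linear system whose other input is the count $h_\mathrm{dec}$ from \eqref{E:dec}.

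First I would organize the $h$ classes into three groups. By the discussion preceding \eqref{E:dec}, the decomposable forms number $h_\mathrm{dec}$, and among them the diagonal form $Q_0$ is the unique one whose binary part is itself decomposable: since $D$ is irreducible, the principal binary form $\langle 1, D\rangle$ (respectively $\langle \epsilon, \epsilon D\rangle$) is the only decomposable class in each principal genus. Thus the classes split as the single class $Q_0$; the remaining $h_\mathrm{dec}-1$ decomposable classes, each of type \eqref{E:types} with indecomposable binary part; and the $h_\mathrm{ind}$ indecomposable classes. Applying Lemma \ref{L:orderSO}, the orders of $SO$ on these three groups are $2(q+1)$, $2$, and $1$ respectively.

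Next I would evaluate the mass by summing $1/|SO|$ over this partition, giving
\[
\sum_{i=1}^h \frac{1}{|SO(L_i)|} = \frac{1}{2(q+1)} + \frac{h_\mathrm{dec}-1}{2} + h_\mathrm{ind}.
\]
Equating this with the right-hand side of \eqref{E:mass2} yields one linear relation between $h_\mathrm{dec}$ and $h_\mathrm{ind}$; together with the identity $h=h_\mathrm{dec}+h_\mathrm{ind}$ this is a $2\times 2$ system. Solving it and using the elementary simplification
\[
\frac{q^{\delta}-1}{2(q^2-1)} - \frac{1}{2(q+1)} = \frac{q(q^{\delta-1}-1)}{2(q^2-1)}
\]
gives $h = \tfrac12\left[1 + \tfrac{q(q^{\delta-1}-1)}{q^2-1} + h_\mathrm{dec}\right]$ and then $h_\mathrm{ind}=h-h_\mathrm{dec}=\tfrac12\left[1 + \tfrac{q(q^{\delta-1}-1)}{q^2-1} - h_\mathrm{dec}\right]$; substituting the value $h_\mathrm{dec}=(L_{-D}(1)+L_{-D}(-1))/2$ from \eqref{E:dec} produces exactly the two asserted formulas.

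The computations here are routine, so the only real subtlety is the bookkeeping in the first step: verifying that $Q_0$ is counted exactly once among the decomposable forms and is the sole class carrying the large automorphism group of order $2(q+1)$, so that precisely $h_\mathrm{dec}-1$ decomposable classes contribute $1/2$ to the mass. This hinges on the fact that the principal binary form is the unique decomposable class in the principal genus (using irreducibility of $D$), which I would make explicit before invoking Lemma \ref{L:orderSO}.
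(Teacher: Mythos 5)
Your proof is correct and is essentially the same as the paper's: the paper's own proof is exactly the (terse) statement that the result follows from Theorem \ref{T:mass} (specialized to \eqref{E:mass2}), the count \eqref{E:dec}, and Lemma \ref{L:orderSO}, which is precisely the computation you carry out. The bookkeeping point you flag --- that $Q_0$ is the unique decomposable class whose binary complement is decomposable, so exactly $h_\mathrm{dec}-1$ classes contribute $1/2$ to the mass --- is the only step the paper leaves implicit, and your justification of it via irreducibility of $D$ is sound.
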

\begin{proof} This is a straightforward application of Theorem \ref{T:mass} together with 
\eqref{E:dec} and Lemma \ref{L:orderSO}
\end{proof}

\begin{remark} In the case there $D$ is irreducible, the form $Q$ is anisotropic exactly at $D$ 
and at $\infty$, so, by Corollary \ref{C:maxorders}, the maximal lattices in $V$ are in one-to-one-correspondence
with the maximal orders of the quaternion algebra $C_0(V,Q)$. So the number $h$ of Theorem \ref{T:exactclassno} is also
the {\em type} of the quaternion algebra $C_0(V,Q)$, i.e. the number of conjugacy classes of maximal orders
of $C_0(V,Q)$. Formula \eqref{E:exactclassno} is therefore equivalent to the formula proved by Gekeler \cite[Theorem 7.6]{Gekeler:1992fk}
using Drinfeld modules  for the type of a quaternion algebra ramified at $D$ and $\infty$.
\end{remark}

\end{document}